\newtheorem{theorem}{Theorem}[section]
\newtheorem{lemma}[theorem]{Lemma}
\newtheorem{proposition}[theorem]{Proposition}
\theoremstyle{definition}
\newtheorem{example}{Example}[section]
\newtheorem{remark}{Remark}[section]
\newcommand{\1}{\mathbbm{1}}
\newcommand{\mub}{\boldsymbol{\mu}}
\newcommand{\E}{\mathbb{E}}
\newcommand{\N}{\mathbb{N}}
\newcommand{\R}{\mathbb{R}}
\newcommand{\Mcal}{\mathcal{M}}
\newcommand{\Fcal}{\mathcal{F}}
\newcommand{\Gcal}{\mathcal{G}}
\newcommand{\Hcal}{\mathcal{H}}
\newcommand{\Pcal}{\mathcal{P}}
\newcommand{\Scal}{\mathcal{S}}
\newcommand{\Xcal}{\mathcal{X}}
\newcommand{\Ycal}{\mathcal{Y}}
\newcommand{\FR}{\mathrm{FR}}
\newcommand{\KL}{\mathrm{KL}}
\newcommand{\Hell}{\mathrm{H}}
\newcommand{\xv}{\pmb{x}}
\newcommand{\yv}{\pmb{y}}
\renewcommand{\tilde}{\widetilde}
\DeclareMathOperator{\arccosh}{arccosh}
\DeclareMathOperator{\arctanh}{arctanh}
\DeclareMathOperator{\supp}{supp}
\DeclareMathOperator{\tr}{tr}
\DeclareMathOperator{\vect}{vec}
\renewcommand{\d}{\mathrm{d}}
\renewcommand{\t}{\mathsf{T}}
\newcommand{\suchthat}{\;\middle\vert\;}
\numberwithin{equation}{section}
\let\@fnsymbol\@arabic
\newcommand*\samethanks[1][\value{footnote}]{\footnotemark[#1]}
\title{\textbf{On Closed-Form Expressions for the Fisher--Rao Distance}}
\author{
    Henrique~K.~Miyamoto\thanks{
		Université Paris-Saclay, CentraleSupélec, CNRS, France. E-mail: \texttt{henrique.miyamoto@centralesupelec.fr}.
	}\hspace{1em}
    Fábio~C.~C.~Meneghetti\thanks{
		IMECC, Unicamp, Brazil. E-mail: \texttt{sueli@unicamp.br}.
	}\hspace{1em}
    Julianna~Pinele\thanks{
        CETEC, UFRB, Brazil. E-mail: \texttt{julianna.pinele@ufrb.edu.br}
    }\hspace{1em}
	Sueli~I.~R.~Costa\samethanks[2]
}
\date{\vspace{-1.5em}}
\begin{document}

\maketitle

\begin{abstract}
	The Fisher--Rao distance is the geodesic distance between probability distributions in a statistical manifold equipped with the Fisher metric, which is a natural choice of Riemannian metric on such manifolds. It has recently been applied to supervised and unsupervised problems in machine learning, in various contexts. Finding closed-form expressions for the Fisher--Rao distance is generally a non-trivial task, and those are only available for a few families of probability distributions. In this survey, we collect examples of closed-form expressions for the Fisher--Rao distance of both discrete and continuous distributions, aiming to present them in a unified and accessible language. In doing so, we also: illustrate the relation between negative multinomial distributions and the hyperbolic model, include a few new examples, and write a few more in the standard form of elliptical distributions.
\end{abstract}

\tableofcontents

\section{Introduction}

Information geometry~\cite{amari2000,calin2014,nielsen2020} uses the tools of differential geometry to study spaces of probability distributions by regarding them as differential manifolds, called \emph{statistical manifolds}. When these distributions are parametric, a structure of interest is the \emph{Fisher metric}, a Riemannian metric induced by the Fisher information matrix. This is essentially the unique Riemannian metric on statistical manifolds that is invariant by sufficient statistics~\cite{ay2015,le2017}, making it a natural choice to study the geometry of these manifolds. Moreover, this structure allows one to define the \emph{Fisher--Rao distance} between two probability distributions on the same statistical manifold as the geodesic distance between them, i.e., the length of the minimising path, according to the Fisher metric.

The idea of considering the geodesic distance in Riemannian manifolds equipped with the Fisher metric was first suggested by Hotelling in 1930~\cite{hotelling1930} (reprinted in \cite{stigler2007}), and later in 1945 in a landmark paper by Rao~\cite{rao1945} (reprinted in \cite{rao1992}). This has influenced many authors to study the Fisher--Rao distance in different families of probability distributions in the following years~\cite{atkinson1981,ayadi2023,burbea1986,calvo1991,li2022,micchelli2005,minarro1993,mithcell1988,oller1987,oller1985,pinele2020,rao1987,verdoolaege2012,villarroya1993}. It is important to note that, contrary to commonly used divergence measures, such as the Kullback-Leibler divergence, the Fisher--Rao distance is a proper distance, i.e., it is symmetric and the triangle inequality holds---properties that could be required when comparing two distributions, depending on the application.

More recently, the Fisher--Rao distance has gained attention especially in applications to machine learning problems. In the context of unsupervised learning, it has been used for clustering different types of data: shape clustering applied to morphometry~\cite{gattone2017}, clustering of financial returns~\cite{taylor2019}, image segmentation~\cite{pinele2020} and identification of diseases from medical data~\cite{lebrigant2021-beta,rebbah2019}. When it comes to supervised learning, it has been used to analyse the geometry in the latent space of generative models~\cite{arvanitidis2022}, to enhance robustness against adversarial attacks~\cite{picot2023,shi-garrier2024}, in detection of out-of-distribution samples~\cite{gomes2022}, as a loss function for learning under label noise~\cite{miyamoto2023}, and to classify EEG signals of brain-computer interfaces~\cite{bouchard2023}.

However, finding closed-form expressions for the Fisher--Rao distance of arbitrary distributions is not a trivial task---as a matter of fact, more generally, finding geodesics in an arbitrary manifold is a difficult problem in differential geometry. Having that in mind, in this work, we collect examples of statistical models for which closed-form expressions for the Fisher--Rao distance are available. Most of them have been published over the last decades, in different places, and we aim to present these results in a unified and accessible language, hoping to bring them to a broader audience. In curating this collection, we also add a few contributions, such as: illustrating the relation between the manifold of negative multinomial distributions and the hyperbolic model; including a few new examples (Rayleigh, Erlang, Laplace, generalised Gaussian, power function, inverse Wishart), to the best of the authors' knowledge; and writing more examples in the standard form of univariate elliptical distributions (Laplace, generalised Gaussian, logistic). Finally, we note that numerical methods have been proposed to compute the Fisher--Rao distance when no closed-form expression is available, as in~\cite{arwini2008,han2014,lebrigant2022,lebrigant2021-beta,nielsen2023,rebbah2019,reverter2003}, but those techniques are beyond the scope of the present work.

We review preliminaries of information geometry in Section~\ref{sec:preliminaries} and of hyperbolic geometry in Section~\ref{sec:hyperbolic-geometry}. We then collect closed-form expressions for discrete distributions in Section~\ref{sec:discrete-distributions}, and for continuous distributions in Section~\ref{sec:continuous-distributions}. Product distributions are discussed in Section~\ref{sec:product-distributions}, and Section~\ref{sec:conclusion} concludes the paper.

\paragraph{Notation.} We denote the sets $\N = \{ 0, 1, 2, \dots \}$, $\N^* = \{ 1, 2, 3, \dots \}$, $\R_+ = \left[ 0, \infty \right[$, and $\R_+^* = \left] 0, \infty \right[$. $\1_A(x)$ is the indicator function, that takes value $1$ if $x \in A$, and $0$ otherwise. $\delta_{ij} \coloneqq \1_{\{j\}}(i)$ denotes the Kronecker delta. We denote $\dot{x}(t) \coloneqq \frac{\d}{\d t}x(t)$. $P_n(\R)$ denotes the cone of $n \times n$ real symmetric definite-positive matrices.

Let $(\Omega, \Gcal, P)$ be a probability space and $X \colon \Omega \to \Xcal$ a random variable in the $\sigma$-finite measure space $(\Xcal, \Fcal, \mu)$. The push-forward measure of $P$ by $X$ is given by $X_*P(E) \coloneqq P(X^{-1}(E))$, for $E \in \Fcal$, and we assume that $X_*P$ is absolutely continuous with respect to $\mu$. The Radon-Nikodym derivative $p \coloneqq \frac{\d X_*P}{\d \mu} \colon \Xcal \to \R$ can be seen as the probability mass or density function (p.m.f. or p.d.f.), respectively, in the cases that $\Xcal$ is discrete or continuous. When $\Xcal$ is discrete, we take $\mu$ as the counting measure, and the integral with respect to $\mu$ becomes a summation; when $\Xcal = \R^n$, we take $\mu$ as the Lebesgue measure.

A \emph{statistical model}~\cite{amari2000} 
\begin{equation} \label{eq:statistical-model}
	\Scal \coloneqq \left\{ p_{\xi} = p(x; \xi) \suchthat \xi = (\xi^1, \dots, \xi^n) \in \Xi \subseteq \R^n \right\}
\end{equation}
is a family of probability distributions $p_\xi$ parametrised by $n$-dimensional vectors $\xi = \left( \xi^1, \dots, \xi^n \right)$ such that the mapping $\xi \mapsto p_\xi$ is injective and $\Xi$ is an open set of $\R^n$. We consider statistical models in which the support of $p_\xi$ does not depend on $\xi$ and we take $\Xcal = \supp p_\xi$, unless otherwise stated. Note that $\Scal$ is contained in the infinite-dimensional space $\Pcal(\Xcal) \coloneqq \left\{ p \in L^1(\mu) \suchthat p > 0, \ \int_{\Xcal} p~\d\mu =1 \right\}$ of positive, $\mu$-integrable functions of unit total measure.

\begin{figure}
	\centering
	\includegraphics[width=0.45\linewidth]{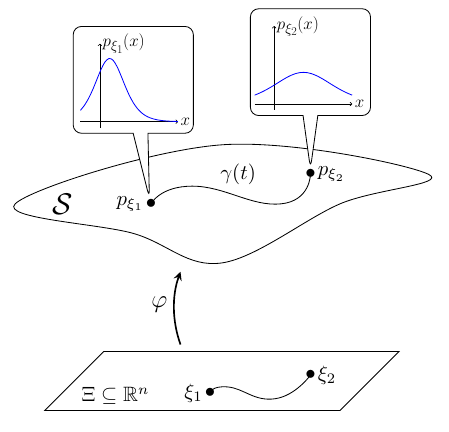}
	\caption{Schematic representing the parametrisation~$\varphi$ from the parameter space~$\Xi$ to the statistical manifold~$\Scal$. The curve~$\gamma(t)$ joins two points in the manifold, which are probability density (or mass) functions.}
	\label{fig:manifold}
\end{figure}

To introduce a differentiable structure in $\Scal$, we consider the following assumptions: 1)~the parametrisation $\varphi \colon \Xi \to \Pcal(\Xcal),\ \varphi(\xi) = p_\xi$ is a homeomorphism on its image; 2)~denoting $\partial_i \coloneqq \frac{\partial}{\partial \xi^i}$, the functions $\left\{ \partial_1 p_\xi, \dots, \partial_n p_\xi \right\}$ are linearly independent; 3)~the mapping $\xi \mapsto p_\xi(x)$ is smooth, for all $x \in \Xcal$; 4)~the partial derivatives $\partial_i p_\xi(x)$ commute with the integrals. Moreover, by considering diffeomorphic parametrisations as equivalent, $\Scal$ becomes a differentiable manifold, that we may call a \emph{statistical manifold}\footnote{
	We follow the nomenclature from~\cite{amari2000}, but remark that, more generally, \emph{statistical manifold} refers to a manifold equipped with a Riemannian metric and a $3$-symmetric tensor, from a purely geometric point of view~\cite{lauritzen1987} (see also~\cite[\S~4.5]{ay2017}). We will restrict ourselves to the case of parametric statistical models presented above. 
}. Note that the parametrisation $\xi \mapsto p_\xi$ is a global coordinate system for this manifold (see Figure~\ref{fig:manifold}).

We can further equip the statistical manifold $\Scal$ with a Riemannian metric. Denoting $\ell(\xi) \coloneqq \log p_\xi$ the log-likelihood function, the elements of the \emph{Fisher information matrix} (or simply \emph{Fisher matrix}) $G(\xi) = \left[ g_{ij}(\xi) \right]_{i,j}$ are defined as
\begin{equation} \label{eq:fisher-matrix-gij}
	g_{ij} \coloneqq g_{ij}(\xi) \coloneqq \E \left[ \partial_i \ell(\xi) \partial_j \ell(\xi) \right],
\end{equation}

\noindent for $1 \le i,j \le n$, where the expectation is taken with respect to $p_\xi$. Explicitly,
\begin{equation*}
	g_{ij}(\xi) = \int_\Xcal p_\xi \left(\frac{\partial}{\partial \xi^i}\log p_\xi\right) \left(\frac{\partial}{\partial \xi^j}\log p_\xi\right)\d\mu.
\end{equation*}

Alternatively, the Fisher matrix can be written as the negative expectation of the Hessian of the log-likelihood function, a result that can make the computation of the Fisher matrix easier in some cases:

\begin{proposition}[{\cite[Prop.~1.6.3]{calin2014}}] \label{prop:alternative-form}
	The elements of the Fisher matrix can be expressed as
	\begin{equation}\label{eq:fisher-matrix-hessian}
		g_{ij}(\xi) = -\E \left[ \partial_j \partial_i \ell(\xi) \right].
	\end{equation}
\end{proposition}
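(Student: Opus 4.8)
The plan is to start from the normalisation constraint that every density integrates to one, namely $\int_\Xcal p_\xi \, \d\mu = 1$, and to differentiate it twice with respect to the coordinates, each time converting derivatives of $p_\xi$ into derivatives of the log-likelihood $\ell(\xi) = \log p_\xi$ through the elementary identity $\partial_i p_\xi = p_\xi \, \partial_i \ell(\xi)$. Assumption~4 of the statistical model (that partial derivatives commute with the integral over $\Xcal$) is what licenses differentiating under the integral sign, so it will be invoked at each step.

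First I would differentiate the normalisation condition with respect to $\xi^i$ to obtain $\int_\Xcal \partial_i p_\xi \, \d\mu = 0$, and rewrite this using $\partial_i p_\xi = p_\xi \, \partial_i \ell(\xi)$ as $\int_\Xcal p_\xi \, \partial_i \ell(\xi) \, \d\mu = \E\left[ \partial_i \ell(\xi) \right] = 0$. This records the familiar fact that the score has zero mean under $p_\xi$, and it is the quantity I would differentiate a second time.

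Next I would differentiate $\int_\Xcal p_\xi \, \partial_i \ell(\xi) \, \d\mu = 0$ with respect to $\xi^j$, applying the product rule inside the integral:
\begin{equation*}
    \int_\Xcal (\partial_j p_\xi) \, \partial_i \ell(\xi) \, \d\mu + \int_\Xcal p_\xi \, \partial_j \partial_i \ell(\xi) \, \d\mu = 0.
\end{equation*}
Applying $\partial_j p_\xi = p_\xi \, \partial_j \ell(\xi)$ to the first integrand turns it into $\E\left[ \partial_i \ell(\xi) \, \partial_j \ell(\xi) \right] = g_{ij}(\xi)$ by the definition~\eqref{eq:fisher-matrix-gij}, while the second integral is exactly $\E\left[ \partial_j \partial_i \ell(\xi) \right]$. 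Rearranging the resulting identity $g_{ij}(\xi) + \E\left[ \partial_j \partial_i \ell(\xi) \right] = 0$ yields $g_{ij}(\xi) = -\E\left[ \partial_j \partial_i \ell(\xi) \right]$, which is~\eqref{eq:fisher-matrix-hessian}.

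The computation itself is routine once the regularity hypotheses are in place; the main obstacle is therefore analytic rather than algebraic, namely ensuring that the two differentiations can be carried out under the integral sign. This is precisely what the smoothness and interchange assumptions~3 and~4 on the model guarantee, so I would be careful to appeal to them explicitly at the two points where limits and integrals are swapped, rather than performing the interchanges silently.
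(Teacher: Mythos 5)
Your proof is correct and follows essentially the same route as the paper's: differentiate the normalisation $\int_\Xcal p_\xi\,\d\mu = 1$ once to obtain the zero-mean score identity, then differentiate again with the product rule and convert $\partial_j p_\xi = p_\xi\,\partial_j \ell(\xi)$ to arrive at $g_{ij}(\xi) + \E\left[\partial_j\partial_i \ell(\xi)\right] = 0$. Your explicit appeal to the interchange-of-derivative-and-integral assumption is a point the paper's proof uses only implicitly, but the argument is the same.
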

\begin{proof}
	As $\int_{\Xcal} p_\xi~\d\mu = 1$, taking the derivative yields $\int_{\Xcal}p_\xi \partial_i \log p_\xi~\d\mu = \int_{\Xcal} \partial_i p_\xi~\d\mu = 0$. By taking the derivative again, we have
	\begin{align*}
		0 = \partial_j \int_{\Xcal} p_\xi \partial_i \log p_\xi~\d\mu
		&= \int_{\Xcal} \partial_j p_\xi \partial_i \log p_\xi~\d\mu+ \int_{\Xcal} p_\xi \partial_j \partial_i \log p_\xi~\d\mu\\
		&= \int_{\Xcal} p_\xi (\partial_j \log p_\xi) (\partial_i \log p_\xi)~\d\mu+ \int_{\Xcal} p_\xi \partial_j \partial_i \log p_\xi~\d\mu \\
		&= \E \left[(\partial_j \log p_\xi) (\partial_i \log p_\xi)\right] + \E \left[\partial_j\partial_i \log p_\xi\right].
	\end{align*}
\end{proof}

Since the Fisher matrix is symmetric and positive-definite, it defines a Riemannian metric~$g_{p_{\xi}}$ (also denoted simply $g_{\xi}$), called the \emph{Fisher metric}; that is, a family of inner products $g_{\xi} \colon T_{p_\xi}\Scal \times T_{p_\xi}\Scal \to \R$ that vary smoothly on the statistical manifold. Applying the Fisher metric to two vectors $v_1 =\d\varphi_{p_\xi}(\xi_1)$ and $v_2 = \d\varphi_{p_\xi}(\xi_2)$ in the tangent space $T_{p_{\xi}}\Scal$ is equivalent to computing an inner product mediated by the Fisher matrix $G(\xi)$ between the respective local-coordinate vectors $\xi_1, \xi_2 \in \R^n$:
\begin{equation} \label{eq:inner-product}
	\langle v_1, v_2 \rangle_{G(\xi)}
	\coloneqq g_\xi \left( v_1, v_2 \right)
	\coloneqq g_{p_\xi}\left( v_1, v_2 \right)
	= \xi_1^\t \, G(\xi) \, \xi_2.
\end{equation}

The following results will help some derivations in the rest of the text. First, we note that, as any Riemannian metric, the Fisher metric is covariant under reparametrisation of the parameter space:

\begin{proposition}[{\cite[Thm.~1.6.5]{calin2014}}] \label{prop:reparametrisation-parameter-space}
	The Fisher matrix is covariant under reparametrisation of the parameters space, that is, given two coordinate systems $\xi = (\xi^1, \dots, \xi^n)$ and $\theta = (\theta^1, \dots, \theta^n)$, related by the bijection $\xi = \xi(\theta)$, the Fisher matrix transforms its coordinates as
	\begin{equation} \label{eq:covariant-parameters}
		\tilde{G}(\theta) = \left[ \frac{\d \xi}{\d \theta} \right]^\t G\left(\xi(\theta)\right) \left[ \frac{\d \xi}{\d \theta} \right],
	\end{equation}
	where $\left[ \frac{\d \xi}{\d \theta} \right]$ denotes the Jacobian matrix of the transformation $\theta \mapsto  \xi$.
\end{proposition}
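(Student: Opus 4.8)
The plan is to establish the transformation law \eqref{eq:covariant-parameters} directly from the outer-product definition \eqref{eq:fisher-matrix-gij} of the Fisher matrix entries, using nothing more than the chain rule together with the fact that the reparametrisation Jacobian is deterministic. Write $\ell(\xi) = \log p_\xi$ for the log-likelihood in the $\xi$-coordinates, and let $\tilde\ell(\theta) \coloneqq \ell(\xi(\theta))$ denote its expression after the substitution $\xi = \xi(\theta)$.

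First I would apply the chain rule to obtain
\[
\frac{\partial \tilde\ell}{\partial \theta^a} = \sum_{i=1}^n \frac{\partial \ell}{\partial \xi^i} \, \frac{\partial \xi^i}{\partial \theta^a},
\]
and substitute this into \eqref{eq:fisher-matrix-gij} written in the $\theta$-coordinates. The key step is then to observe that each Jacobian factor $\frac{\partial \xi^i}{\partial\theta^a}$ depends on $\theta$ alone and not on the sample point $x$, so it may be pulled out of the expectation; this leaves
\[
\tilde{g}_{ab}(\theta) = \sum_{i,j=1}^n \frac{\partial \xi^i}{\partial \theta^a} \, \E\!\left[ \frac{\partial \ell}{\partial \xi^i} \frac{\partial \ell}{\partial \xi^j} \right] \frac{\partial \xi^j}{\partial \theta^b} = \sum_{i,j=1}^n \frac{\partial \xi^i}{\partial \theta^a} \, g_{ij}(\xi(\theta)) \, \frac{\partial \xi^j}{\partial \theta^b}.
\]
To conclude, I would identify the right-hand side as the $(a,b)$-entry of $\left[\frac{\d\xi}{\d\theta}\right]^\t G(\xi(\theta)) \left[\frac{\d\xi}{\d\theta}\right]$, with the Jacobian matrix having entries $\big[\frac{\d\xi}{\d\theta}\big]_{ia} = \frac{\partial \xi^i}{\partial\theta^a}$, which is exactly \eqref{eq:covariant-parameters}.

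I do not anticipate any serious obstacle: the content is a single application of the chain rule followed by factoring deterministic quantities out of the expectation, which is precisely the transformation rule of a covariant $2$-tensor. The only delicate point is the index bookkeeping needed to confirm that the transposed and untransposed Jacobians land on the correct sides of $G$. As a sanity check, one could instead start from the Hessian form \eqref{eq:fisher-matrix-hessian}: there the second derivative produces an additional term proportional to $\frac{\partial^2 \xi^i}{\partial\theta^a \partial\theta^b}\,\E[\partial_i \ell]$, which vanishes since $\E[\partial_i \ell] = 0$ (as shown in the proof of Proposition~\ref{prop:alternative-form}), recovering the same identity.
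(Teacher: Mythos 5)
Your proposal is correct and follows essentially the same route as the paper's proof: a single application of the chain rule followed by pulling the deterministic Jacobian factors out of the expectation, yielding the covariant $2$-tensor transformation law. The only cosmetic difference is that you differentiate the log-likelihood $\ell$ directly while the paper applies the chain rule to the density $p_\xi$ and works with $\int \frac{1}{\tilde{p}_\theta}\,\partial_i \tilde{p}_\theta\,\partial_j \tilde{p}_\theta~\d\mu$ --- equivalent via $\partial_i \ell = \frac{1}{p_\xi}\partial_i p_\xi$ --- and your closing remark about the Hessian form \eqref{eq:fisher-matrix-hessian}, where the second-derivative term $\frac{\partial^2 \xi^i}{\partial\theta^a\partial\theta^b}\,\E[\partial_i\ell]$ vanishes by the score identity, is a sound additional consistency check not present in the paper.
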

\begin{proof}
	Denote $\tilde{p}_\theta \coloneqq p_{\xi(\theta)} = \varphi(\xi(\theta))$ and note that, by the chain rule, $\frac{\partial \tilde{p}_\theta}{\partial \theta^i} = \sum_{k=1}^{n} \frac{\partial \xi^k}{\partial \theta^i} \frac{\partial p_\xi}{\partial \xi^k}$ and $\frac{\partial \tilde{p}_\theta}{\partial \theta^j} = \sum_{r=1}^{n} \frac{\partial \xi^r}{\partial \theta^j} \frac{\partial p_\xi}{\partial \xi^r}$. Thus
	\begin{align*}
		\tilde{g}_{ij}(\theta)
		&= \int_\Xcal \tilde{p}_\theta \left( \frac{\partial}{\partial \theta^i}\log \tilde{p}_\theta \right) \left( \frac{\partial}{\partial \theta^j}\log \tilde{p}_\theta\right) \d\mu
		= \int_\Xcal \frac{1}{\tilde{p}_\theta}  \frac{\partial \tilde{p}_\theta}{\partial \theta^i} \frac{\partial \tilde{p}_\theta}{\partial \theta^j}~\d\mu\\
		&= \sum_{k=1}^{n}\sum_{r=1}^{n} \left( \int_\Xcal \frac{1}{p_{\xi(\theta)}} \frac{\partial p_\xi}{\partial \xi^k}  \frac{\partial p_\xi}{\partial \xi^r}~\d\mu \right) \frac{\partial \xi^k}{\partial \theta^i} \frac{\partial \xi^r}{\partial \theta^j}
		= \sum_{k=1}^{n}\sum_{r=1}^{n} g_{kr}(\xi(\theta)) \frac{\partial \xi^k}{\partial \theta^i} \frac{\partial \xi^r}{\partial \theta^j}.
	\end{align*}
\end{proof}

Second, we state an invariance property specific to the Fisher metric:

\begin{proposition}[{\cite[Thm.~1.6.4]{calin2014}}] \label{prop:reparametrisation-sample-space}
	Let $X \colon \Omega \to \Xcal \subseteq \R^n$ be a random variable distributed according to~$p_\xi$. The Fisher metric is invariant under reparametrisations of the sample space~$\Xcal$.
\end{proposition}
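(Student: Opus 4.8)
The plan is to show that replacing the random variable $X$ by $Y = \kappa(X)$, where $\kappa \colon \Xcal \to \Ycal$ is a diffeomorphism encoding the reparametrisation of the sample space, leaves the Fisher matrix~\eqref{eq:fisher-matrix-gij} unchanged. First I would write the density of $Y$ through the change-of-variables formula: denoting by $J_{\kappa^{-1}}$ the Jacobian matrix of $\kappa^{-1}$, the density of $Y$ is
\begin{equation*}
	q_\xi(y) = p_\xi\bigl(\kappa^{-1}(y)\bigr)\,\bigl| \det J_{\kappa^{-1}}(y) \bigr| .
\end{equation*}
The crucial observation is that the Jacobian factor depends only on $\kappa$ and not on the parameter $\xi$. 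Hence, taking logarithms and differentiating with respect to $\xi^i$, the additive log-Jacobian term is annihilated, and the score transforms simply by composition:
\begin{equation*}
	\partial_i \log q_\xi(y) = \partial_i \log p_\xi\bigl(\kappa^{-1}(y)\bigr) .
\end{equation*}
Evaluating this at $y = \kappa(x)$ gives $\partial_i \log q_\xi\bigl(\kappa(x)\bigr) = \partial_i \log p_\xi(x)$; that is, the score of the reparametrised model at $\kappa(x)$ coincides with the score of the original model at $x$.

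It then remains to transport the Fisher matrix of $q_\xi$ back to $p_\xi$. Since $Y = \kappa(X)$, for any integrable $f$ we have $\E_{q_\xi}[f(Y)] = \E_{p_\xi}[f(\kappa(X))]$, simply by the definition of the push-forward. Applying this with $f(y) = \partial_i \log q_\xi(y)\,\partial_j \log q_\xi(y)$ and invoking the score identity above yields
\begin{equation*}
	\E_{q_\xi}\bigl[\partial_i \log q_\xi(Y)\,\partial_j \log q_\xi(Y)\bigr]
	= \E_{p_\xi}\bigl[\partial_i \log p_\xi(X)\,\partial_j \log p_\xi(X)\bigr]
	= g_{ij}(\xi),
\end{equation*}
which is exactly the desired invariance.

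The computation is essentially routine once the change of variables is in place, so I expect the argument to be short. The single point requiring care is the justification of the change-of-variables step: one must verify that $\kappa$ being a diffeomorphism guarantees a well-defined Jacobian, and, above all, that this Jacobian is independent of $\xi$, since it is precisely this independence that makes the log-Jacobian term vanish under $\partial_i$ and keeps the score unchanged. This bookkeeping — tracking where the Jacobian enters and confirming it plays no role in the $\xi$-derivatives — is the main, albeit minor, obstacle.
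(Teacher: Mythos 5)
Your proposal is correct and follows essentially the same route as the paper: both write the density of the transformed variable via the change-of-variables formula, observe that the Jacobian factor is independent of $\xi$ so the scores coincide ($\partial_i \tilde\ell(\xi) = \partial_i \ell(\xi)$), and then transfer the expectation back through the push-forward (the paper does this as a change of variables in the integral with respect to $f_{*}\mu$, which is your identity $\E_{q_\xi}[f(Y)] = \E_{p_\xi}[f(\kappa(X))]$ in different notation). You have also correctly identified the $\xi$-independence of the Jacobian as the crux, which is exactly the step the paper's proof hinges on.
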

\begin{proof}
	Consider the reparametrisation by the bijection $f \colon \Xcal \to \Ycal \subseteq \R^n$ and denote $\tilde{p}_\xi$ the distribution associated to the random variable $Y \coloneqq f(X)$. The Jacobian determinant $\left|\frac{\d f}{\d x}\right|$ of the transformation $f$ relates the relation between the densities:
	\begin{equation}
		p_\xi(x) = \tilde{p}_\xi(y) \left| \frac{\d f}{\d x} \right|.
	\end{equation}
	
	\noindent The log-likelihood functions are $\tilde\ell(\xi) = \log \tilde{p}_\xi (y) = \log \tilde{p}_\xi (f(x))$ and $\ell(\xi) = \log p_\xi(x) = \log \tilde{p}_\xi(y) + \log \left| \frac{\d f}{\d x} \right|$. As $f$ does not depend on the parameter~$\xi$, we have $\partial_i \ell(\xi) = \partial_i \tilde{\ell}(\xi)$, whence
	
	\begin{align*}
		g_{ij}(\xi)
		&= \int_\Xcal p_\xi \partial_i \ell(\xi) \partial_i \ell(\xi)~\d\mu
		= \int_\Xcal (\tilde{p}_\xi \circ f) \left| \frac{\d f}{\d x} \right| \partial_i \tilde\ell(\xi) \partial_i \tilde\ell(\xi)~\d\mu\\
		&= \int_\Ycal \tilde{p}_\xi \partial_i \tilde\ell(\xi) \partial_i \tilde\ell(\xi)~\d(f_{*}\mu)
		= \tilde{g}_{ij}(\xi).
	\end{align*}
\end{proof}

Furthermore, it can be shown that the Fisher metric is the unique Riemannian metric (up to a multiplicative constant) in statistical manifolds that is invariant under sufficient statistics~\cite[Thm.~1.2]{ay2015} (see also~\cite{le2017}). This invariance characterisation justifies the choice of this metric to study the geometry of statistical models.

The Fisher metric induces a notion of distance between two distributions in the same statistical manifold, called \emph{Fisher--Rao distance}, and given by the geodesic distance between these points. Specifically, consider a curve $\xi \colon \left[0,1\right] \to \Xi$ in the parameter space and its image $\gamma \colon \left[0,1\right] \to \Scal$ by the parametrisation $\varphi$, i.e., $\gamma(t) = (\varphi \circ \xi) (t)$, for $t \in \left[0,1\right]$. Since $\dot{\gamma}(t) = \d\varphi_{\xi(t)}\big(\dot{\xi}(t)\big)$, in the Fisher geometry, the length of $\gamma$ can be computed as
\begin{equation} \label{eq:curve-length}
	l(\gamma)
	\coloneqq \int_0^1 \sqrt{ \left\langle \dot{\gamma}(t), \dot{\gamma}(t) \right\rangle_{G \left( \xi (t) \right)} }~\d t
	= \int_0^1  \sqrt{ \dot{\xi}(t)^\t \, G(\xi(t)) \, \dot{\xi}(t) }~\d t.
\end{equation}

\noindent Then, given two distributions $p_{\xi_1}$ and $p_{\xi_2}$ in $\Scal$, the Fisher--Rao distance between them\footnote{
	We shall also refer to the Fisher--Rao distance between two parametric distributions as the distance between their respective parameters.
} is the infimum of the length of piecewise differentiable curves~$\gamma$ joining these two points:
\begin{equation} \label{eq:fisher-rao-distance}
	d_{\FR}(\xi_1, \xi_2)
	\coloneqq d_{\FR}(p_{\xi_1}, p_{\xi_2})
	\coloneqq \inf_{\gamma} \left\{ l(\gamma) \suchthat \gamma(0) = p_{\xi_1},\ \gamma(1) = p_{\xi_2} \right\}.
\end{equation}

A curve $\gamma(t) = (\varphi \circ \xi) (t)$ is a \emph{geodesic} if, in local coordinates $\xi=(\xi^1, \dots, \xi^n)$, the curve $\xi(t)$ is a solution to the geodesic differential equations
\begin{equation} \label{eq:geodesic-diff-eq}
	\ddot{\xi}^k(t) + \sum_{i=1}^{n} \sum_{j=1}^{n} \Gamma_{ij}^{k} \dot{\xi}^i(t) \dot{\xi}^j(t) = 0, \quad k \in \{ 1, \dots, n \},
\end{equation}
where $\Gamma_{ij}^{k}$ are the Christoffel symbols of the second kind, which can be obtained from the equations
\begin{equation*}
	\sum_{k=1}^{n} g_{\ell k} \Gamma_{ij}^{k} = \frac{1}{2} \left( \frac{\partial}{\partial \xi^i} g_{j\ell} + \frac{\partial}{\partial \xi^j} g_{\ell i} - \frac{\partial}{\partial \xi^\ell} g_{ij} \right), \quad i,j,\ell \in \{1,\dots,n\}
\end{equation*}

\noindent The Hopf--Rinow theorem (e.g., \cite[Thm.~6.4.6]{klingenber1978}) provides a sufficient condition for the minimum length, as in~\eqref{eq:fisher-rao-distance}, to be realised by a geodesic: if $(\Scal, d_\FR)$ is connected and complete as a metric space, then any two points $p,q$ in the manifold $\Scal$ can be joined by a minimising curve which is a geodesic, that is, a curve whose length is equal to the Fisher--Rao distance $d_{\FR}(p,q)$. This condition is satisfied in all statistical manifolds considered in this paper. 

\begin{remark}
	The Fisher--Rao distance is related to the Kullback-Leibler divergence~\cite[Thm.~4.4.5]{calin2014}:
	\begin{equation*}
		D_{\KL}(p_{\xi_1} \| p_{\xi_2}) = \frac{1}{2} d_{\FR}^2(p_{\xi_1},p_{\xi_2}) + o\left(d_{\FR}^2(p_{\xi_1},p_{\xi_2})\right),
	\end{equation*}
	where $o(x)$ represents a quantity such that $\lim_{x\to 0} \frac{o(x)}{x} = 0$. This means that the Kullback-Leibler divergence locally behaves as a `squared distance'. Differently from general statistical divergences, the Fisher--Rao distance is a proper distance, i.e., it is symmetric and th`e triangle inequality holds.
\end{remark}

Unfortunately, finding the Fisher--Rao distance between two distributions in a statistical manifold usually is a non-trivial task, since it involves finding the minimising geodesics, potentially by solving the geodesics differential equations~\eqref{eq:geodesic-diff-eq}, and then evaluating the integral in~\eqref{eq:curve-length}. In the case of one-dimensional manifolds, i.e., those which are parametrised by a single real number, computing the Fisher--Rao distance is easier, since the geodesics are immediately given. In such cases, the Fisher matrix $G(\xi) = [g_{11}(\xi)]$ contains a single element (also called \emph{Fisher information}). Given two parameters $\xi_1$ and $\xi_2$, there is only one path joining them, whose length does not depend on the chosen parametrisation. In particular, we can consider the arc length parametrisation $\xi(t) = t$, with $t \in \left[ \xi_1, \xi_2 \right]$, so that $\big|\dot{\xi}(t)\big| = 1$. Thus the expression for the length of the curve $\gamma(t) = (\varphi \circ \xi)(t)$ in~\eqref{eq:curve-length} becomes
\begin{equation*}
	l(\gamma)
	= \int_{\xi_1}^{\xi_2} \sqrt{g_{11}(\xi(t))}~\d t
	= \int_{\xi_1}^{\xi_2} \sqrt{g_{11}(\xi)}~\d \xi,
\end{equation*}

\noindent and the Fisher--Rao distance between distributions parametrised by $\xi_1$ and $\xi_2$ is
\begin{equation} \label{eq:Fisher--Rao-distance-dim1}
	d_{\FR}(\xi_1,\xi_2)
	= \left| \int_{\xi_1}^{\xi_2} \sqrt{g_{11}(\xi)}~\d \xi \right|.
\end{equation}

\noindent For higher-dimensional manifolds, the techniques to find the geodesics and the Fisher--Rao distance consist in directly solving the geodesic differential equations, or in doing an analogy with some well-known geometry (e.g., spherical, hyperbolic), as we shall see in the next sections.

\begin{remark}
	The expression \eqref{eq:Fisher--Rao-distance-dim1} allows one to find the Fisher--Rao distance in one-dimensional submanifolds of higher dimensional statistical manifolds, i.e., when only one parameter is allowed to vary and the others are fixed. This has been done for Gamma, Weibull and power function distributions in~\cite{atkinson1981,burbea1986}.
\end{remark}

\section{Hyperbolic Geometry Results} \label{sec:hyperbolic-geometry}

We recall in this section some classical results from hyperbolic geometry~\cite{beardon1983,cannon1997,ratcliffe2006}, since many of the statistical manifolds studied in this work are related to that geometry. These will be extensively used particularly in \S~\ref{subsec:negative-multinomial} and \S~\ref{subsec:elliptical}---\ref{subsec:power-function}. If desired, the reader may skip this section for now, and return to it when reading those subsections to get the details of the derivations.

We start with the hyperbolic geometry in dimension two, to be used in the approach of the statistical manifolds in \S~\ref{subsec:elliptical}---\ref{subsec:power-function}, analogously to what is done in~\cite{costa2015}. In this case, we consider the \emph{Poincaré half-plane} $\mathcal{H}^2 \coloneqq \left\{ (x,y) \in \R^2 \suchthat y>0 \right\}$ as a model for hyperbolic geometry, with the metric given in matrix form by

\begin{equation} \label{eq:hyperbolic-metric}
	G_{\Hcal^2}(x,y) =
	\begin{pmatrix}
		\frac{1}{y^2} & 0\\ 0 & \frac{1}{y^2}\\
	\end{pmatrix}.
\end{equation}

\noindent The geodesics in this manifold are vertical half-lines and half-circles centred at $y=0$, and the geodesic distance between two points is given by the following equivalent expressions:
\begin{align*}
	d_{\Hcal^2}\left((x_1,y_1),(x_2,y_2)\right)
	&= \log \left(\frac{
		\sqrt{(x_1-x_2)^2 + (y_1+y_2)^2} + \sqrt{(x_1-x_2)^2 + (y_1-y_2)^2}
	}{
		\sqrt{(x_1-x_2)^2 + (y_1+y_2)^2} - \sqrt{(x_1-x_2)^2 + (y_1-y_2)^2}
	}\right)\\
	&= \arccosh \left( 1 + \frac{(x_1-x_2)^2 + (y_1-y_2)^2}{2y_1y_2} \right)\\
	&= 2 \arctanh \left(\sqrt{\frac{
			(x_1-x_2)^2 + (y_1-y_2)^2
		}{
			(x_1-x_2)^2 + (y_1+y_2)^2
	}}\right).
\end{align*}

The next lemma allows one to relate the geodesic distance in a class of two-dimensional Riemannian manifold to that in the Poincaré half-plane.

\begin{lemma}\label{lemma:poincare-isometry}
	Consider the Poincaré half-plane model~$\Hcal^2$ with the hyperbolic metric~\eqref{eq:hyperbolic-metric}, and a two-dimensional Riemannian manifold~$(\Mcal,g_\Mcal)$, parametrised by a global coordinate system $\varphi \colon \Pi \subseteq \R \times \R_+^* \to \Mcal$. If the metric $g_{\Mcal}$ is given in matrix form by
	\begin{equation*}
		G_{\mathcal{M}}(x,y) =
		\begin{pmatrix}
			\frac{a}{y^2} & 0\\ 0 & \frac{b}{y^2}\\
		\end{pmatrix},
	\end{equation*}
	with $a, b$ positive constants, then the geodesic distance between two points in $\Mcal$ is
	\begin{equation} \label{eq:fr-poincare-relation}
		d_{\mathcal{M}} \big( \varphi(x_1,y_1), \varphi(x_2,y_2) \big)
		= \sqrt{b} \ d_{\Hcal^2} \left( (\sqrt{a}x_1, \sqrt{b}y_1), (\sqrt{a}x_2,\sqrt{b}y_2) \right).  
	\end{equation}
\end{lemma}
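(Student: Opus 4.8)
The plan is to exhibit an explicit scaled isometry between $\Mcal$ and the Poincaré half-plane $\Hcal^2$, namely the coordinate change $\Phi(x,y) \coloneqq (\sqrt{a}\,x, \sqrt{b}\,y)$, and to show that under $\Phi$ the metric $g_\Mcal$ is a constant multiple of the hyperbolic metric. First I would regard $(x,y)$ and $(u,v) \coloneqq (\sqrt{a}\,x, \sqrt{b}\,y)$ as two coordinate systems related by the bijection $\xi = \xi(\theta)$ whose Jacobian is the constant diagonal matrix $[\d\xi/\d\theta] = \mathrm{diag}(\sqrt{a}, \sqrt{b})$, and apply the covariance rule of Proposition~\ref{prop:reparametrisation-parameter-space} to pull the hyperbolic metric $G_{\Hcal^2}(u,v) = \mathrm{diag}(1/v^2, 1/v^2)$ back to the $(x,y)$-coordinates. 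Substituting $v = \sqrt{b}\,y$ produces a diagonal matrix whose entries differ from those of $G_\Mcal(x,y)$ by exactly the common factor $b$; this is the one short computation the argument requires, and it establishes the pointwise identity $G_\Mcal = b\,\Phi^* G_{\Hcal^2}$.

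Having established this metric identity, I would translate it into a statement about curve lengths. For any piecewise-differentiable curve $\gamma$ in $\Mcal$, the integrand in the length functional~\eqref{eq:curve-length} picks up the scalar $\sqrt{b}$, so that $l_{\Mcal}(\gamma) = \sqrt{b}\, l_{\Hcal^2}(\Phi \circ \gamma)$, where the right-hand length is measured with the hyperbolic metric. Because $\Phi$ is a diffeomorphism between the two parameter domains, the assignment $\gamma \mapsto \Phi \circ \gamma$ is a bijection between admissible curves joining the corresponding endpoints, so taking the infimum in~\eqref{eq:fisher-rao-distance} over both sides preserves the factor $\sqrt{b}$. This yields $d_{\Mcal}\big(\varphi(x_1,y_1), \varphi(x_2,y_2)\big) = \sqrt{b}\, d_{\Hcal^2}\big(\Phi(x_1,y_1), \Phi(x_2,y_2)\big)$, which is precisely~\eqref{eq:fr-poincare-relation}.

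The computation itself is routine; the only point demanding care is the passage from the pointwise relation $G_\Mcal = b\,\Phi^* G_{\Hcal^2}$ to the equality of infimal distances. I expect this to be the main, if minor, obstacle: one must observe that scaling a Riemannian metric by a positive constant $b$ scales every length, and hence the geodesic distance, by $\sqrt{b}$, and that the bijective correspondence of curves induced by $\Phi$ guarantees the two infima are taken over matching sets of paths. Completeness of $\Hcal^2$ ensures the infimum is realised by a geodesic, although the distance identity itself holds irrespective of whether a minimiser exists.
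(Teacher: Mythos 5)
Your proposal is correct and follows essentially the same route as the paper: both exhibit the scaling map $(x,y) \mapsto (\sqrt{a}\,x, \sqrt{b}\,y)$, verify that under it $G_\Mcal$ equals $b$ times the hyperbolic metric (you via the pullback rule of Proposition~\ref{prop:reparametrisation-parameter-space}, the paper by computing $\langle \dot\alpha, \dot\alpha\rangle$ and $\langle \dot\beta, \dot\beta\rangle$ along an arbitrary curve, which is the same computation), and conclude that lengths and hence infimal distances scale by $\sqrt{b}$. If anything, your explicit remark that the curve correspondence is a bijection between admissible paths joining matching endpoints makes the final infimum step slightly more careful than the paper's terse conclusion.
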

\begin{proof}
	Consider a curve $\pi(t) = \big(x(t), y(t)\big)$ in the parameter space~$\Pi$ and its image $\alpha(t) = (\varphi \circ \pi)(t)$ by the parametrisation. Applying the metric~$G_{\Mcal}(x,y)$ to $\dot{\alpha}(t) = \mathrm{d}\varphi_{\pi(t)}\big(\dot{\pi}(t)\big)$ in $\mathcal{M}$ gives
	\begin{align*}
		\left\langle \dot{\alpha}(t), \dot{\alpha}(t) \right\rangle_{G(\pi(t))}
		&= \begin{pmatrix}
			\dot{x}(t) & \dot{y}(t)
		\end{pmatrix}
		\begin{pmatrix}
			\frac{a}{(y(t))^2} & 0\\ 0 & \frac{b}{(y(t))^2}\\
		\end{pmatrix}
		\begin{pmatrix}
			\dot{x}(t) \\ \dot{y}(t)
		\end{pmatrix}\\
		&= \frac{1}{(y(t))^2} \left( a (\dot{x}(t))^2 + b (\dot{y}(t))^2 \right).
	\end{align*}
	
	\noindent On the other hand, consider the diffeomorphism $\psi \colon \Mcal \to \Hcal^2$, $\varphi(x,y) \mapsto (\sqrt{a}x,\sqrt{b}y)$, and the image curve $\beta(t) \coloneqq (\psi \circ \alpha)(t) = (\sqrt{a}x(t),\sqrt{b}y(t))$. Applying $G_{\Hcal^2}(\beta(t))$ to $\dot{\beta}(t) = \d\psi_{\alpha(t)}(\dot{\alpha}(t))$ in~$\Hcal^2$ gives
	\begin{align*}
		\left\langle \dot{\beta}(t), \dot{\beta}(t) \right\rangle_{G_{\Hcal^2}(\beta(t))}
		&= \begin{pmatrix}
			\sqrt{a}\dot{x}(t) & \sqrt{b}\dot{y}(t)
		\end{pmatrix}
		\begin{pmatrix}
			\frac{1}{b(y(t))^2} & 0\\ 0 & \frac{1}{b(y(t))^2}\\
		\end{pmatrix}
		\begin{pmatrix}
			\sqrt{a}\dot{x}(t) \\ \sqrt{b}\dot{y}(t)
		\end{pmatrix}\\
		&= \frac{1}{b(y(t))^2} \left( a (\dot{x}(t))^2 + b (\dot{y}(t))^2 \right).
	\end{align*}
	
	\noindent Thus, $\left\langle \dot{\alpha}(t), \dot{\alpha}(t) \right\rangle_{G_{\mathcal{M}}(\alpha(t))}= b\left\langle \dot{\beta}(t), \dot{\beta}(t) \right\rangle_{G_{\Hcal^2}(\beta(t))}$, that is, $\left\| \dot{\alpha}(t)\right\|_{G_{\Mcal}(\alpha(t))}=\sqrt{b} \left\| \dot{\beta}(t) \right\|_{G_{\Hcal^2}(\beta(t))}$. This implies that a curve $\alpha(t)$ connecting two points in $\mathcal{M}$ is a geodesic (minimises the length) if, and only if, its image $\beta(t)$ is a geodesic in $\Hcal^2$. Taking~\eqref{eq:curve-length} and~\eqref{eq:fisher-rao-distance} into account concludes the proof.
\end{proof}

\begin{remark}
	It is possible to deduce what the geodesics in $\Mcal$ look like in the parameter space~$\Pi$ (i.e., their preimages by $\varphi$), since they are the inverse image by~$\psi$ of geodesics in~$\Hcal^2$. Consider a geodesic in~$\Mcal$ connecting the points $\varphi(x_1,y_1)$ and $\varphi(x_2,y_2)$.	Its preimage in the parameter space is given either by the vertical line joining $y_1$ to $y_2$, if $x_1=x_2$, or, otherwise, by the arc of the half-ellipse joining $(x_1,y_1)$ to $(x_2,y_2)$, centred at $(C,0)$ and given by $\left(\frac{R}{\sqrt{a}}\cos(t)+C,\frac{R}{\sqrt{b}}\sin(t)\right)$, where $C = \frac{a(x_1^2-x_2^2)+b(y_1^2-y_2^2)}{2a(x_1-x_2)}$ and $R = \frac{1}{2} \sqrt{ a\left(x_1-x_2\right)^2 + \frac{b^2}{a}\left(\frac{y_1^2-y_2^2}{x_1-x_2}\right)^2 + 2b\left(y_1^2+y_2^2\right)^2 }$.
\end{remark}

More generally, the $n$-dimensional hyperbolic \emph{half-space model} is the Riemannian manifold $\Hcal^{n} \coloneqq \left\{ (x_1, \dots, x_n) \in \R^n \suchthat x_n > 0 \right\}$, equipped with the metric given in matrix form by
\begin{equation} \label{eq:hyperbolic-metric-general}
	G_{\Hcal^{n}}(x_1,\dots,x_n) =
	\begin{pmatrix}
		\frac{1}{x_n^2} & 0 & \cdots & 0\\
		0 & \frac{1}{x_n^2} & \cdots & 0\\
		\vdots & \vdots & \ddots & \vdots \\
		0 & 0 & \cdots & \frac{1}{x_n^2}
	\end{pmatrix},
\end{equation}

\noindent that is, $\left\langle (u_1,\dots,u_n),(v_1,\dots,v_n) \right\rangle_{G_{\Hcal^n}(x_1,\dots,x_n)} = \frac{1}{x_n^2} \sum_{i=1}^{n} u_iv_i$. This manifold has constant negative curvature. The geodesics in this manifold are vertical half-lines and vertical half-circles centred at the hyperplane $x_n=0$, and the geodesic distance between two points in $\Hcal^n$ is given by
\begin{equation} \label{eq:distance-hyperbolic}
	d_{\Hcal^n}\big( (x_1,\dots,x_n),(y_1,\dots,y_n) \big) = \arccosh \left( 1 + \frac{\sum_{i=1}^{n} (x_i-y_i)^2 }{2x_ny_n} \right).
\end{equation}

Restricted to points in the $(n-1)$-dimensional unit half-sphere in $\Hcal^{n}$, that is, $S_{1}^{n-1} \coloneqq \left\{ (x_1,\dots,x_{n}) \in \Hcal^{n} \suchthat \sum_{i=1}^{n} x_i^2=1 \right\}$, the expression for the distance becomes
\begin{equation} \label{eq:distance-hyperbolic-hemisphere}
	\tilde{d}_{S^{n-1}_1}\big( (x_1,\dots,x_{n}), (y_1,\dots,y_{n}) \big)
	= \arccosh \left( \frac{1 - \sum_{i=1}^{n-1} x_i y_i }{x_n y_n} \right).
\end{equation}
From~\eqref{eq:distance-hyperbolic}, we immediately see that the distance $d_{\Hcal^n}$ is invariant by dilation or contraction, that is, denoting $\xv \coloneqq (x_1,\dots,x_n) \in \Hcal^{n}$ and $\yv \coloneqq (y_1,\dots,y_n) \in \Hcal^{n}$, we have $d_{\Hcal^{n}}(\xv,\yv) = d_{\Hcal^{n}}(\lambda\xv,\lambda\yv)$, for $\lambda\neq0$.
For points $\xv$ and $\yv$ in the half-sphere $S_{r}^{n-1} \coloneqq \left\{ (x_1,\dots,x_n) \in \Hcal^{n} \suchthat \sum_{i=1}^{n} x_i^2 = r^2 \right\}$ of radius~$r$, we can see that the distance between them, restricted to the sphere~$S_r^{n-1}$, is the distance given by~\eqref{eq:distance-hyperbolic-hemisphere} between their projections onto the radius-$1$ sphere, that is,
\begin{equation} \label{eq:distance-hyperbolic-hemisphere-r}
	\tilde{d}_{S_r^{n-1}}\left( \xv, \yv \right)
	= \tilde{d}_{S_1^{n-1}} \left( \frac{\xv}{r}, \frac{\yv}{r} \right).
\end{equation}

The central projection $\pi \colon S_{1}^{n} \subseteq \Hcal^{n+1} \to \Hcal^n$ given by $(x_1,\dots,x_n,x_{n+1}) \mapsto \left( \frac{2x_2}{x_1+1}, \dots, \frac{2x_{n+1}}{x_1+1} \right)$ is an isometry between the unit half-sphere~$S_{1}^{n} \subseteq \Hcal^{n+1}$, with the restriction of the ambient hyperbolic metric, and the hyperbolic space~$\Hcal^{n}$. It provides a  \emph{hemisphere model} in dimension $n+1$ for the $n$-dimensional hyperbolic space~\cite{cannon1997}. The geodesics in the hemisphere model are the inverse image by $f$ of the geodesics in $\Hcal^n$, namely, semicircles orthogonal to the hyperplane $x_{n+1}=0$.

\section{Discrete Distributions} \label{sec:discrete-distributions}

In the following, we begin with examples of one-dimensional statistical manifolds (\S~\ref{subsec:binomial}---\ref{subsec:negative-binomial}), and then consider high-dimensional manifolds (\S~\ref{subsec:categorical}---\ref{subsec:negative-multinomial}). The results are summarised in Table~\ref{tab:discrete-distributions}.

\begin{sidewaystable}
	\centering
	\caption{Fisher--Rao distances for discrete distributions.}
	\label{tab:discrete-distributions}
	\def\arraystretch{2}\tabcolsep=3pt
	\footnotesize
	\begin{tabular}{ccccccc}
		\hline
		& Distribution & Support & Parameters & Fisher matrix & Fisher--Rao distance & References\\ \hline\hline \vspace{0.5em}
		
		Binomial
		& $\binom{n}{x} \theta^x (1-\theta)^{n-x}$
		& $x \in \{ 0,1,\dots,n \}$
		& $\theta \in \left]0,1\right[$
		& $\begin{pmatrix} \frac{n}{\theta(1-\theta)} \end{pmatrix}$
		& $2\sqrt{n}\left| \arcsin\sqrt{\theta_1} - \arcsin\sqrt{\theta_2} \right|$
		& \cite{calin2014,atkinson1981,burbea1986} \\ \vspace{0.5em}
		
		Poisson
		& $\frac{\lambda^xe^{-\lambda}}{x!}$
		& $x \in \N$
		& $\lambda \in \R_+^*$
		& $\begin{pmatrix} \frac{1}{\lambda} \end{pmatrix}$
		& $2 \left| \sqrt{\lambda_1} - \sqrt{\lambda_2} \right|$
		& \cite{calin2014,atkinson1981,burbea1986} \\ \vspace{0.5em}
		
		Geometric
		& $\theta(1-\theta)^{x-1}$
		& $x \in \N^*$
		& $\theta \in \left]0,1\right[$
		& $\begin{pmatrix} \frac{1}{\theta^2(1-\theta)} \end{pmatrix}$
		& $2 \left| \arctanh \sqrt{1-\theta_1} - \arctanh \sqrt{1-\theta_2}  \right|$
		& \cite{calin2014,oller1985} \\ \vspace{0.5em}
		
		\shortstack{Negative\\binomial}
		& $\frac{\Gamma(x+r)}{x! \Gamma(r)} \theta^r (1-\theta)^{x}$
		& $x \in \N$
		& $\theta \in \left]0,1\right[$
		& $\begin{pmatrix} \frac{r}{\theta^2(1-\theta)} \end{pmatrix}$
		& $2 \sqrt{r} \left| \arctanh \sqrt{1-\theta_1} - \arctanh \sqrt{1-\theta_2}  \right| $
		& \cite{burbea1986,oller1985}\\ \vspace{0.5em}
		
		Categorical
		& $\sum_{i=1}^{n} p_i\1_{\{i\}}(x)$
		& $x \in \{1,\dots,n\}$
		&\shortstack{\scriptsize$(p_1,\dots,p_{n-1}) \in \left]0,1\right[^{n-1},$\\ \scriptsize$\textstyle\sum_i p_i < 1$, $p_n\coloneqq 1 - \sum_i p_i$}
		& $ \begin{pmatrix} \frac{\delta_{ij}}{p_i} + \frac{1}{p_n} \end{pmatrix}_{i,j}$
		& $2 \arccos \left( \sum_{i=1}^{n} \sqrt{p_iq_i} \right)$ &\shortstack{\cite{calin2014,ay2015,atkinson1981}\\ \cite{burbea1986,kass1997}} \\ \vspace{0.5em}
		
		Multinomial 
		& $m! \prod_{i=1}^{n} \frac{p_i^{x_i}}{x_i!}$ 
		&\shortstack{\scriptsize$(x_1,\dots,x_n) \in \N^n,$\\ \scriptsize$\textstyle\sum_i x_i = m$}
		&\shortstack{\scriptsize$(p_1,\dots,p_{n-1}) \in \left]0,1\right[^{n-1},$\\ \scriptsize$\textstyle\sum_i p_i < 1$, $p_n\coloneqq 1 - \sum_i p_i$}
		& $\begin{pmatrix} \frac{m\delta_{ij}}{p_i} + \frac{m}{p_n} \end{pmatrix}_{i,j}$ & $2 \sqrt{m} \arccos \left( \sum_{i=1}^{n} \sqrt{p_iq_i} \right)$
		& \cite{calin2014,atkinson1981,burbea1986} \\ \vspace{0.5em}
		
		\shortstack{Negative\\multinomial}
		& $p_n^{x_n}\frac{\Gamma(\sum_{i=1}^{n} x_i)}{\Gamma(x_n)} \prod_{i=1}^{n-1} \frac{p_i^{x_i}}{x_i!}$
		&\shortstack{$(x_1,\dots,x_{n-1})$\\\hspace{2.5em}$\in\N^{n-1}$}
		&\shortstack{\scriptsize$(p_1,\dots,p_{n-1}) \in \left]0,1\right[^{n-1},$\\ \scriptsize$\textstyle\sum_i p_i < 1$, $p_n\coloneqq 1 - \sum_i p_i$}
		& $\begin{pmatrix} \frac{x_n\delta_{ij}}{p_ip_n} + \frac{x_n}{p_n^2} \end{pmatrix}_{i,j}$
		& $2 \sqrt{x_n} \arccosh \left( \frac{1 - \sum_{i=1}^{n-1} \sqrt{p_iq_i}}{\sqrt{p_nq_n}} \right)$
		& \cite{burbea1986,oller1985,khan2022}\\
		\hline
	\end{tabular}
\end{sidewaystable}

\subsection{Binomial} \label{subsec:binomial}

A binomial distribution~\cite{atkinson1981,burbea1986,calin2014} models the probability of having~$x$ successes in~$n$ independent and identically distributed (i.i.d.) Bernoulli experiments with parameter~$\theta$. Its p.m.f. is given by $p(x) = \binom{n}{x} \theta^x (1-\theta)^{n-x}$, defined for $x \in \{0,1,\dots,n\}$, and parametrised by $\theta \in \left]0,1\right[$. In this case, $\partial_\theta \ell(\theta) = \frac{x}{\theta} - \frac{n-x}{1-\theta}$, and the Fisher information is
\begin{align} \label{eq:fisher-matrix-binomial}
	g_{11}(\theta)
	&= \E \left[ \left( \partial_\theta \ell(\theta) \right)^2 \right]
	= \E \left[ \left(\frac{X}{\theta} - \frac{n-X}{1-\theta}\right)^2 \right] \nonumber\\
	&= \frac{\E[X^2]}{\theta^2} + \frac{2\E[X^2]-2n\E[X]}{\theta(1-\theta)} + \frac{n -2n\E[X] + \E[X^2]}{(1-\theta)^2} \nonumber\\
	&= \frac{n}{\theta(1-\theta)},
\end{align}

\noindent where we have used that $\E[X] = n\theta$ and $\E[X^2] = n\theta -n\theta^2 +n^2\theta^2$. The Fisher--Rao distance is then
\begin{equation} \label{eq:fr-distance-binomial}
	d_{\FR}(\theta_1,\theta_2)
	= \left| \int_{\theta_1}^{\theta_2} \sqrt{\frac{n}{\theta(1-\theta)}}~\d \theta \right| 
	= 2\sqrt{n}\left| \arcsin\sqrt{\theta_1} - \arcsin\sqrt{\theta_2} \right|.
\end{equation}

\subsection{Poisson} \label{subsec:poisson}

A Poisson distribution~\cite{atkinson1981,burbea1986,calin2014} has p.m.f. $p(x) = \frac{\lambda^xe^{-\lambda}}{x!}$, defined for $x \in \N$, and parametrised by $\lambda \in \R_+^{*}$. In this case, we have $\partial_{\lambda} \ell(\lambda) = \frac{x}{\lambda} - 1$, and the Fisher information is given by
\begin{align} \label{eq:fisher-matrix-poisson}
	g_{11}(\lambda)
	&= \E \left[ \left(\partial_{\lambda} \ell(\lambda) \right)^2 \right]
	= \E \left[ \left( \frac{X}{\lambda} - 1 \right)^2 \right] \nonumber\\
	&= \frac{1}{\lambda^2} \E [ X^2 ] - \frac{2}{\lambda} \E\left[ X \right] + 1 \nonumber\\
	&= \frac{1}{\lambda},
\end{align}

\noindent where we have used that $\E[X] = \lambda$ and $\E[X^2] = \lambda(\lambda + 1)$. Thus the Fisher--Rao distance is
\begin{equation} \label{eq:fr-distance-poisson}
	d_{\FR}(\lambda_1, \lambda_2)
	= \left| \int_{\lambda_1}^{\lambda_2} \frac{1}{\sqrt{\lambda}}~\d \lambda \right|
	= 2 \left| \sqrt{\lambda_1} - \sqrt{\lambda_2} \right|.
\end{equation}

\subsection{Geometric} \label{subsec:geometric}

A geometric distribution~\cite{calin2014,oller1985} models the number of i.i.d. Bernoulli trials with parameter~$\theta$ needed to obtain one success. Its p.m.f. is $p(x) = \theta(1-\theta)^{x-1}$, defined for $x \in \N^*$, and parametrised by $\theta \in \left]0,1\right[$. We have $\partial_\theta \ell(\theta) = \frac{1}{\theta} - \frac{x-1}{1-\theta}$, and the Fisher information is
\begin{align} \label{eq:fisher-matrix-geometric}
	g_{11}(\theta)
	&= \E \left[ \left(\partial_{\theta} \ell(\theta) \right)^2 \right]
	= \E \left[ \left( \frac{1}{\theta} - \frac{X-1}{1-\theta} \right)^2 \right] \nonumber\\
	&= \frac{1}{\theta^2} + \frac{2-2\E[X]}{\theta(1-\theta)} + \frac{\E[X^2] -2\E[X] + 1}{(1-\theta^2)} \nonumber\\
	&= \frac{1}{\theta^2(1-\theta)},
\end{align}

\noindent where we have used that $\E[X] = \frac{1}{\theta}$ and $\E[X^2] = \frac{2-\theta}{\theta^2}$. The Fisher--Rao distance is 
\begin{equation} \label{eq:fr-distance-geometric}
	d_{\FR}(\theta_1,\theta_2)
	= \left| \int_{\theta_1}^{\theta_2} \frac{1}{\theta\sqrt{1-\theta}}~\d \theta \right| 
	= 2 \left| \arctanh \sqrt{1-\theta_1} - \arctanh \sqrt{1-\theta_2} \right|.
\end{equation}

\subsection{Negative binomial} \label{subsec:negative-binomial}

A negative binomial distribution~\cite{burbea1986,oller1985} models the excess of i.i.d. Bernoulli experiments with parameter~$\theta$ needed until a number of~$r$ successes occur. It has p.m.f. $p(x) = \frac{\Gamma(x+r)}{x! \Gamma(r)} \theta^r (1-\theta)^x$, defined for $x \in \N$, and is parametrised by $\theta \in \left] 0,1\right[$, for a fixed $r$, that can be extend to $r \in \R^*_+$. We have $\partial_\theta \ell(\theta) = \frac{r}{\theta} - \frac{x}{1-\theta}$, and the Fisher information is
\begin{align} \label{eq:fisher-matrix-negative-binomial}
	g_{11}(\theta)
	&= \E \left[ \left(\partial_{\theta} \ell(\theta) \right)^2 \right]
	= \E \left[ \left( \frac{r}{\theta} - \frac{X}{1-\theta} \right)^2 \right]\nonumber\\
	&= \frac{r^2}{\theta^2} - \frac{2r\E[X]}{\theta(1-\theta)} + \frac{\E[X^2]}{(1-\theta^2)}\nonumber\\
	&= \frac{r}{\theta^2(1-\theta)},
\end{align}
where we have used that $\E[X] = \frac{r(1-\theta)}{\theta}$ and $\E[X^2] = \frac{r(1-\theta)+r^2(1-\theta)^2}{(1-\theta)^2}$. The Fisher--Rao distance is then
\begin{equation} \label{eq:fr-distance-negative-binomial}
	d_{\FR}(\theta_1,\theta_2)
	= \left| \int_{\theta_1}^{\theta_2} \frac{\sqrt{r}}{\theta\sqrt{1-\theta}}~\d \theta \right| 
	= 2 \sqrt{r} \left| \arctanh \sqrt{1-\theta_1} - \arctanh \sqrt{1-\theta_2} \right|.
\end{equation}

\subsection{Categorical} \label{subsec:categorical}

A categorical distribution~\cite{atkinson1981,ay2015,burbea1986,kass1997,calin2014} models a random variable taking values in the sample space $\Xcal = \{ 1, 2, \dots, n \}$ with probabilities $p_1, \dots, p_n$, and has p.m.f. $p(x) = \sum_{i=1}^{n} p_i \1_{\{i\}}(x)$. The associated $(n-1)$-dimensional statistical manifold
\begin{equation*}
	\Scal = \left\{ p = \textstyle\sum_{i=1}^{n} p_i \1_{\{i\}} \suchthat p_i \in \left]0,1\right[,\ \textstyle\sum_{i=1}^{n}p_i = 1 \right\}
\end{equation*}

\noindent is in correspondence with the interior of the probability simplex $\mathring{\Delta}^{n-1} \coloneqq \left\{ \pmb{p} = (p_1, \dots, p_n) \suchthat p_i \in \left]0,1\right[,\ \textstyle\sum_{i=1}^{n}p_i = 1 \right\}$ through the bijection $\iota \colon \mathring\Delta^{n-1} \to \Scal$, given by $(p_1, \dots, p_n) \mapsto \sum_{i=1}^{n} p_i \1_{\{i\}}$. Both these manifolds can be parametrised by the set
\begin{equation} \label{eq:Xi-categorica}
	\Xi = \left\{ \xi = (\xi^1,\dots,\xi^{n-1}) \in \R^{n-1} \suchthat \xi^i > 0,\ \textstyle\sum_{i=1}^{n-1} \xi^i < 1 \right\},
\end{equation}
by taking $p_i = \xi^i$, for $1 \le i \le n-1$, and $p_n = 1 - \sum_{i=1}^{n-1} \xi^i$.

To compute the Fisher matrix, it is useful to write $p(x) = \sum_{i=1}^{n-1} \xi^i \1_{\{i\}}(x) + \big( 1 -  \sum_{i=1}^{n-1} \xi^i \big) \1_{\{n\}}(x)$, so that
\begin{equation*}
	\partial_i \ell(\xi)
	= \frac{\1_{\{i\}}(x) - \1_{\{n\}}(x)}{\sum_{k=1}^{n} p_k \1_{\{k\}}(x)},
\end{equation*}
with $p_k = p_k(\xi)$ as above. The elements of the Fisher matrix are, for $1 \le i,j \le n-1$,
\begin{align} \label{eq:fisher-matrix-categorical}
	g_{ij}(\xi) &= \E \left[ \left( \partial_i \ell(\xi) \right) \left( \partial_j \ell(\xi) \right) \right]
	= \E \left[ \textstyle\frac{\left( \1_{\{i\}}(X) - \1_{\{n\}}(X) \right)\left( \1_{\{j\}}(X) - \1_{\{n\}}(X) \right)}{\left(\sum_k p_k \1_{\{k\}}(X)\right)^2} \right] \nonumber\\
	&= \E\left[ \textstyle\frac{\1_{\{i\}}(X) \1_{\{j\}}(X)}{\left(\sum_k p_k \1_{\{k\}}(X)\right)^2} 
	- \frac{\1_{\{i\}}(X) \1_{\{n\}}(X)}{\left(\sum_k p_k \1_{\{k\}}(X)\right)^2} 
	- \frac{\1_{\{j\}}(X) \1_{\{n\}}(X)}{\left(\sum_k p_k \1_{\{k\}}(X)\right)^2} 
	+ \frac{\left(\1_{\{n\}}(X)\right)^2}{\left(\sum_k p_k \1_{\{k\}}(X)\right)^2} \right]\nonumber\\
	&= \frac{\delta_{ij}}{p_i} + \frac{1}{p_n}\nonumber\\
	&= \frac{\delta_{ij}}{\xi^i} + \frac{1}{1 - \sum_{k=1}^{n-1} \xi^k}, 
\end{align}

\noindent where we have used that $\E\left[ \frac{\1_{\{i\}}(X) \1_{\{j\}}(X)}{\left(\sum_k p_k \1_{\{k\}}(X)\right)^2} \right] = \frac{\delta_{ij}}{p_i}$.

To obtain the geodesics and the Fisher--Rao distance, it is convenient to consider the mapping $p_i \mapsto 2\sqrt{p_i}$ that takes the simplex $\Delta^{n-1} \subseteq \R^{n}$ (in correspondence with the statistical manifold) to the positive part of the radius-two Euclidean sphere, denoted $S_{2,+}^{n-1} \subseteq \R^n$. In fact, this bijection is an isometry~\cite{ay2017,kass1997}:

\begin{proposition}
	The diffeomorphism
	\begin{align} \label{eq:f-categorical}
		f \colon \quad \Scal \subseteq \Pcal({\Xcal}) &\to S_{2,+}^{n-1} \subseteq \R^n\\
		p = \sum_{i=1}^{n} p_i \1_{\{i\}} &\mapsto (2\sqrt{p_1}, \dots, 2\sqrt{p_n}) \nonumber
	\end{align}
	is an isometry between the statistical manifold $\Scal$ equipped with the Fisher metric~$g_p$ and $S_{2,+}^{n-1}$ with the restriction of the ambient Euclidean metric.
\end{proposition}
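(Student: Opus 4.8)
The plan is to establish the two ingredients in the definition of an isometry: that $f$ is a diffeomorphism onto $S_{2,+}^{n-1}$, and that it pulls the ambient Euclidean metric back to the Fisher metric. First I would confirm that the image lands where claimed: since $\sum_{i=1}^{n}\big(2\sqrt{p_i}\big)^2 = 4\sum_{i=1}^{n}p_i = 4$, each point $f(p)$ has Euclidean norm $2$, and the strict positivity of the $p_i$ places it in the open positive octant, so $f(\Scal)\subseteq S_{2,+}^{n-1}$. The coordinatewise rule $p_i\mapsto 2\sqrt{p_i}$ is smooth with smooth inverse $u_i\mapsto u_i^2/4$ on the positive orthant, so $f$ is a diffeomorphism onto its image.

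For the metric identity I would work in the global coordinates $\xi=(\xi^1,\dots,\xi^{n-1})$ from \eqref{eq:Xi-categorica}, with $p_i=\xi^i$ for $i\le n-1$ and $p_n = 1-\sum_{k=1}^{n-1}\xi^k$, so that $f$ reads $u_i = 2\sqrt{\xi^i}$ for $i\le n-1$ and $u_n = 2\sqrt{1-\sum_k \xi^k}$. Differentiating gives the Jacobian entries $\partial u_i/\partial\xi^j = \delta_{ij}/\sqrt{p_i}$ for $i\le n-1$ and $\partial u_n/\partial\xi^j = -1/\sqrt{p_n}$. The pullback metric is then the matrix $J^\t J$, whose $(j,l)$ entry is $\sum_{i=1}^{n}(\partial u_i/\partial\xi^j)(\partial u_i/\partial\xi^l) = \delta_{jl}/p_j + 1/p_n$, matching term by term the Fisher matrix \eqref{eq:fisher-matrix-categorical}. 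This is exactly the mechanism of the covariance formula \eqref{eq:covariant-parameters}, now applied to a map into the sphere rather than a reparametrisation of $\Scal$.

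The step that most deserves care, rather than being a true obstacle, is reconciling the ambient Euclidean metric of $\R^n$ with the metric induced on the sphere. This needs no separate argument: because $f(\Scal)$ lies inside $S_{2,+}^{n-1}$, the differential $\d f_p$ carries $T_p\Scal$ into $T_{f(p)}S_{2,+}^{n-1}$, and on such tangent vectors the induced metric is by definition the restriction of the ambient inner product. Hence the computation of $J^\t J$ with the full Euclidean metric already produces the intrinsic metric of the hemisphere, and the agreement with \eqref{eq:fisher-matrix-categorical} completes the proof that $f$ is an isometry.
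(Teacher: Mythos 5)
Your proposal is correct and follows essentially the same route as the paper: the paper computes $\d f_p\bigl(\frac{\partial}{\partial\xi^i}(p)\bigr)$ along coordinate curves and checks that the Euclidean inner products of these images reproduce the Fisher matrix entries $\frac{\delta_{ij}}{\xi^i} + \frac{1}{1-\sum_k \xi^k}$, which is exactly your $J^\t J$ computation written vector-by-vector. Your additional explicit checks---that $f(\Scal)$ lies on the radius-two sphere, that $f$ is a diffeomorphism, and that the ambient and induced metrics agree on tangent vectors to the hemisphere---are correct and fill in details the paper leaves implicit.
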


\begin{proof}
	We will show that $g_p(u,v) = \left\langle \d f_p(u), \d f_p(v) \right\rangle$, for all $p \in \Scal$, $u,v \in T_p\Scal$. Let~$\varphi$ denote the parametrisation of the statistical manifold $\Scal$. Consider the curve $\alpha_i(t) = \varphi\left(\xi^1, \dots, \xi^i+t, \dots, \xi^{n-1} \right) \in \Scal$ and take $p=\varphi(\xi^1,\dots,\xi^{n-1})$. We have that
	\begin{equation*}
		\beta_i(t) \coloneqq \left( f \circ \alpha_i \right)(t) = \left( 2\sqrt{\xi^1}, \dots, 2\sqrt{\xi^i+t}, \dots, 2\sqrt{\xi^{n-1}}, 2\sqrt{1-\textstyle\sum_{k=1}^{n-1} \xi^k - t} \right).
	\end{equation*}
	
	\noindent Now, we can compute the differential applied to the tangent vector $\frac{\partial}{\partial\xi^i}(p)$:
	\begin{equation*}
		\d f_p \left( \frac{\partial}{\partial \xi^i}(p) \right)
		= \left. \frac{\d}{\d t} \beta_i (t) \right|_{t=0}
		= \left( 0, \dots, 0, \frac{1}{\sqrt{\xi^i}}, 0, \dots, 0, \frac{-1}{\sqrt{1-\textstyle\sum_{k=1}^{n-1}\xi^k}} \right).
	\end{equation*}
	
	\noindent Therefore,
	\begin{equation*}
		\left\langle \d f_p\left(\frac{\partial}{\partial\xi^i}(p)\right), \d f_p\left(\frac{\partial}{\partial\xi^j}(p)\right) \right\rangle
		= \frac{\delta_{ij}}{\xi^i} + \frac{1}{1-\sum_{k=1}^{n-1} \xi^k},
	\end{equation*}
	
	\noindent which is equal to $g_{ij}(\xi) = g_p \left( \frac{\partial}{\partial \xi^i}(p), \frac{\partial}{\partial \xi^j}(p) \right)$ in~\eqref{eq:fisher-matrix-categorical}. Since $\left\{ \frac{\partial}{\partial \xi^1}(p), \dots, \frac{\partial}{\partial \xi^{n-1}}(p) \right\}$ is a basis of $T_p\Scal$, this is enough to show that $f$ is indeed an isometry.
\end{proof}

Thus, the Fisher metric in $\Scal$ coincides with the Euclidean metric restricted to the positive part of the sphere~$S^{n-1}_{2,+}$, that is, the Fisher--Rao distance between distributions $p = \varphi(p_1, \dots, p_{n-1})$ and $q = \varphi(q_1, \dots, q_{n-1})$ in $\Scal$ is equal to the length of geodesic joining $f(p)$ and $f(q)$ on the sphere, which is great circle arc. This length is double the angle~$\alpha$ between the vectors $f(p)$ and $f(q)$, i.e., $2 \alpha = 2 \arccos \left\langle \frac{f(p)}{2}, \frac{f(q)}{2} \right\rangle = 2 \arccos \left(\sum_{i=1}^{n} \sqrt{p_iq_i}\right)$, with $p_n = 1-\sum_{i=1}^{n-1}p_i$ and $q_n = 1-\sum_{i=1}^{n-1}q_i$. Therefore, the Fisher--Rao distance between these two distributions is
\begin{equation} \label{eq:fr-distance-categorical}
	d_{\FR}(p,q) = 2 \arccos \left(\sum_{i=1}^{n} \sqrt{p_iq_i}\right).
\end{equation}

\noindent Note that the isometry~$f$ also allows extending the Fisher metric to the boundaries of the statistical manifold~$\Scal$.

\begin{remark}
	The reparametrisation to the sphere provides a nice geometrical interpretation for the relation between the Fisher--Rao distance and the Hellinger distance~\cite[\S~2.4]{tsybakov2009} $d_{\Hell}(p,q) = \sqrt{\sum_{i=1}^{n} \left( \sqrt{p_i} - \sqrt{q_i} \right)^2}$.	While the Fisher--Rao distance between distributions $p$ and $q$ is the length of the radius-two circumference arc between $f(p)$ and $f(q)$, the Hellinger distance is half the Euclidean distance between $f(p)$ and $f(q)$, i.e., $2d_{\Hell}(p,q) = \|f(p)-f(q)\|_2$. In other words, double the Hellinger distance is the arc-chord approximation for the Fisher--Rao distance.
\end{remark}

\subsection{Multinomial} \label{subsec:multinomial}

Consider $m$ i.i.d. experiments that follow a categorical distribution with $n$ possible outcomes and probabilities $p_1, \dots, p_n$. A multinomial distribution~\cite{atkinson1981,burbea1986,calin2014} gives the probability of getting $x_i$ times the $i$-th outcome, for $1 \le i \le n$ and $\sum_{i=1}^{n} x_i = m$. Its p.m.f. is $p(\pmb{x}) = p(x_1, \dots, x_n) = m! \prod_{i=1}^{n} \frac{p_i^{x_i}}{x_i!}$ and is defined on the sample space $\Xcal = \left\{ \pmb{x} = (x_1, \dots, x_n) \in \N^n \suchthat \sum_{i=1}^{n} x_i = m \right\}$. This distribution is parametrised by the same $\xi = (\xi^1, \dots, \xi^{n-1}) \in \Xi$ as in the categorical distribution, cf.~\eqref{eq:Xi-categorica}, with $p_i = \xi^i$, for $1 \le i \le n-1$ and $p_n = 1 - \sum_{i=1}^{n-1} \xi^i$. In this case, we have $\partial_i \ell(\xi) = \frac{x_i}{p_i} - \frac{x_n}{p_n}$ and $\partial_j\partial_i \ell(\xi) = - \frac{x_i}{p_i^2}\delta_{ij} - \frac{x_n}{p_n^2}$. Thus, the elements of the Fisher matrix are given by
\begin{align} \label{eq:fisher-matrix-multinomial}
	g_{ij}(\xi)
	&= - \E \left[ \partial_j\partial_i \ell(\xi) \right]
	= \E \left[ \frac{X_i}{p_i^2}\delta_{ij} + \frac{X_n}{p_n^2} \right]\nonumber\\
	&= m \left( \frac{\delta_{ij}}{p_i} + \frac{1}{p_n} \right)\nonumber\\
	&= m \left( \frac{\delta_{ij}}{\xi^i} + \frac{1}{1-\sum_{k=1}^{n-1} \xi^k} \right),
\end{align}

\noindent where we have used that $\E[X_i] = m p_i$. Note that this is the same metric as for the categorical distribution~\eqref{eq:fisher-matrix-categorical}, up to the factor $m$. Therefore, the Fisher--Rao distance between two distributions $p = \varphi(p_1, \dots, p_{n-1})$ and $q = \varphi(q_1, \dots, q_{n-1})$, with $p_n = 1-\sum_{i=1}^{n-1}p_i$ and $q_n = 1-\sum_{i=1}^{n-1}q_i$ is
\begin{equation}
	d_{\FR}(p,q) = 2 \sqrt{m} \arccos \left( \sum_{i=1}^{n} \sqrt{p_iq_i} \right).
\end{equation}

\subsection{Negative multinomial} \label{subsec:negative-multinomial}

A negative multinomial distribution~\cite{burbea1986,oller1985,khan2022} generalises the negative binomial distribution. Consider a sequence of i.i.d. categorical experiments with $n$ possible outcomes. A negative multinomial distribution models the number of times $x_1, \dots, x_{n-1}$ that the first $n-1$ outcomes occur before the $n$-th outcome occurs $x_n$ times. It is characterised by the p.m.f. $p(\pmb{x}) = p(x_1, \dots, x_{n-1}) = p_n^{x_n} \frac{\Gamma(\sum_{i=1}^{n} x_i)}{\Gamma(x_n)}  \prod_{i=1}^{n-1} \frac{p_i^{x_i}}{x_i!}$, and defined for $\pmb{x} = (x_1, \dots, x_{n-1}) \in \N^{n-1}$. It has the same parametrisation as the categorical distribution, cf.~\eqref{eq:Xi-categorica}, with $p_i = \xi^i$, for $1 \le i \le n-1$ and $p_n = 1 - \sum_{i=1}^{n-1} \xi^i$, for a fixed $x_n$ that can be extended to $x_n \in \R_+^*$. In this case, we have $\partial_i\ell(\xi) = \frac{x_i}{p_i} - \frac{x_n}{p_n}$ and $\partial_j\partial_i\ell(\xi) = -\frac{x_i}{p_i^2}\delta_{ij} -\frac{x_n}{p_n^2}$. The elements of the Fisher matrix are then
\begin{align}
	g_{ij}(\xi) &= -\E\left[ \partial_j\partial_i\ell(\xi) \right] = \E \left[  \frac{X_i}{p_i^2}\delta_{ij}  + \frac{x_n}{p_n^2} \right]\nonumber\\
	&= \frac{x_n}{p_n} \left( \frac{\delta_{ij}}{p_i} + \frac{1}{p_n} \right)\nonumber\\
	&= \frac{x_n}{1-\sum_{k=1}^{n-1}\xi^k} \left( \frac{\delta_{ij}}{\xi^i} + \frac{1}{1-\sum_{k=1}^{n-1}\xi^k} \right),
\end{align}
where we used that $\E[X_i] = x_np_i/p_n$.

To find the Fisher--Rao distance, we relate the geometry of this manifold to the radius-two hemisphere model with hyperbolic metric (cf. Section~\ref{sec:hyperbolic-geometry}) using a similar diffeomorphism as~\eqref{eq:f-categorical}. Consider
\begin{align} \label{eq:f-negative-multinomial}
	f  \colon \quad \Scal \subseteq \Pcal(\Xcal) &\to S_{2,+}^{n-1} \subseteq \Hcal^{n}\\
	p = \sum_{i=1}^{n} p_i \1_{\{i\}} &\mapsto \left( 2\sqrt{p_1}, \dots, 2\sqrt{p_n} \right). \nonumber
\end{align}

\noindent Denote $\varphi$ the parametrisation of the statistical manifold, and consider the point $p = \varphi(\xi^1,\dots,\xi^{n-1})$. Taking the curves $\alpha_i(t) = \varphi(\xi^1, \dots, \xi^i+t, \dots, \xi^{n-1})$ in $\Scal$ their images in $S_{2,+}^{n-1}$ are
\begin{equation*}
	\beta_i(t)
	\coloneqq (f \circ \alpha_i)(t)
	= \left( 2\sqrt{\xi^1}, \dots, 2\sqrt{\xi^i+t}, \dots, 2\sqrt{\xi^{n-1}}, 2\sqrt{1 - \textstyle\sum_{k=1}^{n-1}\xi^k -t} \right).
\end{equation*}
We have
\begin{equation*}
	\d f_p\left( \frac{\partial}{\partial\xi^i}(p) \right)
	= \left.\frac{\d}{\d t}\beta_i(t)\right|_{t=0}
	= \left(0, \dots, 0, \frac{1}{\sqrt{\xi^i}}, 0, \dots, \frac{-1}{\sqrt{1-\textstyle\sum_{k=1}^{n-1}\xi^k}} \right),
\end{equation*}
therefore
\begin{equation*}
	\left\langle \d f_p\left( \frac{\partial}{\partial\xi^i}(p) \right), \d f_p\left( \frac{\partial}{\partial\xi^j}(p) \right) \right\rangle_{G_{\Hcal^{n}}(p)}
	= \frac{1}{4\left(1-\sum_{k=1}^{n-1}\xi^k\right)} \left( \frac{\delta_{ij}}{\xi^i} + \frac{1}{1-\sum_{k=1}^{n-1}\xi^k} \right),
\end{equation*}
and we conclude that $g_{ij}(\xi) = g_{\xi} \left(  \frac{\partial}{\partial\xi^i}(p),  \frac{\partial}{\partial\xi^j}(p) \right) = 4x_n \left\langle \d f_p\left( \frac{\partial}{\partial\xi^i}(p) \right), \d f_p\left( \frac{\partial}{\partial\xi^j}(p) \right) \right\rangle_{G_{\Hcal^n}(p)}$. This means that, up to the factor $4x_n$, $f$ in~\eqref{eq:f-negative-multinomial} is an isometry. Then, using~\eqref{eq:distance-hyperbolic-hemisphere-r}, we find that the Fisher--Rao distance between two distributions $p = \varphi(p_1, \dots, p_{n-1})$ and $q = \varphi(q_1, \dots, q_{n-1})$, with $p_n = 1-\sum_{i=1}^{n-1}p_i$ and $q_n = 1-\sum_{i=1}^{n-1}q_i$, is
\begin{align}
	d_\FR(p,q)
	&= 2 \sqrt{x_n}\ \tilde{d}_{S_2^{n-1}}\left( (2\sqrt{p_1},\dots,2\sqrt{p_n}), (2\sqrt{q_1},\dots,2\sqrt{q_n}) \right) \nonumber\\
	&= 2 \sqrt{x_n} \arccosh \left( \frac{1 - \sum_{i=1}^{n-1} \sqrt{p_iq_i}}{\sqrt{p_nq_n}} \right).
\end{align}

\noindent In the above equality, we used the fact that the hyperbolic distance in $S_{2,+}^{n-1} \subseteq \Hcal^n$ is invariant to dilation and contraction, as remarked in Section~\ref{sec:hyperbolic-geometry}. We also conclude that the geodesics in the statistical manifold~$\Scal$ are associated to orthogonal semicircles in $S_{2,+}^{n-1}$.

It is interesting to note that the similar maps \eqref{eq:f-categorical} and \eqref{eq:f-negative-multinomial}, respectively, for categorical and negative multinomial distributions are used to embed the statistical manifold (in correspondence with the simplex) in the radius-two sphere, but with different metrics. Figure~\ref{fig:geodesics-discrete} illustrates the geodesics according to these two metrics.

\begin{figure}
	\centering
	\begin{subfigure}[b]{0.3\textwidth}
		\centering
		\includegraphics[width=\linewidth]{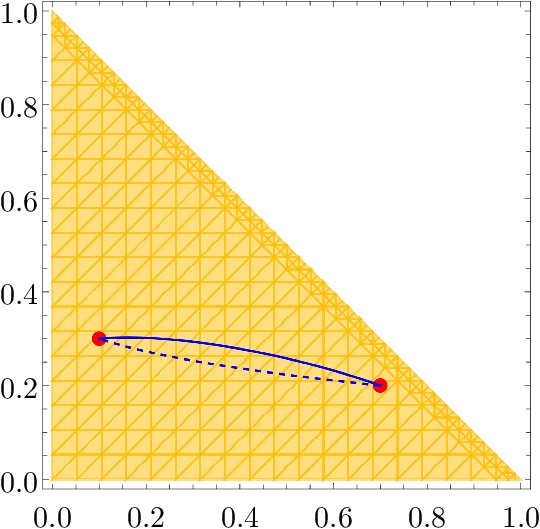}
		\caption{Parameter space $\Xi \subseteq \R^2$.}
	\end{subfigure}
	\hfill
	\begin{subfigure}[b]{0.3\textwidth}
		\centering
		\includegraphics[height=\linewidth]{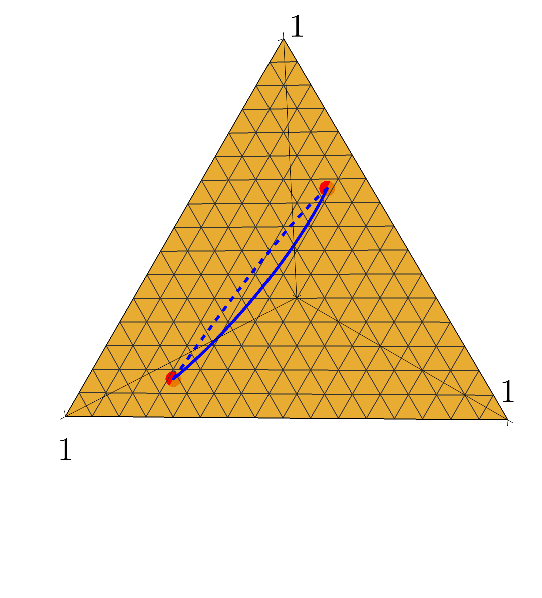}
		\caption{Simplex~$\Delta^2$.}
	\end{subfigure}
	\hfill
	\begin{subfigure}[b]{0.3\textwidth}
		\centering
		\includegraphics[height=\linewidth]{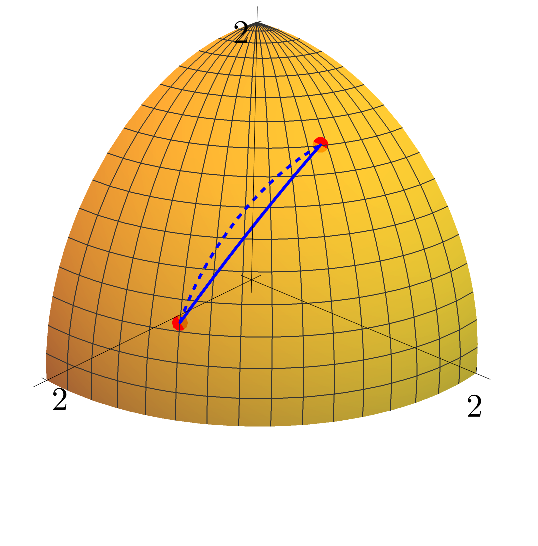}
		\caption{Sphere~$S^2_{2,+}$.}
	\end{subfigure}
	\caption{
		Geodesics joining points $p=(0.7,0.2,0.1)$ and $q=(0.1,0.3,0.6)$ according to categorical (solid) and negative multinomial (dashed) metrics, seen in the parameter space, on the simplex, and on the sphere. The distance between the categorical distributions is $d_{\text{cat}}(p,q) \approx 1.432$, and between the negative multinomial distributions is $d_{\text{neg-mult}}(p,q) \approx 2.637 \sqrt{x_n}$.
	}
	\label{fig:geodesics-discrete}
\end{figure}

\section{Continuous Distributions} \label{sec:continuous-distributions}

In the following, we start presenting one-dimensional examples (\S~\ref{subsec:exponential}---\ref{subsec:erlang}), then we consider two-dimensional statistical manifolds (\S~\ref{subsec:elliptical}---\ref{subsec:power-function}), and multivariate models (\S~\ref{subsec:wishart}--\ref{subsec:inverse-wishart}). The results are summarised in Tables~\ref{tab:continuous-distributions} and~\ref{tab:continuous-distributions-2}.

\begin{sidewaystable}
	\centering
	\caption{Fisher--Rao distances for continuous distributions.}
	\label{tab:continuous-distributions}
	\def\arraystretch{1.5}\tabcolsep=3pt
	\footnotesize
	\begin{tabular}{ccccccc}
		\hline
		& Distribution & Support & Parameters & Fisher matrix & Fisher--Rao distance & References\\ \hline\hline \vspace{0.5em}
		
		Exponential
		& $\lambda e^{-\lambda x}$
		& $x \in \R_+$
		& $\lambda \in \R_+^*$
		& $\begin{pmatrix} \frac{1}{\lambda^2} \end{pmatrix}$
		& $\left| \log \lambda_1 - \log \lambda_2 \right|$
		& \cite{atkinson1981} \\ \vspace{0.5em}
		
		Rayleigh
		& $\frac{x}{\sigma^2} \exp \left( -\frac{x^2}{2\sigma^2} \right)$
		& $x \in \R_+$
		& $\sigma \in \R_+^*$
		& $\begin{pmatrix} \frac{4}{\sigma^2} \end{pmatrix}$
		& $2 \left| \log \sigma_1 - \log \sigma_2 \right|$
		& $*$ \\ \vspace{0.5em}
		
		Erlang
		& $\lambda^k x^{k-1} e^{-\lambda x}/(k-1)!$
		& $x \in \R_+$
		& $\lambda \in \R_+^*$
		& $\begin{pmatrix} \frac{k}{\lambda^2} \end{pmatrix}$
		& $\sqrt{k} \left| \log \lambda_1 - \log \lambda_2 \right|$
		& $*$ \\ \vspace{0.5em}
		
		Gaussian & $\frac{1}{\sigma\sqrt{2\pi}}\exp\left(-\frac{(x-\mu)^2}{2\sigma^2}\right)$
		& $x \in \R$
		& $(\mu,\sigma) \in \R \times \R_+^*$
		& $\begin{pmatrix} \frac{1}{\sigma^2} & 0\\ 0 & \frac{2}{\sigma^2}\end{pmatrix}$
		& $2\sqrt{2} \arctanh \left( \sqrt{\frac{(\mu_1-\mu_2)^2 + 2(\sigma_1-\sigma_2)^2}{(\mu_1-\mu_2)^2 + 2(\sigma_1+\sigma_2)^2}} \right)$
		&\shortstack{\cite{calin2014,atkinson1981,burbea1986}\\\cite{mithcell1988,arwini2008,costa2015}}\\ \vspace{0.5em}
		
		Laplace
		& {$\frac{1}{2\sigma}\exp\left( -\frac{|x-\mu|}{\sigma} \right)$}
		& $x \in \R$
		& $(\mu,\sigma) \in \R \times \R_+^*$
		& $\begin{pmatrix}
			\frac{1}{\sigma^2} & 0 \\
			0 & \frac{1}{\sigma^2}
		\end{pmatrix}$
		& $2\arctanh \left( \sqrt{\frac{(\mu_1-\mu_2)^2+(\sigma_1-\sigma_2)^2}{(\mu_1-\mu_2)^2+(\sigma_1+\sigma_2)^2}} \right)$
		& \shortstack{Particular\\ case in~\cite{verdoolaege2012}}\\ \vspace{0.5em}
		
		\shortstack{Generalised\\Gaussian} 
		& $\frac{\beta}{2\sigma \Gamma(1/\beta)}\exp\left(-\frac{\vert x-\mu\vert^{\beta}}{\sigma}\right)$
		& $x \in \R$
		& $(\mu, \sigma) \in \R \times \R_+^*$
		& $\begin{pmatrix}
			\frac{\beta}{\sigma^2}\frac{\Gamma(2-\frac{1}{\beta})}{\Gamma(1+\frac{1}{\beta})} & 0\\
			0 & \frac{\beta}{\sigma^2}
		\end{pmatrix}$
		& $\sqrt{\beta+1} \arctanh \left(
		\sqrt{\frac{
				(\mu_1-\mu_2)^2\Gamma\left(2-\frac{1}{\beta}\right) + \frac{\beta+1}{\beta}(\sigma_1-\sigma_2)^2\Gamma\left(1+\frac{1}{\beta}\right)
			}{
				(\mu_1-\mu_2)^2\Gamma\left(2-\frac{1}{\beta}\right) + \frac{\beta+1}{\beta}(\sigma_1+\sigma_2)^2\Gamma\left(1+\frac{1}{\beta}\right)
		}}
		\right)$
		& \shortstack{Particular\\case in~\cite{verdoolaege2012}} \\ \vspace{0.5em}
		
		Logistic
		& $\frac{\exp\left(-(x-\mu)/\sigma\right)}{\sigma\left(\exp\left(-(x-\mu)/\sigma\right)+1\right) ^2}$
		& $x \in \R$
		& $(\mu, \sigma) \in \R \times \R^*_+$
		& $\begin{pmatrix}
			\frac{1}{3\sigma^2} & 0\\
			0 & \frac{\pi^2+3}{9\sigma^2}
		\end{pmatrix}$
		& $\frac{2\sqrt{\pi^2+3}}{3} \arctanh \left( \sqrt{\frac{3(\mu_2-\mu_1)^2+(\pi^2+3)(\sigma_2-\sigma_1)^2}{3(\mu_2-\mu_1)^2+(\pi^2+3)(\sigma_2+\sigma_1)^2}} \right)$
		& \cite{oller1987} \\ \vspace{0.5em}
		
		Cauchy
		& $\dfrac{\sigma}{\pi \left[ (x-\mu)^2 + \sigma^2 \right]}$
		& $x \in \R$
		& $(\mu, \sigma) \in \R \times \R_+^*$
		& $\begin{pmatrix}
			\frac{1}{2\sigma^2} & 0\\
			0 & \frac{1}{2\sigma^2}
		\end{pmatrix}$
		& ${\sqrt{2}} \arctanh \left( \sqrt{\frac{(\mu_1-\mu_2)^2+(\sigma_1-\sigma_2)^2}{(\mu_1-\mu_2)^2+(\sigma_1+\sigma_2)^2}} \right)$
		& \cite{mithcell1988,nielsen2020-cauchy} \\ \vspace{0.5em}
		
		\shortstack{Student's~$t$\\\phantom{a}}
		& \shortstack{
			$\left(1+ \frac{1}{\nu}(\frac{x-\mu}{\sigma})^2 \right)^{-\frac{\nu+1}{2}}$\\
			$\times\frac{\Gamma\left((\nu+1)/2\right)}{\sigma\sqrt{\pi\nu}\Gamma(\nu/2)}$
		}
		& $x \in \R$
		& $(\mu,\sigma) \in \R \times \R_+^*$
		& $\begin{pmatrix} 
			\frac{\nu+1}{(\nu+3)\sigma^2} & 0\\
			0 & \frac{2\nu}{(\nu+3)\sigma^2}
		\end{pmatrix}$
		& $2\sqrt{\frac{2\nu}{\nu+3}} \arctanh \left( \sqrt{\frac{(\nu+1)(\mu_2-\mu_1)^2+2\nu(\sigma_2-\sigma_1)^2}{(\nu+1)(\mu_2-\mu_1)^2+2\nu(\sigma_2+\sigma_1)^2}} \right)$
		& \cite{mithcell1988} \\ \hline
	\end{tabular}
\end{sidewaystable}

\begin{sidewaystable}
	\centering
	\caption{Fisher--Rao distances for continuous distributions (continued).}
	\label{tab:continuous-distributions-2}
	\def\arraystretch{1.5}\tabcolsep=3pt
	\footnotesize
	\begin{tabular}{ccccccc}
		\hline
		& Distribution & Support & Parameters & Fisher matrix & Fisher--Rao distance & References\\ \hline\hline \vspace{0.5em}
		
		\shortstack{Log-\\Gaussian}
		& $\frac{1}{\sigma x \sqrt{2\pi}}\exp\left(-\frac{(\log x-\mu)^2}{2\sigma^2}\right)$
		& $x \in \R_+^*$
		& $(\mu,\sigma) \in \R \times \R_+^*$
		& $\begin{pmatrix} \frac{1}{\sigma^2} & 0\\ 0 & \frac{2}{\sigma^2}\end{pmatrix}$
		& $2\sqrt{2} \arctanh \left( \sqrt{\frac{(\mu_1-\mu_2)^2 + 2(\sigma_1-\sigma_2)^2}{(\mu_1-\mu_2)^2 + 2(\sigma_1+\sigma_2)^2}} \right)$
		& \cite{calin2014,arwini2008} \\ \vspace{0.5em}
		
		\shortstack{Inverse\\Gaussian}
		& $\sqrt{\frac{\lambda}{2\pi x^3}} \exp \left( - \frac{\lambda(x - \mu)^2}{2\mu^2 x} \right)$
		& $x \in \R_+^*$
		& $(\lambda,\mu) \in \R_+^* \times \R_+^*$
		& $\begin{pmatrix}
			\frac{1}{2\lambda^2} & 0\\
			0 & \frac{\lambda}{\mu^3}
		\end{pmatrix}$
		& $2\sqrt{2} \arctanh \left(
		\sqrt{\frac{
				\mu_1\mu_2(\sqrt{\lambda_1}-\sqrt{\lambda_2})^2+2\lambda_1\lambda_2(\sqrt{\mu_1}-\sqrt{\mu_2})^2
			}{
				\mu_1\mu_2(\sqrt{\lambda_1}-\sqrt{\lambda_2})^2+2\lambda_1\lambda_2(\sqrt{\mu_1}+\sqrt{\mu_2})^2}
		} \right)$
		& \cite{villarroya1993,khan2022}  \\ \vspace{0.5em}
		
		Gumbel
		& $\frac{1}{\sigma} e^{-\frac{x-\mu}{\sigma}} \exp\left(-e^{-\frac{x-\mu}{\sigma}}\right)$
		& $x \in \R$
		& $(\mu, \sigma) \in \R \times \R^*_+$
		& {$\begin{pmatrix}
				\frac{1}{\sigma^2} & \frac{\gamma-1}{\sigma^2}\\
				\frac{\gamma-1}{\sigma^2} & \frac{(\gamma-1)^2+\frac{\pi^2}{6}}{\sigma^2}
			\end{pmatrix}$}
		& $\frac{2\pi}{\sqrt{6}} \arctanh \left( \sqrt{
			\frac{
				\left[(\mu_1-\mu_2)-(1-\gamma)(\sigma_1-\sigma_2)\right]^2+\frac{\pi^2}{6}(\sigma_1-\sigma_2)^2
			}{
				\left[(\mu_1-\mu_2)-(1-\gamma)(\sigma_1-\sigma_2)\right]^2+\frac{\pi^2}{6}(\sigma_1+\sigma_2)^2
			}
		} \right)$
		& \cite{oller1987} \\ \vspace{0.5em}
		
		Fréchet
		& $\frac{\lambda}{\beta} \left( \frac{x}{\beta} \right)^{-\lambda-1} e^{- \left(\frac{x}{\beta}\right)^{-\lambda}}$
		& $x \in \R_+^*$
		& $(\beta,\lambda) \in \R_+^* \times \R_+^*$
		& {$\begin{pmatrix}
				\frac{\lambda^2}{\beta^2} & \frac{1-\gamma}{\beta}\\
				\frac{1-\gamma}{\beta} & \frac{(\gamma-1)^2+\frac{\pi^2}{6}}{\lambda^2}
			\end{pmatrix}$}
		& {$\frac{2\pi}{\sqrt{6}} \arctanh \left( \sqrt{
				\frac{
					\left[\log\frac{\beta_1}{\beta_2}-(1-\gamma)\left(\frac{1}{\lambda_1}-\frac{1}{\lambda_2}\right)\right]^2+\frac{\pi^2}{6}\left(\frac{1}{\lambda_1}-\frac{1}{\lambda_2}\right)^2
				}{
					\left[\log\frac{\beta_1}{\beta_2}-(1-\gamma)\left(\frac{1}{\lambda_1}-\frac{1}{\lambda_2}\right)\right]^2+\frac{\pi^2}{6}\left(\frac{1}{\lambda_1}+\frac{1}{\lambda_2}\right)^2
				}
			} \right)$}
		& \cite{oller1987} \\ \vspace{0.5em}
		
		Weibull
		& $\frac{\lambda}{\beta} \left( \frac{x}{\beta} \right)^{\lambda-1} e^{- \left(\frac{x}{\beta}\right)^{\lambda}}$
		& $x \in \R_+^*$
		& $(\beta,\lambda) \in \R_+^* \times \R_+^*$
		& {$\begin{pmatrix}
				\frac{\lambda^2}{\beta^2} & \frac{\gamma-1}{\beta}\\
				\frac{\gamma-1}{\beta} & \frac{(\gamma-1)^2+\frac{\pi^2}{6}}{\lambda^2}
			\end{pmatrix}$}
		& {$\frac{2\pi}{\sqrt{6}} \arctanh \left( \sqrt{
				\frac{
					\left[\log\frac{\beta_2}{\beta_1}-(1-\gamma)\left(\frac{1}{\lambda_1}-\frac{1}{\lambda_2}\right)\right]^2+\frac{\pi^2}{6}\left(\frac{1}{\lambda_1}-\frac{1}{\lambda_2}\right)^2
				}{
					\left[\log\frac{\beta_2}{\beta_1}-(1-\gamma)\left(\frac{1}{\lambda_1}-\frac{1}{\lambda_2}\right)\right]^2+\frac{\pi^2}{6}\left(\frac{1}{\lambda_1}+\frac{1}{\lambda_2}\right)^2
				}
			} \right)$}
		& \shortstack{\cite{burbea1986,oller1987,wauters1993}} \\ \vspace{0.5em}
		
		Pareto
		& ${\theta}\alpha^\theta{x^{-(\theta+1)}}$
		& $x \in \left[\alpha,\infty\right[$
		& $(\theta, \alpha) \in \R^*_+ \times \R^*_+$
		& $\begin{pmatrix}
			\frac{1}{\theta^2} & 0\\
			0 & \frac{\theta^2}{\alpha^2}
		\end{pmatrix}$
		& $2 \arctanh \left( \sqrt{\frac{(\theta_1\theta_2\log(\alpha_1/\alpha_2))^2+(\theta_1-\theta_2)^2}{(\theta_1\theta_2\log(\alpha_1/\alpha_2))^2+(\theta_1+\theta_2)^2}} \right)$
		& \cite{burbea1986,li2022} \\ \vspace{0.5em}
		
		\shortstack{Power\\function}
		& $\theta \beta^{-\theta} x^{\theta-1}$
		& $x \in \left]0,\beta\right]$
		& $(\theta,\beta) \in \R_+^* \times \R_+^*$
		& $\begin{pmatrix}
			\frac{1}{\theta^2} & 0\\
			0 & \frac{\theta^2}{\beta^2}
		\end{pmatrix}$
		& $2 \arctanh \left( \sqrt{\frac{(\theta_1\theta_2\log(\beta_{1}/\beta_{2}))^2+(\theta_1-\theta_2)^2}{(\theta_1\theta_2\log(\beta_{1}/\beta_{2}))^2+(\theta_1+\theta_2)^2}} \right)$
		& \shortstack{Particular\\case in~\cite{burbea1986}} \\ \vspace{0.5em}
		
		Wishart
		& see below
		& $X\in P_m(\R)$
		& \shortstack{same as for\\$\Sigma \in P_m(\R)$}
		& $\frac{n}{2} D_m^\t \left(\Sigma^{-1}\otimes\Sigma^{-1}\right)D_m$
		& $\sqrt{\frac{n}{2}\sum_{k=1}^m \left(\log \lambda_k \right)^2}$
		& \cite{ayadi2023} \\ \vspace{0.5em}
		
		\shortstack{Inverse\\Wishart}
		& see below
		& $X\in P_m(\R)$
		& \shortstack{same as for\\$\Sigma \in P_m(\R)$}
		& $\frac{n}{2} D_m^\t \left(\Sigma^{-1}\otimes\Sigma^{-1}\right)D_m$
		& $\sqrt{\frac{n}{2}\sum_{k=1}^m \left(\log \lambda_k \right)^2}$
		& * \\ \hline
	\end{tabular}
	{
		\begin{flushleft}
		Wishart distribution: $\frac{{(\det X )}^{(n-m-1)/2}\exp\left(-\frac{1}{2}\tr(\Sigma^{-1}X)\right)}{2^{nm/2}(\det \Sigma)^{n/2}\Gamma_m(n/2)}$\\
		Inverse Wishart distribution: $\frac{{(\det \Sigma)^{n/2} (\det X )}^{-(n+m+1)/2}\exp\left(-\frac{1}{2}\tr(\Sigma X^{-1})\right)}{2^{nm/2}\Gamma_m(n/2)}$
		\end{flushleft}
	}
\end{sidewaystable}

\subsection{Exponential} \label{subsec:exponential}

An exponential distribution~\cite{atkinson1981} has p.d.f. $p(x) = \lambda e^{-\lambda x}$, defined for $x \in \R_+$, and parametrised by $\lambda \in \R_+^{*}$. In this case, we have $\partial_{\lambda}\ell(\lambda) = \frac{1}{\lambda} - x$, and the Fisher information is
\begin{align}
	g_{11}(\lambda)
	&= \E \left[ \left(\partial_{\lambda} \ell(\lambda) \right)^2 \right]
	= \E \left[ \left( \frac{1}{\lambda} - X \right)^2 \right] \nonumber\\
	&= \frac{1}{\lambda^2} - \frac{2}{\lambda}\E[X] + \E[X^2] \nonumber\\
	&= \frac{1}{\lambda^2},
\end{align}

\noindent where we have used that $\E[X]=\frac{1}{\lambda}$ and $\E[X^2] = \frac{2}{\lambda^2}$. The Fisher--Rao distance is given by
\begin{equation}
	d_{\FR}(\lambda_1,\lambda_2)
	= \left| \int_{\lambda_1}^{\lambda_2} \frac{1}{\lambda}~\d \lambda \right| 
	= \left| \log \lambda_1 - \log \lambda_2 \right|.
\end{equation}

\subsection{Rayleigh} \label{subsec:rayleigh}

A Rayleigh distribution has p.d.f. $p(x) = \frac{x}{\sigma^2}\exp\left( -\frac{x^2}{2\sigma^2} \right)$, defined for $x \in \R_+$, and parametrised by $\sigma \in \R_+^*$. We have $\partial_{\sigma} \ell(\sigma) = \frac{x^2}{\sigma^3} - \frac{2}{\sigma}$, and the Fisher information is
\begin{align}
	g_{11}(\sigma)
	&= \E \left[ \left(\partial_{\sigma} \ell(\sigma) \right)^2 \right]
	= \E \left[ \left( \frac{X^2}{\sigma^3} - \frac{2}{\sigma} \right)^2 \right]\nonumber\\
	&= \frac{\E[X^4]}{\sigma^6} - \frac{2\E[X^2]}{\sigma^4} + \frac{4}{\sigma^2}\nonumber\\
	&= \frac{4}{\sigma^2},
\end{align}

\noindent where we have used that $\E[X^2] = 4\sigma^2$ and $\E[X^4] = 8\sigma^4$. Thus, the Fisher--Rao distance is given by
\begin{equation}
	d_{\FR}(\sigma_1,\sigma_2)
	= \left| \int_{\sigma_1}^{\sigma_2} \frac{2}{\sigma}~\d \sigma \right| 
	= 2 \left| \log \sigma_1 - \log \sigma_2 \right|.
\end{equation}

\subsection{Erlang} \label{subsec:erlang}

An Erlang distribution has p.d.f. $p(x) = \frac{\lambda^k x^{k-1} e^{-\lambda x}}{(k-1)!}$, defined for $x \in \R_+$, and parametrised by $\lambda \in \R_+^*$, for a fixed $k \in \N^*$. We have $\partial_\lambda \ell(\lambda) = \frac{k}{\lambda} - x$, so that the Fisher information is
\begin{align}
	g_{11}(\lambda)
	&= \E \left[ \left(\partial_{\lambda} \ell(\lambda) \right)^2 \right]
	= \E \left[ \left( \frac{k}{\lambda} - X \right)^2 \right]\nonumber\\
	&= \E[X^2] - \frac{2k\E[X]}{\lambda} + \frac{k^2}{\lambda^2}\nonumber\\
	&= \frac{k}{\lambda^2},
\end{align}

\noindent where we have used that $\E[X] = \frac{k}{\lambda}$ and $\E[X^2] = \frac{k(k+1)}{\lambda^2}$. The Fisher--Rao distance is then
\begin{equation}
	d_{\FR}(\lambda_1,\lambda_2)
	= \left| \int_{\lambda_1}^{\lambda_2} \frac{\sqrt{k}}{\lambda}~\d \lambda \right| 
	= \sqrt{k} \left| \log \lambda_1 - \log \lambda_2 \right|.
\end{equation}

\subsection{Univariate elliptical distributions} \label{subsec:elliptical}

Elliptical distributions are a class of distributions that generalise Gaussian distributions~\cite{fang1990}. Here we focus on univariate elliptical distributions, which are defined for $x \in \R$, parametrised by $(\mu, \sigma) \in \R \times \R_+^{*}$, and have p.d.f. of the form
\begin{equation} \label{eq:elliptical-distribution}
	p(x) = \frac{1}{\sigma} h \left( \frac{(x-\mu)^2}{\sigma^2} \right),
\end{equation}
for a fixed measurable function $h \colon \R_+ \to \R_+$ that satisfies $\int_{-\infty}^{\infty} h(z^2)~\d z = 1$ and $\lim_{z\to+\infty} zh(z^2)=0$. For a random variable~$X$ distributed according to~\eqref{eq:elliptical-distribution}, provided its mean and variance exist, they are given by $\E[X]=\mu$ and $\text{Var}(X) = \sigma^2 \int_{-\infty}^{\infty} z^2h(z^2)~\d z$. For each function $h$, the set of distributions of the form~\eqref{eq:elliptical-distribution} forms a statistical manifold parametrised by $(\mu,\sigma)$. Some examples of these manifolds have been studied in~\cite{mithcell1988}, and we write some other examples in the same standard form.

For these distributions, we have $\partial_{\mu} \ell \coloneqq \partial_{\mu} \ell(\mu,\sigma) = -\frac{2(x-\mu)}{\sigma^2} \frac{h' \left(\sigma^{-2}(x-\mu)^2 \right)}{h \left(\sigma^{-2}(x-\mu)^2 \right)}$, and $\partial_{\sigma} \ell \coloneqq \partial_{\sigma} \ell(\mu,\sigma) = -\frac{1}{\sigma} - \frac{2(x-\mu)^2}{\sigma^3} \frac{h' \left(\sigma^{-2}(x-\mu)^2 \right)}{h \left(\sigma^{-2}(x-\mu)^2 \right)}$. Thus, the elements of the Fisher matrix are

\begin{align*}
	g_{11} &= \E \left[ \left( \partial_{\mu}\ell \right)^2 \right]\\
	&= \int_{-\infty}^{\infty}  \left( -\frac{2(x-\mu)}{\sigma^2} \frac{h' \left(\sigma^{-2}(x-\mu)^2 \right)}{h \left(\sigma^{-2}(x-\mu)^2 \right)} \right)^2 \frac{1}{\sigma} h \left( \frac{(x-\mu)^2}{\sigma^2} \right)~\d x\\
	&=\frac{4}{\sigma^2}\int_{-\infty}^{\infty} z^2 \frac{\left[h'(z^2)\right]^2}{h(z^2)}~\d z,\\
	g_{12} = g_{21} &= \E \left[ \left(\partial_\mu \ell\right) \left(\partial_\sigma \ell\right) \right]\\
	&=\int^{\infty}_{-\infty}
	\left(
	-\frac{2(x-\mu)}{\sigma^2} \frac{h' \left(\sigma^{-2}(x-\mu)^2 \right)}{h \left(\sigma^{-2}(x-\mu)^2 \right)}
	\right)
	\left(
	-\frac{1}{\sigma} - \frac{2(x-\mu)^2}{\sigma^3} \frac{h' \left(\sigma^{-2}(x-\mu)^2 \right)}{h \left(\sigma^{-2}(x-\mu)^2 \right)} 
	\right)\\
	&\hspace{4em}\times \frac{1}{\sigma} h \left( \frac{(x-\mu)^2}{\sigma^2} \right)~\d x\\
	&=\frac{2}{\sigma^2} \int^{\infty}_{-\infty} z\left(h'(z^2) + 2z^2 \frac{\left[h'(z^2)\right]^2}{h(z^2)}\right)~\d z\\
	&=0,\\
	g_{22} &=\E \left[ \left( \partial_{\sigma}\ell \right)^2 \right]\\
	&= \int_{-\infty}^{\infty}  \left(
	-\frac{1}{\sigma} - \frac{2(x-\mu)^2}{\sigma^3} \frac{h' \left(\sigma^{-2}(x-\mu)^2 \right)}{h \left(\sigma^{-2}(x-\mu)^2 \right)}
	\right)^2 \frac{1}{\sigma} h \left( \frac{(x-\mu)^2}{\sigma^2} \right)~\d x\\
	&= \int_{-\infty}^{\infty} \left( \frac{1}{\sigma^2} + \frac{4}{\sigma^2}z^4 \left( \frac{h'(z^2)}{h(z^2)} \right)^2 + \frac{4}{\sigma^2} z^2 \frac{h'(z^2)}{h(z^2)} \right) h(z^2)~\d z\\
	&= \frac{1}{\sigma^2}
	+ \frac{4}{\sigma^2} \int_{-\infty}^{\infty} z^4 \frac{\left[h'(z^2)\right]^2}{h(z^2)}~\d z
	+ \frac{4}{\sigma^2} \int_{-\infty}^{\infty} z^2 h'(z^2)~\d z \\
	&=\frac{4}{\sigma^2} \int_{-\infty}^{\infty} z^4 \frac{\left[h'(z^2)\right]^2}{h(z^2)}~\d z-\frac{1}{\sigma^2}.
\end{align*}

The Fisher matrix for elliptical distributions has the form
\begin{equation} \label{eq:elliptical-metric}
	G(\mu,\sigma)
	= 
	\begin{pmatrix}
		\frac{a_h}{\sigma^2} & 0\\
		0 & \frac{b_h}{\sigma^2}
	\end{pmatrix},
\end{equation}
where
\begin{equation*}
	a_h \coloneqq 4 \int_{-\infty}^{\infty} z^2 \frac{\left[h'(z^2)\right]^2}{h(z^2)}~\d z,
\end{equation*}
and
\begin{equation*}
	b_h \coloneqq 4\int_{-\infty}^{\infty} z^4 \frac{\left[h'(z^2)\right]^2}{h(z^2)}~\d z - 1.
\end{equation*}

Applying Lemma~\ref{lemma:poincare-isometry} to a statistical manifold formed by univariate elliptical distributions (i.e., for a fixed~$h$) equipped with the metric~\eqref{eq:elliptical-metric}, we get an expression for the Fisher--Rao distance in this manifold given by
\begin{align}\label{eq:fr-distance-elliptical}
	d_{\FR} \big((\mu_1,\sigma_1), (\mu_2, \sigma_2)\big)
	&= \sqrt{b_h} d_{\Hcal^2} \left( (\sqrt{a_h}\mu_1,\sqrt{b_h}\sigma_1),( \sqrt{a_h}\mu_2,\sqrt{b_h}\sigma_2 )\right)\nonumber\\
	&= 2\sqrt{b_h} \arctanh \left( \sqrt{\frac{a_h(\mu_1-\mu_2)^2 + b_h(\sigma_1-\sigma_2)^2}{a_h(\mu_1-\mu_2)^2 + b_h(\sigma_1+\sigma_2)^2}} \right).
\end{align}

\noindent Note that, if $\mu_1 = \mu_2 \eqqcolon \mu$, the expression simplifies to
\begin{align}
	d_{\FR} \big((\mu,\sigma_1), (\mu, \sigma_2)\big)
	&= \sqrt{b_h} \left\vert \log \sigma_1 - \log \sigma_2 \right\vert.
\end{align}

\subsubsection{Gaussian} \label{subsec:gaussian}

A Gaussian distribution~\cite{atkinson1981,arwini2008,burbea1986,calin2014,costa2015,mithcell1988} is characterised by the p.d.f. $p(x) = \frac{1}{\sigma \sqrt{2 \pi}}\exp\left(-\frac{(x-\mu)^2}{2\sigma^2}\right)$, defined for $x \in \R$, and parametrised by the pair $(\mu, \sigma) \in \R \times \R_+^*$. Note that Gaussian distributions are elliptical distribution with $h(u)=\frac{1}{\sqrt{2\pi}}\exp(-u/2)$, $a_h=1$, and $b_h=2$. Thus, by~\eqref{eq:elliptical-metric}, the corresponding Fisher matrix is
\begin{equation} \label{eq:fisher-matrix-gaussian}
	G(\mu,\sigma) = \begin{pmatrix}
		\frac{1}{\sigma^2} & 0\\
		0 & \frac{2}{\sigma^2}
	\end{pmatrix}.
\end{equation}
And, by~\eqref{eq:fr-distance-elliptical}, the Fisher--Rao distance is obtained as 
\begin{align}
	d_{\FR} \big((\mu_1,\sigma_1), (\mu_2, \sigma_2)\big)
	&= 2\sqrt{2} \arctanh \left( \sqrt{\frac{(\mu_1-\mu_2)^2 + 2(\sigma_1-\sigma_2)^2}{(\mu_1-\mu_2)^2 + 2(\sigma_1+\sigma_2)^2}} \right).
\end{align}

\subsubsection{Laplace} \label{subsec:laplace}

A Laplace distribution has p.d.f. $p(x) = \frac{1}{2\sigma}\exp\left( -\frac{|x-\mu|}{\sigma} \right)$, defined for $x \in \R$, and parametrised by $(\mu,\sigma) \in \R \times \R_+^*$. Laplace distributions are elliptical distribution with $h(u)=\frac{1}{2}\exp\left(-\sqrt{u}\right)$, $a_h=1$, and $b_h=1$. Using~\eqref{eq:elliptical-metric}, we get the Fisher matrix as
\begin{equation}
	G(\mu,\sigma) = \begin{pmatrix}
		\frac{1}{\sigma^2} & 0\\
		0 & \frac{1}{\sigma^2}
	\end{pmatrix},
\end{equation}
showing that this metric coincides with the hyperbolic metric~\eqref{eq:hyperbolic-metric} of the Poincaré half-plane. Using~\eqref{eq:fr-distance-elliptical}, we get that the Fisher--Rao distance in this manifold is
\begin{equation}
	d_{\FR} \big( (\mu_1,\sigma_1), (\mu_2,\sigma_2) \big)
	= 2 \arctanh \left( \sqrt{\frac{(\mu_1-\mu_2)^2+(\sigma_1-\sigma_2)^2}{(\mu_1-\mu_2)^2+(\sigma_1+\sigma_2)^2}} \right).
\end{equation}

The particular case of zero-mean Laplace distributions is included in~\cite{verdoolaege2012}.

\subsubsection{Generalised Gaussian} \label{subsec:generalised-gaussian}

A generalised Gaussian distribution\footnote{
	These generalised Gaussian distributions are in a different sense that those considered in~\cite{andai2009}.
}~\cite{dytso2018,fang1990,verdoolaege2012}, also known as exponential power distribution, is characterised by the p.d.f. $p(x) = \frac{\beta}{2\sigma \Gamma(1/\beta)}\exp\left(-\frac{\vert x-\mu\vert^{\beta}}{\sigma}\right)$, defined for $x \in \R$, and parametrised by $(\mu, \sigma) \in \R \times \R_+^*$, for a fixed $\beta>0$. These can be seen as elliptical distributions with $h(u)=\frac{\beta}{2\Gamma(1/\beta)}\exp(-u^{\beta/2})$, $a_h = \beta\frac{\Gamma(2-1/\beta)}{\Gamma(1+1/\beta)}$, and $b_h=\beta$. Using~\eqref{eq:elliptical-metric}, we get the Fisher matrix as
\begin{equation} \label{eq:metrica-generalized-gaussiana}
	G(\mu,\sigma) = \begin{pmatrix}
		\frac{\beta}{\sigma^2}\frac{\Gamma(2-1/\beta)}{\Gamma(1+1/\beta)} & 0\\
		0 & \frac{\beta}{\sigma^2}
	\end{pmatrix},
\end{equation}
and, using~\eqref{eq:fr-distance-elliptical}, we get the Fisher--Rao distance as 
\begin{align}
	&\hspace{-2em}d_{\FR} \big((\mu_1,\sigma_1), (\mu_2, \sigma_2)\big)\nonumber\\
	&=
	\sqrt{\beta+1} \arctanh \left( \sqrt{\frac{\beta(\mu_1-\mu_2)^2\Gamma(2-1/\beta) + (\beta+1)(\sigma_1-\sigma_2)^2\Gamma(1+1/\beta)}{\beta(\mu_1-\mu_2)^2\Gamma(2-1/\beta) + (\beta+1)(\sigma_1+\sigma_2)^2\Gamma(1+1/\beta)}} \right).
\end{align}

Note that choosing $\beta=2$ yields a Gaussian distribution with mean $\mu$ and variance $\sigma^2/2$, while letting $\beta=1$ corresponds to a Laplace distribution with mean $\mu$ and variance $8\sigma^2$. Multivariate, zero-mean generalised Gaussian distributions have been studied in~\cite{verdoolaege2012}.

\subsubsection{Logistic} \label{subsec:logistic}

A logistic distribution~\cite{oller1987} has p.d.f. $p(x) = \frac{\exp\left(-(x-\mu)/\sigma\right)}{\sigma\left(\exp\left(-(x-\mu)/\sigma\right)+1\right) ^2}$, defined for $x \in \R$ and parametrised by $(\mu, \sigma) \in \R \times \R^*_+$. A logistic distribution is an elliptical distribution with $h(u)=\frac{\exp\left(-\sqrt{u}\right)}{\left(1+\exp\left(-\sqrt{u}\right)\right)^2}$, $a_h=\frac{1}{3}$, and $b_h=\frac{\pi^2+3}{9}$. From~\eqref{eq:elliptical-metric}, we have that the Fisher matrix is
\begin{equation} \label{eq:fisher-matrix-logistic}
	G(\mu,\sigma) = \begin{pmatrix}
		\frac{1}{3\sigma^2} & 0\\
		0 & \frac{\pi^2+3}{9\sigma^2}
	\end{pmatrix},
\end{equation}
and, from~\eqref{eq:fr-distance-elliptical}, the Fisher--Rao distance is
\begin{align}
	d_{\FR}\big( (\mu_1,\sigma_1), (\mu_2,\sigma_2) \big)
	&= \frac{2\sqrt{\pi^2+3}}{3} \arctanh \left( \sqrt{\frac{3(\mu_2-\mu_1)^2+(\pi^2+3)(\sigma_2-\sigma_1)^2}{3(\mu_2-\mu_1)^2+(\pi^2+3)(\sigma_2+\sigma_1)^2}} \right).
\end{align}

\subsubsection{Cauchy} \label{subsec:cauchy}

A Cauchy distribution~\cite{mithcell1988,nielsen2020-cauchy} has p.d.f. $p(x) = \frac{\sigma}{\pi \left[ (x-\mu)^2 + \sigma^2 \right]}$, defined for $x \in \R$, and parametrised by $(\mu, \sigma) \in \R \times \R_+^*$. Cauchy distributions are elliptical distributions by choosing $h(u)=\frac{1}{\pi(1+u)}$, $a_h=1/2$, $b_h=1/2$. Recall that the mean and variance are not defined in this case. From~\eqref{eq:elliptical-metric}, we get its the Fisher matrix
\begin{equation} \label{eq:fisher-matrix-cauchy}
	G(\mu,\sigma) =
	\begin{pmatrix}
		\frac{1}{2\sigma^2} & 0\\
		0 & \frac{1}{2\sigma^2}
	\end{pmatrix},
\end{equation}
and~\eqref{eq:fr-distance-elliptical} gives the Fisher--Rao distance as
\begin{equation}
	d_{\FR}\big( (\mu_1,\sigma_1), (\mu_2, \sigma_2) \big)
	= {\sqrt{2}} \arctanh \left( \sqrt{\frac{(\mu_1-\mu_2)^2+(\sigma_1-\sigma_2)^2}{(\mu_1-\mu_2)^2+(\sigma_1+\sigma_2)^2}} \right).
\end{equation}

\begin{figure}
	\centering
	\begin{subfigure}[b]{0.4\textwidth}
		\centering
		\includegraphics[height=0.1\textheight]{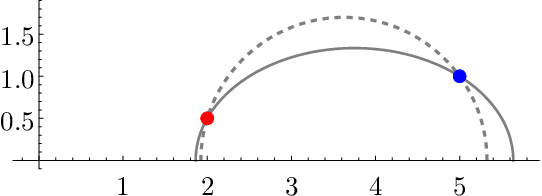}
		\caption{Geodesics in the parameter space $(\mu,\sigma)$.}
	\end{subfigure}
	\hspace{0.08\textwidth}
	\begin{subfigure}[b]{0.2\textwidth}
		\centering
		\includegraphics[height=0.1\textheight]{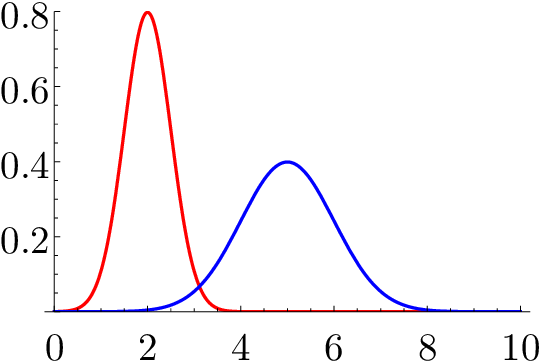}
		\caption{Gaussian densities.}
	\end{subfigure}
	\hspace{0.08\textwidth}
	\begin{subfigure}[b]{0.2\textwidth}
		\centering
		\includegraphics[height=0.1\textheight]{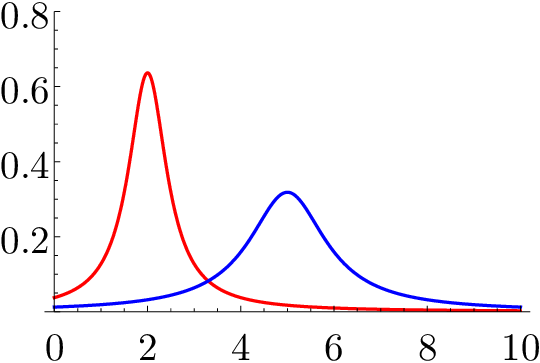}
		\caption{Cauchy densities.}
	\end{subfigure}
	\caption{
		Geodesics joining points $(\mu_1,\sigma_1)=(2,0.5)$ and $(\mu_2,\sigma_2)=(5,1)$ according to Gaussian metric (solid) and Cauchy metric (dashed), seen in the parameter space $(\mu,\sigma)$, and the corresponding densities. The distance between the two Gaussian distributions is $d_{\text{Gaussian}}((\mu_1,\sigma_1),(\mu_2,\sigma_2)) \approx 3.443$, and between the Cauchy distributions is $d_{\text{Cauchy}}((\mu_1,\sigma_1),(\mu_2,\sigma_2)) \approx 1.721$.
	}
	\label{fig:geodesics-elliptical}
\end{figure}

Figure~\ref{fig:geodesics-elliptical} illustrates geodesics between Gaussian and Cauchy distributions.

\subsubsection{Student's $t$} \label{subsec:student-t}

A location-scale Student's $t$ distribution~\cite{mithcell1988} with $\nu \in \N^*$ degrees of freedom generalises the  Cauchy distribution. It has p.d.f. $p(x) = \left(1+ \frac{1}{\nu}(\frac{x-\mu}{\sigma})^2 \right)^{-\frac{\nu+1}{2}} \frac{\Gamma\left((\nu+1)/2\right)}{\sigma\sqrt{\pi\nu}\Gamma(\nu/2)}$, defined for $x \in \R$, and parametrised by $(\mu,\sigma) \in \R \times \R_+^*$. This is an elliptical distribution, with $h(u)=\frac{\Gamma\left((\nu+1)/2\right)}{\sqrt{\pi\nu}\Gamma(\nu/2)} \left(1+\frac{u}{\nu}\right)^{-(\nu+1)/2}$, $a_h=\frac{\nu+1}{\nu+3}$, and $b_h=\frac{2\nu}{\nu+3}$. Then, by~\eqref{eq:elliptical-metric} we obtain the Fisher matrix
\begin{equation}\label{eq:fisher-matrix-student-t}
	G(\mu,\sigma) = 
	\begin{pmatrix} 
		\frac{\nu+1}{(\nu+3)\sigma^2} & 0\\
		0 & \frac{2\nu}{(\nu+3)\sigma^2}
	\end{pmatrix},
\end{equation}
and by~\eqref{eq:fr-distance-elliptical}, the Fisher--Rao distance
\begin{equation}
	d_{\FR}\big( (\mu_1,\sigma_1), (\mu_2,\sigma_2) \big)
	=2\sqrt{\frac{2\nu}{\nu+3}} \arctanh \left( \sqrt{\frac{(\nu+1)(\mu_2-\mu_1)^2+2\nu(\sigma_2-\sigma_1)^2}{(\nu+1)(\mu_2-\mu_1)^2+2\nu(\sigma_2+\sigma_1)^2}} \right).
\end{equation}

\begin{remark}
	We close this subsection with a remark on the general case of multivariate elliptical distributions. These are distributions of the form $p(\xv) = (\det \Sigma)^{-1/2} h \left( (\pmb{x}-\mub)^\t \Sigma^{-1} (\pmb{x} - \mub) \right)$, for some function~$h \colon \R_+ \to \R_+$, defined for $\xv \in \R^n$, and characterised by a vector $\mub \in \R^n$ and an $n \times n$ positive-definite symmetric matrix $\Sigma \in P_n(\R)$. Analogously to the univariate case, the set of elliptical distributions for a fixed~$h$ forms an $\left(n + \frac{n(n+1)}{2}\right)$-dimensional statistical manifold. In the general case, however, no general closed-form expression for the Fisher--Rao distance is known; instead, only expressions for particular cases and bounds for this distance are available~\cite{berkane1997,calvo2002,chen2021,mithcell1988,mitchell1985}.
	Multivariate Gaussian distributions $p(\pmb{x}) =  \left((2\pi)^n \det \Sigma\right)^{-1/2} \exp \left( - \frac{1}{2} (\pmb{x}-\mub)^\t \Sigma^{-1} (\pmb{x} - \mub) \right)$ have been particularly studied~\cite{atkinson1981,burbea1986,calvo1990,calvo1991,krzanowski1996,pinele2020,skovgaard1984}. Special cases for which the Fisher--Rao distance can be written include: fixed mean, fixed covariance, diagonal covariance matrix, and mirrored distributions. The case of diagonal covariance matrix is equivalent to independent components and will be treated in Section~\ref{sec:product-distributions}. Special cases for the multivariate generalised Gaussian distributions have been studied in~\cite{verdoolaege2012}. The Fisher--Rao distance between zero-mean complex elliptically symmetric distributions has been computed in~\cite{bouchard2023,breloy2019}.
\end{remark}

\subsection{Log-Gaussian} \label{subsec:log-gaussian}

A log-Gaussian distribution~\cite{arwini2008,calin2014} is the distribution of a random variable whose logarithm follows a Gaussian distribution. It has p.d.f. $p(x) = \frac{1}{\sigma x\sqrt{2\pi}} \exp \left( - \frac{(\log x - \mu)^2}{2\sigma^2} \right)$, defined for $x \in \R_+^*$, and parametrised by $(\mu,\sigma) \in \R \times \R_+^*$. In this case, we have $\partial_{\mu} \ell \coloneqq \partial_{\mu} \ell(\mu,\sigma) = \frac{\log x - \mu}{\sigma^2}$ and $\partial_{\sigma} \ell \coloneqq \partial_{\sigma} \ell(\mu,\sigma) = -\frac{1}{\sigma} + \frac{(\log x-\mu)^2}{\sigma^3}$, so that the elements of the Fisher matrix are
\begin{align*}
	g_{11} &= \E \left[ \left( \partial_{\mu}\ell \right)^2 \right]
	= \E \left[ \left( \frac{\log X-\mu}{\sigma^2} \right)^2 \right]\\
	&= \frac{\E[(\log X-\mu)^2]}{\sigma^4}
	= \frac{1}{\sigma^2},\\
	g_{12} = g_{21} &= \E \left[ \left(\partial_\mu \ell\right) \left(\partial_\sigma \ell\right) \right]
	= \E \left[ \left( \frac{\log X-\mu}{\sigma^2} \right) \left( -\frac{1}{\sigma} + \frac{(\log X-\mu)^2}{\sigma^3} \right) \right]\\
	&= -\frac{\E[\log X-\mu]}{\sigma^3} + \frac{\E[(\log X-\mu)^3]}{\sigma^5}\\
	&= 0,\\
	g_{22} &= \E \left[ \left( \partial_\sigma \ell \right)^2 \right]
	= \E \left[ \left( -\frac{1}{\sigma} + \frac{(\log X-\mu)^2}{\sigma^3} \right)^2 \right]\\
	&= \frac{1}{\sigma^2} - \frac{2 \E[(\log X-\mu)^2]}{\sigma^4} + \frac{\E[(\log X-\mu)^4]}{\sigma^6}
	= \frac{2}{\sigma^2},
\end{align*}

\noindent having used that $\E[\log X] = \mu$, $\E[\log X - \mu] = \E[(\log X - \mu)^3] = 0$, $\E[(\log X-\mu)^2] = \sigma^2$, and $\E[(\log X-\mu)^4]=3\sigma^4$. Thus, the Fisher matrix is given by
\begin{equation} \label{eq:metrica-log-gaussiana}
	G(\mu,\sigma) = \begin{pmatrix}
		\frac{1}{\sigma^2} & 0\\
		0 & \frac{2}{\sigma^2}
	\end{pmatrix}.
\end{equation}

\noindent This is the same as for the Gaussian manifold~\eqref{eq:fisher-matrix-gaussian}, therefore the Fisher--Rao distance the same, namely,
\begin{equation}
	d_\FR \big((\mu_1,\sigma_1), (\mu_2, \sigma_2)\big)
	= 2\sqrt{2} \arctanh \left( \sqrt{\frac{(\mu_1-\mu_2)^2 + 2(\sigma_1-\sigma_2)^2}{(\mu_1-\mu_2)^2 + 2(\sigma_1+\sigma_2)^2}} \right).
\end{equation}

\subsection{Inverse Gaussian}

An inverse Gaussian distribution ~\cite{khan2022,villarroya1993} has p.d.f. $p(x) = \sqrt{\frac{\lambda}{2\pi x^3}} \exp \left( - \frac{\lambda(x - \mu)^2}{2\mu^2 x} \right)$, defined for $x \in \R_+^*$, and parametrised by $(\lambda,\mu) \in \R_+^* \times \R_+^*$. In this case, we have $\partial_{\lambda} \ell \coloneqq \partial_{\lambda} \ell(\lambda,\mu) = \frac{1}{2\lambda} - \frac{(x-\mu)^2}{2\mu^2 x}$ and $\partial_{\mu} \ell \coloneqq \partial_{\mu} \ell(\lambda,\mu) = \frac{\lambda( x - \mu)}{\mu^3}$. The elements of the Fisher matrix are then
\begin{align*}
	g_{11} &= \E \left[ \left( \partial_{\lambda}\ell \right)^2 \right]
	=\E \left[ \left( \frac{1}{2\lambda} - \frac{(x-\mu)^2}{2\mu^2 x}\right)^2 \right] \\
	&= \frac{1}{4\lambda^2}+\frac{1}{\lambda\mu}+\frac{3}{2\mu^2}-\left(\frac{1}{2\lambda\mu^2}+\frac{1}{\mu^3}\right)\E[X]\nonumber\\
	&\quad¨-\left(\frac{1}{2\lambda}+\frac{1}{\mu}\right)\E\left[\frac{1}{X}\right]+\frac{1}{4\mu^4}\E\left[X^2\right]+\frac{1}{4}\E\left[\frac{1}{X^2}\right]\\
	&= \frac{1}{2\lambda^2},\\
	g_{12} = g_{21} &= \E \left[ \left(\partial_{\lambda} \ell\right) \left(\partial_{\mu} \ell\right) \right]
	= \E \left[ \left(\frac{1}{2\lambda} - \frac{(x-\mu)^2}{2\mu^2 x}\right)\left( \frac{\lambda( x - \mu)}{\mu^3} \right)  \right]\\
	&= -\frac{1}{2\lambda\mu^2}-\frac{3}{2\mu^3}+\left(\frac{1}{2\lambda\mu^3}-\frac{3\lambda}{2\mu^4}\right)\E[X]+\frac{\lambda}{2\mu^2}\E\left[\frac{1}{X}\right]-\frac{\lambda}{2\mu^5}\E\left[X^2\right]\\
	&= 0,\\
	g_{22} &= \E \left[ \left( \partial_{\mu} \ell \right)^2 \right]
	= \E \left[ \left( \frac{\lambda( x - \mu)}{\mu^3} \right)^2 \right] \\
	&= \frac{\lambda^2}{\mu^4}-\frac{2\lambda^2}{\mu^5}\E[X]+\frac{\lambda^2}{\mu^6}\E\left[X^2\right]\\
	&= \frac{\lambda}{\mu^3},
\end{align*}

\noindent where we have used that $\E[X] = \mu$, $\E\left[X^2\right] = \frac{\mu^3}{\lambda}+\mu^2$, $\E\left[\frac{1}{X}\right] = \frac{1}{\lambda}+\frac{1}{\mu}$ and $\E\left[\frac{1}{X^2} \right] = \frac{3}{\lambda^2}+\frac{3}{\lambda\mu}+\frac{1}{\mu^2}$. Thus, the Fisher matrix is given by
\begin{equation} \label{eq:fisher-matrix-inverse-gaussian}
	G(\lambda,\mu) = \begin{pmatrix}
		\frac{1}{2\lambda^2} & 0\\
		0 & \frac{\lambda}{\mu^3}
	\end{pmatrix}.
\end{equation}

To find the Fisher--Rao distance, we consider the change of coordinates $u=1/\sqrt{\lambda}$, $v=\sqrt{2/\mu}$. Applying Proposition~\ref{prop:reparametrisation-parameter-space} we find that the Fisher matrix in the new coordinates
\begin{equation*}
	\widetilde{G}(u,v) =
	\begin{pmatrix}
		\frac{2}{u^2} & 0\\
		0 & \frac{2}{u^2}
	\end{pmatrix}
	=2 \begin{pmatrix}
		\frac{1}{u^2} & 0\\
		0 & \frac{1}{u^2}
	\end{pmatrix}.
\end{equation*}
Applying Lemma~\ref{lemma:poincare-isometry} yields
\begin{align}
	d_{\FR}\big( (\lambda_1,\mu_1), (\lambda_2,\mu_2) \big)
	&= \sqrt{2}d_{\Hcal^2} \left( \left(1/\sqrt{\lambda_1}, \sqrt{2/\mu_1}\right), \left(1/\sqrt{\lambda_2},\sqrt{2/\mu_2}\right)\right)\nonumber\\
	&= 2\sqrt{2} \arctanh \left(
	\sqrt{\frac{
			\mu_1\mu_2(\sqrt{\lambda_1}-\sqrt{\lambda_2})^2+2\lambda_1\lambda_2(\sqrt{\mu_1}-\sqrt{\mu_2})^2
		}{
			\mu_1\mu_2(\sqrt{\lambda_1}-\sqrt{\lambda_2})^2+2\lambda_1\lambda_2(\sqrt{\mu_1}+\sqrt{\mu_2})^2}
	}
	\right).
\end{align}

\subsection{Extreme-value distributions}

Extreme-value distributions~\cite{kotz2000} are limit distribution for the maxima (or minima) of a sequence of i.i.d. random variables. They are usually considered to be of one of three families, all of which can be described in the form of the generalised extreme-value distributions:
\begin{equation} \label{eq:generalised-extreme-value}
	p(x) = \frac{1}{\sigma} {[t(x,\xi)]}^{\xi+1} e^{-t(x,\xi)},
\end{equation}
where $\sigma>0$, $\xi\in\R$, and
\begin{equation*}
	t(x, \xi) =
	\begin{cases}
		\left( 1 + \xi \left(\frac{x-\mu}{\sigma}\right) \right)^{-\frac{1}{\xi}}, &\xi\neq0,\\
		\exp\left(-\frac{x-\mu}{\sigma}\right), &\xi=0,
	\end{cases}
\end{equation*}
for $\mu\in\R$. Note that $t(x,\xi)$ is continuous on $\xi=0$, for every $x\in\R$. When $\xi=0$, the support of \eqref{eq:generalised-extreme-value} is $x\in\R$, and it is called type I or \emph{Gumbel-type} distribution; when $\xi>0$, the support is $x \in \left[\mu-\frac{\sigma}{\xi},\ +\infty\right[$, and it is called type II or \emph{Fréchet-type} distribution; when $\xi<0$, the support is $x \in \left]-\infty,\ \mu-\frac{\sigma}{\xi}\right]$, and it is called type III or \emph{Weibull-type} distribution. Therefore, in general, these distributions are parametrised by the triple $(\mu,\sigma,\xi) \in \R \times \R^*_+ \times \R$. Instead of treating the three-dimensional manifold of generalised extreme-value distributions in full generality~\cite{prescott1980,taylor2019}, for which no closed-form expression for the Fisher--Rao is available, we consider the usual two-dimensional versions of them, following~\cite{oller1987}.

\subsubsection{Gumbel} \label{subsec:gumbel}

A Gumbel distribution~\cite{oller1987} has p.d.f. $p(x) = \frac{1}{\sigma} \exp\left(-\frac{x-\mu}{\sigma}\right) \exp\left(-\exp\left(-\frac{x-\mu}{\sigma}\right)\right) $, defined for $x \in \R$, and parametrised by $(\mu, \sigma) \in \R \times \R^*_+$. This corresponds to~\eqref{eq:generalised-extreme-value} taking $\xi=0$. In this case, we have $\partial_{\mu} \ell \coloneqq \partial_{\mu} \ell(\mu,\sigma) = \frac{1}{\sigma} -\frac{1}{\sigma}\exp\left(-\frac{x-\mu}{\sigma}\right) $ and $\partial_{\sigma} \ell \coloneqq \partial_{\sigma} \ell(\mu,\sigma) =- \frac{x-\mu}{\sigma^2}\exp\left(-\frac{x-\mu}{\sigma}\right)+\frac{x-\mu}{\sigma^2}-\frac{1}{\sigma}$. Denoting $ Z \coloneq \frac{X-\mu}{\sigma}$, the elements of the Fisher matrix can be written as
\begin{align*}
	g_{11}
	&= \E \left[ \left( \partial_{\mu}\ell \right)^2 \right]
	= \E \left[ \left( \frac{1}{\sigma} -\frac{1}{\sigma}e^{-Z} \right)^2 \right]\\
	&= \frac{1}{\sigma^2} - \frac{2}{\sigma^2}\E\left[e^{-Z}\right] + \frac{1}{\sigma^2}\E\left[e^{-2Z}\right]\\
	&= \frac{1}{\sigma^2},\\
	g_{12} = g_{21}
	&= \E \left[ \left(\partial_{\mu} \ell\right) \left(\partial_{\sigma} \ell\right) \right]\\
	&= \E \left[ \left(\frac{1}{\sigma} -\frac{1}{\sigma} e^{-Z} \right) \left( \frac{Ze^{-Z}}{\sigma} + \frac{Z}{\sigma}+\frac{1}{\sigma^2}\right)\right]\\
	&= -\frac{2}{\sigma^2} \E \left[Ze^{-Z}\right]+\frac{1}{\sigma^2} \E \left[Z\right]-\frac{1}{\sigma^2} +\frac{1}{\sigma^2}\E \left[Ze^{-2Z}\right]+\frac{1}{\sigma^2} \E \left[e^{-Z}\right]\\
	&=\frac{\gamma -1}{\sigma^2},\\
	g_{22}
	&= \E \left[ \left( \partial_{\sigma} \ell \right)^2 \right]
	= \E \left[ \left( \frac{Ze^{-Z}}{\sigma} + \frac{Z}{\sigma} + \frac{1}{\sigma^2}\right)^2 \right]\\
	&= \frac{1}{\sigma^2}\E \left[Z^2 e^{-2Z}\right] - \frac{2}{\sigma^2}\E \left[Z^2 e^{-Z}\right]+\frac{2}{\sigma^2}\E \left[Ze^{-Z}\right] +\frac{1}{\sigma^2}\E \left[Z^2\right]-\frac{2}{\sigma^2}\E \left[Z\right]+\frac{1}{\sigma^2}\\
	&= \frac{1}{\sigma^2}\left((\gamma-1)^2+\frac{\pi^2}{6}\right),
\end{align*}

\noindent where $\gamma$ is the Euler constant and we have used that $\E \left[Z\right]=\gamma$, $\E \left[e^{-Z}\right]=1$, $\E \left[e^{-2Z}\right]=2$, $\E \left[Ze^{-Z}\right]=\gamma-1$, $\E \left[Ze^{-2Z}\right]=2\gamma-3$, $\E \left[Z^2e^{-2Z}\right]=2\gamma^2-6\gamma+2+\frac{\pi^2}{3}$, $\E \left[Z^2e^{-Z}\right]=\gamma^2-2\gamma+\frac{\pi^2}{6}$ and $\E \left[Z^2\right]=\gamma^2+\frac{\pi^2}{6}$. Thus, the Fisher matrix is
\begin{equation} \label{eq:fisher-matrix-gumbel}
	G(\mu,\sigma) = \begin{pmatrix}
		\frac{1}{\sigma^2} & \frac{\gamma-1}{\sigma^2}\\
		\frac{\gamma-1}{\sigma^2} & \frac{1}{\sigma^2}\left((\gamma-1)^2+\frac{\pi^2}{6}\right)
	\end{pmatrix}.
\end{equation}

To find the Fisher--Rao distance, we consider the change of coordinates $u=\mu-(1-\gamma)\sigma$, $v=\pi\sigma/\sqrt{6}$. Applying Proposition~\ref{prop:reparametrisation-parameter-space} we find that the Fisher matrix in the new coordinates is
\begin{equation*}
	\widetilde{G}(u,v) =
	\begin{pmatrix}
		\frac{\pi^2}{6v^2} & 0\\
		0 & \frac{\pi^2}{6v^2}
	\end{pmatrix}
	=\frac{\pi^2}{6} \begin{pmatrix}
		\frac{1}{v^2} & 0\\
		0 & \frac{1}{v^2}.
	\end{pmatrix}
\end{equation*}

\noindent Then, applying Lemma~\ref{lemma:poincare-isometry}, the Fisher--Rao distance is given by
\begin{align} \label{eq:fr-distance-gumbel}
	&d_{\FR}\big( (\mu_1,\sigma_1), (\mu_2,\sigma_2) \big)\nonumber\\
	&= \frac{\pi}{\sqrt{6}}d_{\Hcal^2} \left(\left( \mu_1-(1-\gamma)\sigma_1, \frac{\pi}{\sqrt{6}}\sigma_1\right), \left(\mu_2-(1-\gamma)\sigma_2, \frac{\pi}{\sqrt{6}}\sigma_2\right)\right)\nonumber\\
	&= \frac{2\pi}{\sqrt{6}} \arctanh \left( \sqrt{
		\frac{
			\left[(\mu_1-\mu_2)-(1-\gamma)(\sigma_1-\sigma_2)\right]^2+\frac{\pi^2}{6}(\sigma_1-\sigma_2)^2
		}{
			\left[(\mu_1-\mu_2)-(1-\gamma)(\sigma_1-\sigma_2)\right]^2+\frac{\pi^2}{6}(\sigma_1+\sigma_2)^2
		}
	} \right).
\end{align}

\subsubsection{Fréchet} \label{subsec:frechet}

A Fréchet distribution~\cite{oller1987} has p.d.f. $p(x) = \frac{\lambda}{\beta} \left( \frac{x}{\beta} \right)^{-\lambda-1} \exp \left( - \left(\frac{x}{\beta}\right)^{-\lambda} \right)$, defined for $x \in \R_+^*$, and parametrised by $(\beta,\lambda) \in \R_+^* \times \R_+^*$. Note that this corresponds to~\eqref{eq:generalised-extreme-value} taking $\mu = \beta$, $\sigma = \beta/\lambda$ and $\xi = 1/\lambda$. This distribution can be related to the Gumbel distribution by considering the reparametrisation of the sample space $Y \coloneqq \log X$, which preserves the Fisher metric (Proposition~\ref{prop:reparametrisation-sample-space}). The p.d.f. of the new random variable is then
\begin{align*}
	p(y)
	= \frac{1}{\left|\frac{\d y}{\d x}\right|} \frac{\lambda}{\beta} \left( \frac{e^y}{\beta} \right)^{-\lambda-1} \exp \left( -\left( \frac{e^y}{\beta} \right)^{-\lambda} \right)
	= \lambda \left(\frac{e^y}{\beta}\right)^{-\lambda} \exp\left( - \left(\frac{e^y}{\beta}\right)^{-\lambda} \right).
\end{align*}

\noindent Now, considering the change of coordinates $\alpha = \log \beta$, $\theta = 1/\lambda$, we find
\begin{equation*}
	p(y) = \frac{1}{\theta} \exp\left( - \frac{y-\alpha}{\theta} \right) \exp \left( - \left( -\frac{y-\alpha}{\theta} \right) \right),
\end{equation*}
which coincides with that of a Gumbel distribution with parameters $(\alpha,\theta) \in \R \times \R^*_+$. Comparing with the Fisher matrix~\eqref{eq:fisher-matrix-gumbel} and  applying Proposition~\ref{prop:reparametrisation-parameter-space} we find that the Fisher matrix in the $(\beta,\lambda)$ coordinates is
\begin{equation} \label{eq:fisher-matrix-frechet}
	G(\beta,\lambda) =
	\begin{pmatrix}
		\frac{\lambda^2}{\beta^2} & \frac{1-\gamma}{\beta}\\
		\frac{1-\gamma}{\beta} & \frac{1}{\lambda^2}\left((\gamma-1)^2+\frac{\pi^2}{6}\right)
	\end{pmatrix}.
\end{equation}

Note that, by considering the change of coordinates $u=\log\beta-(1-\gamma)/\lambda$, $v=\pi/(\lambda\sqrt{6})$ and applying again Proposition~\ref{prop:reparametrisation-parameter-space}, the Fisher matrix is found to be
\begin{equation*}
	\widetilde{G}(u,v) =
	\begin{pmatrix}
		\frac{\pi^2}{6v^2} & 0\\
		0 & \frac{\pi^2}{6v^2}
	\end{pmatrix}
	=\frac{\pi^2}{6} \begin{pmatrix}
		\frac{1}{v^2} & 0\\
		0 & \frac{1}{v^2}.
	\end{pmatrix},
\end{equation*}

\noindent Therefore, by Lemma~\ref{lemma:poincare-isometry} the Fisher--Rao distance for Fréchet distributions is

\begin{align}
	&d_{\FR}\big( (\beta_1,\lambda_1), (\beta_2,\lambda_2) \big)	\nonumber\\
	&= \frac{\pi}{\sqrt{6}}d_{\Hcal^2}\left( \left(\log\beta_1-\frac{(1-\gamma)}{\lambda_1 }, \frac{\pi}{\lambda_1\sqrt{6}}\right), \left(\log\beta_2-\frac{(1-\gamma)}{\lambda_2 }, \frac{\pi}{\lambda_2\sqrt{6}}\right)\right)\nonumber\\
	&= \frac{2\pi}{\sqrt{6}} \arctanh \left( \sqrt{
		\frac{
			\left[\log\frac{\beta_1}{\beta_2}-(1-\gamma)\left(\frac{1}{\lambda_1}-\frac{1}{\lambda_2}\right)\right]^2+\frac{\pi^2}{6}\left(\frac{1}{\lambda_1}-\frac{1}{\lambda_2}\right)^2
		}{
			\left[\log\frac{\beta_1}{\beta_2}-(1-\gamma)\left(\frac{1}{\lambda_1}-\frac{1}{\lambda_2}\right)\right]^2+\frac{\pi^2}{6}\left(\frac{1}{\lambda_1}+\frac{1}{\lambda_2}\right)^2
		}
	} \right).
\end{align}

\subsubsection{Weibull} \label{subsec:weibull}

A Weibull distribution~\cite{burbea1986,oller1987,wauters1993} has p.d.f. $p(x) = \frac{\lambda}{\beta} \left( \frac{x}{\beta} \right)^{\lambda-1} \exp \left( - \left(\frac{x}{\beta}\right)^{\lambda} \right)$, defined for $x \in \R_+^*$, and parametrised by $(\beta,\lambda) \in \R_+^* \times \R_+^*$. Note that this corresponds to the distribution of $-X$ in~\eqref{eq:generalised-extreme-value} taking $\mu = -\beta$, $\sigma = \beta/\lambda$ and $\xi = -1/\lambda$. It is possible to relate a Weibull distribution to Gumbel by considering the reparametrisation of the sample space $Y \coloneqq - \log X$, which preserves the Fisher metric, (Proposition~\ref{prop:reparametrisation-sample-space}). The p.d.f. of the new random variable is
\begin{align*}
	p(y)
	= \frac{1}{\left|\frac{\d y}{\d x}\right|} \frac{\lambda}{\beta} \left( \frac{e^{-y}}{\beta} \right)^{\lambda-1} \exp \left( -\left( \frac{e^{-y}}{\beta} \right)^{\lambda} \right)
	= \lambda \left(\frac{e^{-y}}{\beta}\right)^{\lambda} \exp\left( - \left(\frac{e^{-y}}{\beta}\right)^{\lambda} \right).
\end{align*}

\noindent Moreover, with the change of coordinates $\lambda = 1/\theta$, $\alpha = -\log \beta$, we have
\begin{equation*}
	p(y) = \frac{1}{\theta} \exp\left( - \frac{y-\alpha}{\theta} \right) \exp \left( - \exp \left( -\frac{y-\alpha}{\theta} \right) \right),
\end{equation*}
which coincides with a a Gumbel distribution with parameters $(\alpha,\theta) \in \R \times \R^*_+$. Again, comparing with the Fisher matrix~\eqref{eq:fisher-matrix-gumbel} and  applying Proposition~\ref{prop:reparametrisation-parameter-space} we find that the Fisher matrix in the $(\beta,\lambda)$ coordinates is
\begin{equation} \label{eq:fisher-matrix-weibull}
	G(\beta,\lambda) =
	\begin{pmatrix}
		\frac{\lambda^2}{\beta^2} & \frac{\gamma-1}{\beta}\\
		\frac{\gamma-1}{\beta} & \frac{1}{\lambda^2}\left((\gamma-1)^2+\frac{\pi^2}{6}\right)
	\end{pmatrix}.
\end{equation}

\noindent The change of coordinates $u=-\log\beta-(1-\gamma)/\lambda$, $v=\pi/(\lambda\sqrt{6})$ with Proposition~\ref{prop:reparametrisation-parameter-space} yields the following form for the Fisher matrix:
\begin{equation*}
	\widetilde{G}(u,v) =
	\begin{pmatrix}
		\frac{\pi^2}{6v^2} & 0\\
		0 & \frac{\pi^2}{6v^2}
	\end{pmatrix}
	=\frac{\pi^2}{6} \begin{pmatrix}
		\frac{1}{v^2} & 0\\
		0 & \frac{1}{v^2}
	\end{pmatrix}.
\end{equation*}

\noindent In addition, by Lemma~\ref{lemma:poincare-isometry}, the Fisher--Rao distance for Weibull distributions is
\begin{align}
	&d_{\FR}\big( (\beta_1,\lambda_1), (\beta_2,\lambda_2) \big)	\nonumber\\
	&= \frac{\pi}{\sqrt{6}}d_{\Hcal^2} \left( \left(-\log\beta_1-\frac{(1-\gamma)}{\lambda_1 }, \frac{\pi}{\lambda_1\sqrt{6}}\right), \left(-\log\beta_2-\frac{(1-\gamma)}{\lambda_2 }, \frac{\pi}{\lambda_2\sqrt{6}}\right)\right)\nonumber\\
	&= \frac{2\pi}{\sqrt{6}} \arctanh \left( \sqrt{
		\frac{
			\left[\log\frac{\beta_2}{\beta_1}-(1-\gamma)\left(\frac{1}{\lambda_1}-\frac{1}{\lambda_2}\right)\right]^2+\frac{\pi^2}{6}\left(\frac{1}{\lambda_1}-\frac{1}{\lambda_2}\right)^2
		}{
			\left[\log\frac{\beta_2}{\beta_1}-(1-\gamma)\left(\frac{1}{\lambda_1}-\frac{1}{\lambda_2}\right)\right]^2+\frac{\pi^2}{6}\left(\frac{1}{\lambda_1}+\frac{1}{\lambda_2}\right)^2
		}
	} \right).
\end{align}

The special case of fixed $\lambda$ has been addressed in~\cite{burbea1986}.

\begin{remark}
	If $X$ is a random variable following a Weibull distribution, then $-X$ follows a reversed Weibull distribution, which corresponds to the Weibull-type distribution from~\eqref{eq:generalised-extreme-value}, and has the same geometry, and same Fisher--Rao distance as the Weibull distribution~\cite{oller1987}.
\end{remark}

\subsection{Pareto} \label{subsec:pareto}

A Pareto distribution~\cite{burbea1986,li2022} has p.d.f. $p(x) = {\theta}\alpha^\theta{x^{-(\theta+1)}}$, defined for $x \in \left[\alpha,\infty\right[$ and parametrised by $(\theta, \alpha) \in \R^*_+ \times \R^*_+$.
In this case, the support depends on the parametrisation, thus violating one of the assumptions made in the definition of a statistical manifold~\eqref{eq:statistical-model}. Nevertheless, it is still possible\footnote{
	As noted in~\cite{li2022}, what is not possible is to use the alternative expression~\eqref{eq:fisher-matrix-hessian}, which would result in a `fake metric'.
} to compute a Riemannian metric from the Fisher information matrix, as in~\eqref{eq:fisher-matrix-gij}. We thus have $\partial_{\theta} \ell \coloneqq \partial_{\theta} \ell(\theta,\alpha) = \frac{1}{\theta} + \log \alpha - \log x$ and $\partial_{\alpha} \ell \coloneqq \partial_{\alpha} \ell(\theta,\alpha) = \frac{\theta}{\alpha}$.
The elements of the Fisher matrix are
\begin{align*}
	g_{11}
	&= \E \left[ \left( \partial_{\theta}\ell \right)^2 \right]
	= \E \left[ \left( \frac{1}{\theta} + \log \alpha - \log X \right)^2 \right]\\
	&= \frac{1}{\theta^2} + (\log \alpha)^2 + \E\left[ (\log X)^2 \right] + \frac{2}{\theta}\left( \log \alpha - \E[\log X] \right) - 2 \log \alpha \E[\log X]\\
	&= \frac{1}{\theta^2},\\
	g_{12} = g_{21}
	&= \E \left[ \left(\partial_\theta \ell\right) \left(\partial_{\alpha} \ell\right) \right]
	= \E \left[ \left( \frac{1}{\theta} + \log \alpha - \log X \right) \left( \frac{\theta}{\alpha} \right) \right]\\
	&= \frac{1}{\alpha} + \frac{\theta}{\alpha}\log \alpha - \frac{\theta}{\alpha} \E[\log X]
	= 0,\\
	g_{22}
	&= \E \left[ \left( \partial_{\alpha} \ell \right)^2 \right]
	= \E \left[ \left( \frac{\theta}{\alpha} \right)^2 \right]\\
	&= \frac{\theta^2}{\alpha^2},
\end{align*}

\noindent where we have used that $\E[\log X] = \frac{1}{\theta} + \log \alpha$ and $\E[(\log X)^2] = \frac{2}{\theta^2} + \frac{2 \log \alpha}{\theta} + (\log \alpha)^2$. Thus, the Fisher matrix is
\begin{equation} \label{eq:fisher-matrix-pareto}
	G(\theta,\alpha) = \begin{pmatrix}
		\frac{1}{\theta^2} & 0\\
		0 & \frac{\theta^2}{\alpha^2}
	\end{pmatrix}.
\end{equation}

To find the Fisher--Rao distance, we consider the change of coordinates $u=\log \alpha$, $v=1/\theta$. Applying Proposition~\ref{prop:reparametrisation-parameter-space} we find that the Fisher matrix in the new coordinates
\begin{equation*}
	\widetilde{G}(u,v)=
	\begin{pmatrix}
		\frac{1}{v^2} & 0\\
		0 & \frac{1}{v^2}
	\end{pmatrix},
\end{equation*}
which coincides with the hyperbolic metric~\eqref{eq:hyperbolic-metric} restricted to the positive quadrant. Therefore the Fisher--Rao distance is given by
\begin{align} \label{eq:distance-pareto}
	d_{\FR}\big( (\theta_1,\alpha_1), (\theta_2, \alpha_2) \big)
	&= d_{\Hcal^2} \left( \left(\log \alpha_1, 1/{\theta_1}\right), \left(\log \alpha_2, 1/{\theta_2} \right) \right)\nonumber\\
	&= 2 \arctanh \left( \sqrt{\frac{(\theta_1\theta_2\log(\alpha_1/\alpha_2))^2+(\theta_1-\theta_2)^2}{(\theta_1\theta_2\log(\alpha_1/\alpha_2))^2+(\theta_1+\theta_2)^2}} \right).
\end{align}

The special case of fixed $\alpha$ has been addressed in~\cite{burbea1986}.

\subsection{Power function} \label{subsec:power-function}

A power function distribution~\cite{burbea1986} has p.d.f. $p(x) = \theta \beta^{-\theta} x^{\theta-1}$, defined for $x \in \left]0,\beta\right]$, and parametrised by $(\theta,\beta) \in \R_+^* \times \R_+^*$. As in the previous example, the support depends on the parametrisation, but it is still possible to consider the Fisher metric as in~\eqref{eq:fisher-matrix-gij}. This distribution can be related to the Pareto distribution (\S~\ref{subsec:pareto}) as follows. Consider the reparametrisation of the sample space given by $Y \coloneqq 1/X$ (cf. Proposition~\ref{prop:reparametrisation-sample-space}), and the change of coordinates $\alpha = 1/\beta$. Note that, since $x \in \left]0,\beta\right]$, we have $y \in \left[\alpha,\infty\right[$.  The p.d.f. of the new random variable, with the new coordinates, is
\begin{equation*}
	p(y) = \frac{1}{\left| \frac{\d y}{\d x} \right|} \theta \beta^{-\theta} {y}^{-(\theta-1)} = \theta \beta^{-\theta} y^{-(\theta+1)} = \theta \alpha^{\theta} y^{-(\theta+1)}.
\end{equation*}
which coincides with a Pareto distribution with parameters $(\theta,\alpha)$. Therefore, applying Proposition~\ref{prop:reparametrisation-parameter-space}, we find
\begin{equation}
	G(\theta,\beta) = \begin{pmatrix}
		\frac{1}{\theta^2} & 0\\
		0 & \frac{\theta^2}{\beta^2}
	\end{pmatrix},
\end{equation}
and therefore
\begin{align}
	d_{\FR}\big( (\theta_1,\beta_1), (\theta_2, \beta_2) \big)
	&= d_{\Hcal^2} \left( \left(\log \beta_1, 1/{\theta_1} \right), \left( \log \beta_2, 1/{\theta_2} \right) \right)\nonumber\\
	&= 2 \arctanh \left( \sqrt{\frac{(\theta_1\theta_2\log(\beta_{1}/\beta_{2}))^2+(\theta_1-\theta_2)^2}{(\theta_1\theta_2\log(\beta_{1}/\beta_{2}))^2+(\theta_1+\theta_2)^2}} \right).
\end{align}

The special case of fixed $\alpha$ has been addressed in~\cite{burbea1986}.

\begin{remark}
	All the examples of two-dimensional statistical manifolds of continuous distributions presented so far in this section are related to the hyperbolic Poincaré half-plane, and have constant negative curvature. However, there are examples of two-dimensional statistical manifolds which are not of constant negative curvature, even if we do not have an explicit expression for the Fisher--Rao distances. We present some examples in the following.
	\begin{itemize}
		\item The statistical manifold of Gamma distributions~ $p(x) = \frac{\beta^{\alpha}}{\Gamma(\alpha)}x^{\alpha-1}e^{-\beta x}$, defined for $x \in \R_+^*$ and parametrised by $(\alpha,\beta) \in \R_+^* \times \R_+^*$. The curvature of this manifold is~\cite{burbea2002,lauritzen1987,reverter2003}
		\begin{equation*}
			\kappa = \frac{\psi^{(1)}(\alpha) + \alpha \psi^{(2)}(\alpha)}{4\left(\alpha \psi^{(1)}(\alpha) -1\right)^2} < 0,
		\end{equation*}
		which is negative, but not constant, and where $\psi^{(m)}(x) \coloneqq \frac{\d^{m+1}}{\d x^{m+1}}\log \Gamma(x)$ denotes the polygamma function of order~$m$. Bounds for the Fisher--Rao distance in this manifold have been studied in~\cite{burbea2002}.
		
		\item The statistical manifold of Beta distributions $p(x) = \frac{\Gamma(\alpha+\beta)}{\Gamma(\alpha)\Gamma(\beta)} x^{\alpha-1} (1-x)^{\beta-1}$, defined for $x \in [0,1]$ and parametrised by $(\alpha,\beta) \in \R_+^* \times \R_+^*$. The curvature of this manifold is~\cite{lebrigant2021-beta}
		\begin{align*}
			\kappa
			&=
			\frac{ \psi^{(2)}(\alpha) \psi^{(2)}(\beta) \psi^{(2)}(\alpha+\beta) }{4 \left( \psi^{(1)}(\alpha)\psi^{(1)}(\beta) - \psi^{(1)}(\alpha+\beta)[\psi^{(1)}(\alpha) + \psi^{(1)}(\beta)] \right)^2}\\
			&\quad\times \left( \frac{\psi^{(1)}(\alpha)}{\psi^{(2)}(\alpha)} + \frac{\psi^{(1)}(\beta)}{\psi^{(2)}(\beta)} - \frac{\psi^{(1)}(\alpha+\beta)}{\psi^{(2)}(\alpha+\beta)} \right)\\
			&< 0,
		\end{align*}
		which is negative, but not constant too. In fact, more generally, the sectional curvature of the statistical manifold of Dirichlet distributions (which are the multivariate generalisation of Beta distributions) is negative~\cite[Thm.~6]{lebrigant2021-dirichlet}.
		
		\item Finally, a construction of $n$-dimensional statistical manifolds, based on a Hilbert space representation of probability measures, was given in~\cite{minarro1993}. The geometry of these manifolds is spherical, that is, they have constant positive curvature.
	\end{itemize}
\end{remark}

\subsection{Wishart} \label{subsec:wishart}

A Wishart distribution~\cite{ayadi2023,gupta2000} in dimension~$m$, with $n \ge m$ degrees of freedom, $n\in\N$, has p.d.f.
\begin{equation*}
	p(X) = \frac{
		{(\det X )}^{(n-m-1)/2}\exp\left(-\frac{1}{2}\tr(\Sigma^{-1}X)\right)
	}{
		2^{nm/2} (\det \Sigma)^{n/2} \, \Gamma_m(n/2)
	},
\end{equation*}
defined for $X\in P_m(\R)$, and characterised by $\Sigma \in P_m(\R)$, where $\Gamma_m(z) \coloneqq \pi^{\frac{m(m-1)}{4}}\prod_{j=1}^{m} \Gamma\left(z+\frac{1-j}{2}\right)$ denotes the multivariate Gamma function. Note that, for fixed $m,n$, the associated $\left(\frac{m(m+1)}{2}\right)$-dimensional statistical manifold~$\Scal$ is in correspondence with the cone $P_m(\R)$ of symmetric positive-definite matrices via the bijection $\iota \colon P_m(\R) \to \Scal$, given by $\Sigma \mapsto  \frac{{(\det X )}^{(n-m-1)/2}\exp\left(-\frac{1}{2}\tr(\Sigma^{-1}X)\right)}{2^{nm/2} (\det \Sigma)^{n/2} \, \Gamma_m(n/2)}$. Denoting $\sigma_{i,j}$ the $(i,j)$-th entry of the matrix $\Sigma$, we can write the parameter vector as $\xi = \left(\xi^1, \dots, \xi^{m(m+1)/2}\right) = (\sigma_{1,1}, \dots \sigma_{1,m}, \sigma_{2,2}, \dots, \sigma_{2,m}, \dots, \sigma_{m,m})$. We then have $\partial_{i}\ell \coloneqq \partial_{{i}} \ell(\Sigma) = \frac{1}{2}\tr\left(\Sigma^{-1}X\Sigma^{-1} {(\partial_i\Sigma)}\right) - \frac{n}{2}\tr\left(\Sigma^{-1}{(\partial_i\Sigma)}\right)$ and  $\partial_{j}\partial_{i}\ell = - \tr\left(\Sigma^{-1}{(\partial_i\Sigma)}\Sigma^{-1}{(\partial_j\Sigma)}\Sigma^{-1}X\right) + \frac{n}{2}\tr\left(\Sigma^{-1}{(\partial_i\Sigma)}\Sigma^{-1}{(\partial_j\Sigma)}\right)$, where the derivative in $\partial_i \Sigma$ is taken entry-wise. The elements of the Fisher metric are then
\begin{align}
	g_{ij}(\xi)
	&= -\E \left[ \partial_{j}\partial_{i}\ell\right] \nonumber\\
	&= -\E \left[- \tr\left(\Sigma^{-1}{(\partial_i\Sigma)}\Sigma^{-1}{(\partial_j\Sigma)}\Sigma^{-1}X\right) + \frac{n}{2} \tr\left(\Sigma^{-1}{(\partial_i\Sigma})\Sigma^{-1}{(\partial_j\Sigma)}\right) \right] \nonumber\\
	&=\tr\left(\Sigma^{-1}{(\partial_i\Sigma)}\Sigma^{-1}{(\partial_j\Sigma)}\Sigma^{-1}\E[X]\right) - \frac{n}{2}\tr\left(\Sigma^{-1}{(\partial_i\Sigma)}\Sigma^{-1}{(\partial_j\Sigma)}\right) \nonumber\\
	&= \frac{n}{2}\tr\left(\Sigma^{-1}{(\partial_i\Sigma)}\Sigma^{-1}{(\partial_j\Sigma)}\right),
\end{align}
where we have used that $\E[X]=n\Sigma$.

In view of the bijection~$\iota$, the tangent space $T_{p_\xi}\Scal$ can be identified with $H_m$, the set of $m \times m$ symmetric matrices~\cite[Chapter~6]{bhatia2007}. Given two matrices $U, \ \tilde{U} \in H_m$, parametrized as $\theta = \left(\theta ^1, \dots, \theta ^{m(m+1)/2}\right) = \left(u_{1,1}, \dots u_{1,m}, u_{2,2}, \dots, u_{2,m}, \dots, u_{m,m}\right)$ and $\tilde{\theta} = \left(\tilde\theta ^1, \dots, \tilde\theta ^{m(m+1)/2}\right) = (\tilde{u}_{1,1}, \dots \tilde{u}_{1,m}, \tilde{u}_{2,2}, \dots, \tilde{u}_{2,m}, \dots, \tilde{u}_{m,m})$, we shall compute the inner product defined by the Fisher metric, cf.~\eqref{eq:inner-product}. In the following, $\otimes$ denotes the Kronecker product, and, for an $m \times m$ matrix $A$, whose columns are $A_1, A_2, \dots, A_m$, we denote $\vect(A) \coloneqq \begin{pmatrix} A_1^\t & A_2^\t & \cdots &A_m^\t \end{pmatrix}^\t$ the $m^2$-dimensional vector formed by the concatenation of its columns.  Moreover, if $A$ is symmetric, denote $\nu(A) \coloneqq (a_{1,1}, \dots a_{1,m}, a_{2,2}, \dots, a_{2,m}, \dots, a_{m,m})$. We denote $D_{m}$ the unique $m^2 \times \frac{m(m+1)}{2}$ matrix that verifies $D_{m} \nu(A) = \vect(A)$, for any symmetric~$A$~\cite[\S~7]{magnus1986}. We thus have
\begin{align*}
	\langle U, \tilde{U} \rangle_{G(\xi)}
	&= \theta^\t G(\xi) \tilde\theta\\
	&= \sum_{i=1}^{\frac{m(m+1)}{2}} \sum_{j=1}^{\frac{m(m+1)}{2}} g_{ij}(\xi) \theta^i \tilde\theta^j\\
	&= \sum_{i=1}^{\frac{m(m+1)}{2}} \sum_{j=1}^{\frac{m(m+1)}{2}} \frac{n}{2} \tr\left(\Sigma^{-1}{(\partial_i\Sigma)}\Sigma^{-1}{(\partial_j\Sigma)}\right) \theta^i \tilde\theta^j\\
	&= \frac{n}{2} \tr\left(\Sigma^{-1}U\Sigma^{-1} \tilde{U} \right)\\
	&=\frac{n}{2} \vect(U)(\Sigma^{-1}\otimes\Sigma^{-1})\vect(\tilde{U})\\
	&=\frac{n}{2}\theta^\t D_{m}^\t (\Sigma^{-1}\otimes\Sigma^{-1})D_{m}\tilde\theta,
\end{align*}
where we have used that $\tr(ABCD)=\vect(D)^\t (A\otimes C^\t)\vect(B^\t)$, for $A$, $B$, $C$ and $D$ matrices such that the product $ABCD$ is defined and square~\cite[Lemma~3]{magnus1986}. We can thus conclude that the Fisher matrix is 
\begin{equation}\label{eq:fisher-matrix-wishart}
	G(\xi)=\frac{n}{2} D_m^\t \left(\Sigma^{-1}\otimes\Sigma^{-1}\right)D_m.
\end{equation}

This metric turns out to coincide with the Fisher metric of the statistical manifold formed by multivariate Gaussian distributions with fixed mean~\cite{burbea1986,nielsen2023,pinele2020,skovgaard1984}, up to the factor~$n$. Therefore, the Fisher--Rao distance is proportional to the one of that manifold, that is,
\begin{equation} \label{eq:distance-wishart}
	d_{\FR}\big( \Sigma_1,\Sigma_2 \big)
	= \sqrt{\frac{n}{2}} \left\| \log \left( \Sigma_1^{-1/2}\Sigma_2\Sigma_1^{-1/2} \right) \right\|_F
	= \sqrt{\frac{n}{2}\sum_{k=1}^{m} \left(\log \lambda_k \right)^2},
\end{equation}
where $\lambda_k$ are the eigenvalues of $\Sigma_1^{-1/2}\Sigma_2\Sigma_1^{-1/2}$, $\log$ denotes the matrix logarithm, and $\|A\|_F = \sqrt{\tr\left( AA^\t \right)}$ is the Frobenius norm. This metric also coincides with the standard metric of $P_m(\R)$, when endowed with the matrix inner product $\langle A,B \rangle = \tr(A^{\t}B)$~\cite[Chapter~6]{bhatia2007}, up to the factor $\frac{n}{2}$, and is in fact related to the metric of the Siegel upper space~\cite{siegel1943} (see also~\cite[Appendix~D]{nielsen2023}). Note that when $\Sigma$ is restricted to be diagonal this distance is, up to a factor $\sqrt{n}$, the product distance between univariate Gaussian distributions with fixed mean---see Example~\ref{ex:product-gaussian} in Section~\ref{sec:product-distributions}.

\subsection{Inverse Wishart} \label{subsec:inverse-wishart}

An inverse Wishart distribution~\cite{gelman2013} in dimension~$m$, with $n \ge m$ degrees of freedom, $n\in\N$, has p.d.f.
\begin{equation*}
	p(X) = \frac{
		{(\det \Sigma)^{n/2} (\det X )}^{-(n+m+1)/2}\exp\left(-\frac{1}{2}\tr(\Sigma X^{-1})\right)
	}{
		2^{nm/2}\Gamma_m(n/2)
	},
\end{equation*}
defined for $X\in P_m(\R)$, and characterised by $\Sigma \in P_m(\R)$.
We can relate an inverse Wishart distribution to a Wishart distribution by considering the reparametrisation of the sample space given by $Y \coloneqq X^{-1}$ (cf. Proposition~\ref{prop:reparametrisation-sample-space}), and the change of coordinates $\Phi \coloneqq \Sigma^{-1}$. The p.d.f. of the new random variable, in the new coordinates, is 
\begin{align*}
	p(Y) &= \frac{1}{\left| \frac{\d Y}{\d X} \right|} \frac{
		{\left(\det(\Phi^{-1})\right)^{n/2} \left(\det (Y^{-1}) \right)}^{-(n+m+1)/2}\exp\left(-\frac{1}{2}\tr(\Phi^{-1}Y)\right)
	}{
		2^{nm/2} \, \Gamma_m(n/2)
	},\\
	&= \frac{1}{\left(\det(Y^{-1})\right)^{-(m+1)}} \frac{
		{\left(\det (Y^{-1}) \right)}^{-(n+m+1)/2}\exp\left(-\frac{1}{2}\tr(\Phi^{-1}Y)\right)
	}{
		(\det \Phi)^{n/2}2^{nm/2} \, \Gamma_m(n/2)
	}\\
	&=\frac{
		{\left(\det Y \right)}^{(n-m-1)/2}\exp\left(-\frac{1}{2}\tr(\Phi^{-1}Y)\right)
	}{
		(\det \Phi)^{n/2} 2^{nm/2} \, \Gamma_m(n/2)
	},
\end{align*}
where we have used that $\left| \frac{\d Y}{\d X} \right|=\det(X)^{-(m+1)}$~\cite[\S~12]{magnus1986}, which coincides with the p.d.f. of a Wishart distribution with parameter~$\Phi$. Write $\sigma_{i,j}$ and $\phi_{i,j}$ the $(i,j)$-th entries of matrices $\Sigma$ and $\Phi$, respectively. Write $\xi = \left(\xi^1, \dots, \xi^{m(m+1)/2}\right) = (\sigma_{1,1}, \dots \sigma_{1,m}, \sigma_{2,2}, \dots, \sigma_{2,m}, \dots, \sigma_{m,m})$, and $\theta = (\theta^1, \dots, \theta^{m(m+1)/2}) = (\phi_{1,1}, \dots \phi_{1,m}, \phi_{2,2}, \dots, \phi_{2,m}, \dots, \phi_{m,m})$. Denote $G_W(\theta)$ the Fisher matrix of a Wishart distribution~\eqref{eq:fisher-matrix-wishart} in coordinates $\theta$. In the following, $D_m$ is the matrix defined in the previous section, and $A^+$ denotes the Moore-Penrose inverse of a matrix~$A$. Applying Proposition~\ref{prop:reparametrisation-parameter-space} yields
\begin{align*}
	G(\xi) 
	& = \left[\frac{\d\Sigma}{\d\Phi}(\Phi)\right]^{-\t} G_W(\theta) \left[\frac{\d\Sigma}{\d\Phi}(\Phi)\right]^{-1}\\
	& = \frac{n}{2} \left(-D_m^+ \left(\Phi^{-1}\otimes\Phi^{-1}\right) D_m\right)^{-\t} 
	D_m^\t\left(\Phi^{-1}\otimes\Phi^{-1}\right)
	D_m\left(-D_m^+ \left(\Phi^{-1}\otimes\Phi^{-1}\right) D_m\right)^{-1}\\
	& = \frac{n}{2} \left(D_m^+ \left(\Sigma\otimes\Sigma\right)D_m\right)^{-\t}
	D_m^\t\left(\Sigma\otimes\Sigma\right)
	D_m\left(D_m^+ \left(\Sigma\otimes\Sigma\right)D_m\right)^{-1}\\
	& = \frac{n}{2} \left(D_m^+\left(\Sigma^{-1}\otimes\Sigma^{-1}\right)D_m\right)^{\t}
	D_m^\t\left(\Sigma\otimes\Sigma\right)
	D_mD_m^+ \left(\Sigma^{-1}\otimes\Sigma^{-1}\right)D_m\\
	& = \frac{n}{2} D_m^\t\left(\Sigma^{-1}\otimes\Sigma^{-1}\right)
	\left(D_m^+\right)^{\t}
	D_m^\t\left(\Sigma\otimes\Sigma\right)
	D_mD_m^+ \left(\Sigma^{-1}\otimes\Sigma^{-1}\right)D_m\\
	& = \frac{n}{2} D_m^\t\left(\Sigma^{-1}\otimes\Sigma^{-1}\right)D_m,
\end{align*}
where we have used that $\frac{\d\Sigma}{\d\Phi}(\Phi)= -D_m^+\left(\Phi^{-1}\otimes\Phi^{-1}\right)D_m$~\cite[\S~12]{magnus1986}, $\left(D_m^+ (\Sigma\otimes\Sigma)D_m\right)^{-1} = D_m^+ \left(\Sigma^{-1}\otimes\Sigma^{-1}\right) D_m$, and $D_mD_m^+ \left(\Sigma^{-1}\otimes\Sigma^{-1}\right)D_m = \left(\Sigma^{-1}\otimes\Sigma^{-1}\right) D_m$~\cite[Lemma~11]{magnus1986}. Therefore, the Fisher matrix is the same as for Wishart distributions in \S~\eqref{eq:fisher-matrix-wishart}, and the Fisher-Rao distance is
\begin{equation}
	d_{\FR}\big( \Sigma_1,\Sigma_2 \big)
	= \sqrt{\frac{n}{2}} \left\| \log \left( \Sigma_1^{-1/2}\Sigma_2\Sigma_1^{-1/2} \right) \right\|_F
	= \sqrt{\frac{n}{2}\sum_{k=1}^{m} \left(\log \lambda_k \right)^2},
\end{equation}
where $\lambda_k$ are the eigenvalues of $\Sigma_1^{-1/2}\Sigma_2\Sigma_1^{-1/2}$.

\begin{remark}
	Other matrix distributions have been recently studied in the literature.	In~\cite{ayadi2023}, general Wishart elliptical distributions have been addressed, which also include $t$-Wishart and Kotz-Wishart, by noting that their metric coincides with that of zero-mean multivariate elliptical distributions~\cite{berkane1997}.
\end{remark}

\section{Product Distributions} \label{sec:product-distributions}

In this short section, we address the Fisher--Rao distance for multivariate product distributions, i.e., distributions of random vectors whose components are independent. In this case, the distribution of the random vector is the product of the distributions of each component, and the associated statistical manifold is the product of the statistical manifold associated to each component.

Consider $m$ Riemannian manifolds $\{(M_i,g_i) \ \colon \ 1 \le i \le m \}$, and the product manifold $(M,g)$, with $M \coloneqq M_1 \times \cdots \times M_m$, and $g \coloneqq g_1 \oplus \cdots \oplus g_m$. In matrix form, the product metric is given by the block-diagonal matrix
\begin{equation*}
	G = 
	\begin{pmatrix}
		G_1 & 0 & \cdots & 0\\
		0   & G_2 & \cdots & 0\\
		\vdots & \vdots & \ddots & \vdots\\
		0 & 0 & \cdots & G_m
	\end{pmatrix},
\end{equation*}
where $G_i$ is the matrix form of the metric $g_i$, for $1 \le i \le m$. Let $d_i$ denote the geodesic distance in $M_i$. Then, the geodesic distance~$d$ in $(M, g)$ is given by a Pythagorean formula~\cite[Prop.~1]{dandrea2012}, \cite{oller1989}:
\begin{equation}
	d{\left( (x_1, \dots, x_m), (y_1, \dots, y_m) \right)} =  \sqrt{ \sum_{i=1}^{m} \left[d_i(x_i,y_i)\right]^2 },
\end{equation}
where $(x_1, \dots, x_m) \in M_1 \times \cdots \times M_m$, and $(y_1, \dots, y_m) \in M_1 \times \cdots \times M_m$. This result can be used to write explicit forms for the Fisher--Rao distance in statistical manifolds of product distributions, as they are statistical product manifolds. This type of construction has been used, e.g., in \cite{oller1987,villarroya1993}. Some examples are given below.

\begin{example}
	The Fisher--Rao distance between the distributions of $n$-dimensional vectors formed independent Poisson distributions~\cite{burbea1986} (cf.\S~\ref{subsec:poisson}) with parameters $(\lambda_1, \dots, \lambda_n)$ and $(\lambda'_1, \dots, \lambda'_n)$ is
	\begin{equation}
		d_{\FR}\left( (\lambda_1, \dots, \lambda_n), (\lambda'_1, \dots, \lambda'_n) \right)
		= 2 \sqrt{ \sum_{i=1}^{n} \left(\sqrt{\lambda_i} - \sqrt{\lambda'_i}\right)^2 }.
	\end{equation}
\end{example}

\begin{example} \label{ex:product-gaussian}
	Consider multivariate independent Gaussian distributions (cf.~\S~\ref{subsec:gaussian}). In this case, the covariance matrix is diagonal, a case that has been addressed in~\cite{burbea1986,costa2015,pinele2020}. The Fisher--Rao distance between such distributions parametrised by $(\mu_1,\sigma_1, \dots, \mu_n,\sigma_n)$ and $(\mu'_1,\sigma'_1, \dots, \mu'_n,\sigma'_n)$ is
	\begin{align}
		&\hspace{-2em}d_{\FR}\left( (\mu_1, \sigma_1, \dots, \mu_n, \sigma_n), (\mu'_1, \sigma'_1, \dots, \mu'_n, \sigma'_n) \right)\nonumber\\
		&\hspace{2em}=  2\sqrt{2} \sqrt{\sum_{i=1}^{n} \left[\arctanh \left( \sqrt{\frac{(\mu_i-\mu'_i)^2 + 2(\sigma_i-\sigma'_i)^2}{(\mu_i-\mu'_i)^2 + 2(\sigma_i+\sigma'_i)^2}} \right) \right]^2}.
	\end{align}
\end{example}

\begin{example}
	More generally, consider a vector $(X_1,\dots,X_n)$ of $n$-independent generalised Gaussian distributions, where $X_i$ follows a generalised Gaussian with fixed $\beta_i$, parametrised by $(\mu_i,\sigma_i)$. For fixed values $(\beta_1, \dots, \beta_n)$, the distance between the distribution of two such vectors, parametrised by $(\mu_1,\sigma_1, \dots, \mu_n,\sigma_n)$ and $(\mu'_1,\sigma'_1, \dots, \mu'_n,\sigma'_n)$, is
	\begin{align}
		&d_{\FR}\left( (\mu_1, \sigma_1, \dots, \mu_n, \sigma_n), (\mu'_1, \sigma'_1, \dots, \mu'_n, \sigma'_n) \right)\nonumber\\
		&=  \sqrt{
			\sum_{i=1}^{n} (\beta_i+1) \left[\arctanh \left(
			\sqrt{
				\frac{
					\beta_i(\mu_i-\mu'_i)^2\Gamma(2-\frac{1}{\beta_i}) + (\beta_i+1)(\sigma_i-\sigma'_i)^2\Gamma(1+\frac{1}{\beta_i})
				}{
					\beta_i(\mu_i-\mu'_i)^2\Gamma(2-\frac{1}{\beta_i}) + (\beta_i+1)(\sigma'_i+\sigma'_i)^2\Gamma(1+\frac{1}{\beta_i})}
			} 
			\right)\right]^2
		}.
	\end{align}
\end{example}

\section{Final Remarks} \label{sec:conclusion}

In this survey, we have collected closed-form expressions for the Fisher--Rao distance in different statistical manifolds of both discrete and continuous distributions. In curating this collection in a unified language, we also provided some punctual contributions. The results are summarised in Tables~\ref{tab:discrete-distributions}, \ref{tab:continuous-distributions} and \ref{tab:continuous-distributions-2}. We hope that providing these expressions readily available can be helpful not only to those interested in information geometry itself, but also to a broader audience, interested in using these distances in different applications.

\section*{Acknowledgements}

Part of this work was developed while the first author was with IMECC, Unicamp, Brazil. H.K.M. was supported by São Paulo Research Foundation (FAPESP) grant 2021/04516-8. F.C.C.M. was supported by Brazilian National Council for Scientific and Technological Development (CNPq) grant 141407/2020-4. S.I.R.C. was supported by FAPESP grant 2020/09838-0 (BI0S -- Brazilian Institute of Data Science) and CNPq grant 314441/2021-2.

Discussions with Luiz~Lara and Franciele~Silva are gratefully acknowledged. We thank Gabriel~Khan for bringing to our attention the example of inverse Gaussian distributions.

\bibliographystyle{unsrt}
\bibliography{references}

\begin{thebibliography}{10}

\bibitem{amari2000}
Shunichi Amari and Hiroshi Nagaoka.
\newblock {\em Methods of Information Geometry}.
\newblock American Mathematical Society, Providence, RI, USA, 2000.

\bibitem{calin2014}
Ovidiu Calin and Constantin Udri\c{s}te.
\newblock {\em Geometric Modeling in Probability and Statistics}.
\newblock Springer, Cham, Switzerland, 2014.

\bibitem{nielsen2020}
Frank Nielsen.
\newblock An elementary introduction to information geometry.
\newblock {\em Entropy}, 22(10), 2020.

\bibitem{ay2015}
Nihat Ay, Jürgen Jost, Hông~Vân Lê, and Lorenz Schwachhöfer.
\newblock Information geometry and sufficient statistics.
\newblock {\em Probability Theory and Related Fields}, 162:327--364, 2015.

\bibitem{le2017}
Hông~Vân Lê.
\newblock The uniqueness of the {Fisher} metric as information metric.
\newblock {\em Annals of the Institute of Statistical Mathematics},
  69:879--896, 2017.

\bibitem{hotelling1930}
Harold Hotelling.
\newblock Spaces of statistical parameters.
\newblock {\em Bulletin of the American Mathematical Society}, 36:191, 1930.

\bibitem{stigler2007}
Stephen~M. Stigler.
\newblock The epic story of maximum likelihood.
\newblock {\em Statistical Science}, 22(4):598--620, 2007.

\bibitem{rao1945}
C.~R. Rao.
\newblock Information and the accuracy attainable in the estimation of
  statistical parameters.
\newblock {\em Bulletin of the Calcutta Mathematical Society}, 37:81--91, 1945.

\bibitem{rao1992}
C.~R. Rao.
\newblock Information and the accuracy attainable in the estimation of
  statistical parameters.
\newblock In Samuel Kotz and Norman~L. Johnson, editors, {\em Breakthroughs in
  Statistics: Foundations and Basic Theory}, pages 235--247. Springer, New
  York, NY, USA, 1992.

\bibitem{atkinson1981}
Colin Atkinson and Ann F.~S. Mitchell.
\newblock Rao's distance measure.
\newblock {\em Sankhyā: The Indian Journal of Statistics, Series A},
  43(3):345--365, 1981.

\bibitem{ayadi2023}
Imen Ayadi, Florent Bouchard, and Frédéric Pascal.
\newblock Elliptical {Wishart} distribution: Maximum likelihood estimator from
  information geometry.
\newblock In {\em 2023 IEEE International Conference on Acoustics, Speech and
  Signal Processing (ICASSP)}, pages 1--5, 2023.

\bibitem{burbea1986}
Jacob Burbea.
\newblock Informative geometry of probability spaces.
\newblock {\em Expositiones Mathematicae}, 4:347--378, 1986.

\bibitem{calvo1991}
M.~Calvo and J.~M. Oller.
\newblock An explicit solution of information geodesic equations for the
  multivariate model.
\newblock {\em Statistics \& Decisions}, 9(1--2):119--138, 1991.

\bibitem{li2022}
Mingming Li, Huafei Sun, and Linyu Peng.
\newblock {Fisher--Rao} geometry and {Jeffreys} prior for {Pareto}
  distribution.
\newblock {\em Communications in Statistics---Theory and Methods},
  51(6):1895--1910, 2022.

\bibitem{micchelli2005}
Charles~A. Micchelli and Lyle Noakes.
\newblock Rao distances.
\newblock {\em Journal of Multivariate Analysis}, 92(1):97--115, 2005.

\bibitem{minarro1993}
Antonio Minarro and Josep~M. Oller.
\newblock On a class of probability density functions and their information
  metric.
\newblock {\em Sankhyā: The Indian Journal of Statistics, Series A},
  55(2):214--225, 1993.

\bibitem{mithcell1988}
Ann F.~S. Mitchell.
\newblock Statistical manifolds of univariate elliptic distributions.
\newblock {\em International Statistical Review}, 56(1):1--16, 1988.

\bibitem{oller1987}
Jose~M. Oller.
\newblock Information metric for extreme value and logistic probability
  distributions.
\newblock {\em Sankhyā: The Indian Journal of Statistics, Series A},
  49(1):17--23, 1987.

\bibitem{oller1985}
Jose~M. Oller and Carles~M. Cuadras.
\newblock Rao's distance for negative multinomial distributions.
\newblock {\em Sankhyā: The Indian Journal of Statistics, Series A},
  47(1):75--83, 1985.

\bibitem{pinele2020}
Julianna Pinele, João~E. Strapasson, and Sueli I.~R. Costa.
\newblock The {Fisher--Rao} distance between multivariate normal distributions:
  Special cases, bounds and applications.
\newblock {\em Entropy}, 22(4), 2020.

\bibitem{rao1987}
C.~R. Rao.
\newblock Differential metrics in probability spaces.
\newblock In S.~Amari, O.~E. Barndorff-Nielsen, R.~E. Kass, S.~L. Lauritzen,
  and C.~R. Rao, editors, {\em Differential Geometry in Statistical Inference},
  volume~10, chapter~1. Institute of Mathematical Statistics, Hayward, CA, USA,
  1987.

\bibitem{verdoolaege2012}
Geert Verdoolaege and Paul Scheunders.
\newblock On the geometry of multivariate generalized {Gaussian} models.
\newblock {\em Journal of Mathematical Imaging and Vision}, 43:180--193, 2012.

\bibitem{villarroya1993}
Angel Villarroya and Josep~M. Oller.
\newblock Statistical tests for the inverse gaussian distribution based on
  {R}ao distance.
\newblock {\em Sankhyā: The Indian Journal of Statistics, Series A},
  55(1):80--103, 1993.

\bibitem{gattone2017}
Stefano~Antonio Gattone, Angela De~Sanctis, Tommaso Russo, and Domitilla
  Pulcini.
\newblock A shape distance based on the fisher--rao metric and its application
  for shapes clustering.
\newblock {\em Physica A: Statistical Mechanics and its Applications},
  487:93--102, 2017.

\bibitem{taylor2019}
Stephen Taylor.
\newblock Clustering financial return distributions using the {Fisher}
  information metric.
\newblock {\em Entropy}, 21(2), 2019.

\bibitem{lebrigant2021-beta}
Alice Le~Brigant, Nicolas Guigui, Sana Rebbah, and Stéphane Puechmorel.
\newblock Classifying histograms of medical data using information geometry of
  beta distributions.
\newblock In {\em 24th International Symposium on Mathematical Theory of
  Networks and Systems MTNS 2020}, volume~54 of {\em IFAC-PapersOnLine}, pages
  514--520, 2021.

\bibitem{rebbah2019}
Sana Rebbah, Florence Nicol, and Stéphane Puechmorel.
\newblock The geometry of the generalized {Gamma} manifold and an application
  to medical imaging.
\newblock {\em Mathematics}, 7(8), 2019.

\bibitem{arvanitidis2022}
Georgios Arvanitidis, Miguel Gonz\'alez-Duque, Alison Pouplin, Dimitrios
  Kalatzis, and Soren Hauberg.
\newblock Pulling back information geometry.
\newblock In Gustau Camps-Valls, Francisco J.~R. Ruiz, and Isabel Valera,
  editors, {\em The 25th International Conference on Artificial Intelligence
  and Statistics}, volume 151 of {\em Proceedings of Machine Learning
  Research}, pages 4872--4894, 2022.

\bibitem{picot2023}
Marine Picot, Francisco Messina, Malik Boudiaf, Fabrice Labeau, Ismail~Ben
  Ayed, and Pablo Piantanida.
\newblock Adversarial robustness via {Fisher--Rao} regularization.
\newblock {\em IEEE Transactions on Pattern Analysis and Machine Intelligence},
  45(3):2698--2710, 2023.

\bibitem{shi-garrier2024}
Loïc Shi-Garrier, Nidhal~Carla Bouaynaya, and Daniel Delahaye.
\newblock Adversarial robustness with partial isometry.
\newblock {\em Entropy}, 26(2), 2024.

\bibitem{gomes2022}
Eduardo Dadalto~Camara Gomes, Florence Alberge, Pierre Duhamel, and Pablo
  Piantanida.
\newblock Igeood: An information geometry approach to out-of-distribution
  detection.
\newblock In {\em International Conference on Learning Representations}, 2022.

\bibitem{miyamoto2023}
Henrique~K. Miyamoto, Fábio C.~C. Meneghetti, and Sueli I.~R. Costa.
\newblock The {Fisher--Rao} loss for learning under label noise.
\newblock {\em Information Geometry}, 6:107–126, 2023.

\bibitem{bouchard2023}
Florent Bouchard, Arnaud Breloy, Antoine Collas, Alexandre Renaux, and
  Guillaume Ginolhac.
\newblock The {Fisher}-{Rao} geometry of {CES} distributions.
\newblock {\em arXiv:2310.01032}, 2023.

\bibitem{arwini2008}
Khadiga~A. Arwini and Christopher T.~J. Dodson.
\newblock {\em Information Geometry: Near Randomness and Near Independence}.
\newblock Springer, Heidelberg, Germany, 2008.

\bibitem{han2014}
Minyeon Han and F.~C. Park.
\newblock {DTI} segmentation and fiber tracking using metrics on multivariate
  normal distributions.
\newblock {\em Journal of Mathematical Imaging and Vision}, 49:317--334, 2014.

\bibitem{lebrigant2022}
Alice Le~Brigant, Jules Deschamps, Antoine Collas, and Nina Miolane.
\newblock Parametric information geometry with the package {Geomstats}.
\newblock {\em ACM Transactions on Mathematical Software}, 49(4):1--26, 2023.

\bibitem{nielsen2023}
Frank Nielsen.
\newblock A simple approximation method for the {Fisher}--{Rao} distance
  between multivariate normal distributions.
\newblock {\em Entropy}, 25(4), 2023.

\bibitem{reverter2003}
F.~Reverter and J.~M. Oller.
\newblock Computing the {Rao} distance for {Gamma} distributions.
\newblock {\em Journal of Computational and Applied Mathematics},
  157(1):155--167, 2003.

\bibitem{lauritzen1987}
Steffen~L. Lauritzen.
\newblock Statistical manifolds.
\newblock In S.~Amari, O.~E. Barndorff-Nielsen, R.~E. Kass, S.~L. Lauritzen,
  and C.~R. Rao, editors, {\em Differential Geometry in Statistical Inference},
  volume~10, chapter~4. Institute of Mathematical Statistics, Hayward, CA, USA,
  1987.

\bibitem{ay2017}
Nihat Ay, Jürgen Jost, Hông~Vân Lê, and Lorenz Schwachhöfer.
\newblock {\em Information Geometry}.
\newblock Springer, Cham, Switzerland, 2017.

\bibitem{klingenber1978}
Wilhelm Klingenberg.
\newblock {\em A Course in Differential Geometry}.
\newblock Springer, New York, NY, USA, 1978.

\bibitem{beardon1983}
Alan~F. Beardon.
\newblock {\em The Geometry of Discrete Groups}.
\newblock Springer, New York, NY, USA, 1983.

\bibitem{cannon1997}
James~W. Cannon, William~J. Floyd, Richard Kenyon, and Walter~R. Parry.
\newblock Hyperbolic geometry.
\newblock In Silvio Levy, editor, {\em Flavors of Geometry}, volume~31 of {\em
  MSRI Publications}. Cambridge University Press, Cambridge, UK; New York, NY,
  USA, 1997.

\bibitem{ratcliffe2006}
John~G. Ratcliffe.
\newblock {\em Foundations of Hyperbolic Manifolds}.
\newblock Springer, New York, NY, USA, 2nd edition, 2006.

\bibitem{costa2015}
Sueli I.~R. Costa, Sandra~A. Santos, and João~E. Strapasson.
\newblock {Fisher} information distance: A geometrical reading.
\newblock {\em Discrete Applied Mathematics}, 197:59--69, 2015.

\bibitem{kass1997}
Robert~E. Kass and Paul~W. Vos.
\newblock {\em Geometrical Foundations of Asymptotic Inference}.
\newblock Wiley, New York, NY, USA, 1997.

\bibitem{tsybakov2009}
Alexandre~B. Tsybakov.
\newblock {\em Introduction to Nonparametric Estimation}.
\newblock Springer, New York, NY, USA, 2009.

\bibitem{khan2022}
Gabriel Khan and Jun Zhang.
\newblock A hall of statistical mirrors.
\newblock {\em Asian Journal of Mathematics}, 26(6):809--846, 2022.

\bibitem{nielsen2020-cauchy}
Frank Nielsen.
\newblock On {Voronoi} diagrams on the information-geometric {Cauchy}
  manifolds.
\newblock {\em Entropy}, 22(7), 2020.

\bibitem{wauters1993}
Dirk Wauters and Lea Vermeire.
\newblock Intensive numerical and symbolic computing in parametric test theory.
\newblock In Wolfgang H{\"a}rdle and L{\'e}opold Simar, editors, {\em Computer
  Intensive Methods in Statistics}, pages 62--72, Heidelberg, Germany, 1993.
  Physica-Verlag.

\bibitem{fang1990}
Kai-Tai Fang, Samuel Kotz, and Kai~Wang Ng.
\newblock {\em Symmetric Multivariate and Related Distributions}.
\newblock Chapman and Hall, London, UK; New York, NY, USA, 1990.

\bibitem{andai2009}
Attila Andai.
\newblock On the geometry of generalized {Gaussian} distributions.
\newblock {\em Journal of Multivariate Analysis}, 100(4):777--793, 2009.

\bibitem{dytso2018}
Alex Dytso, Ronit Bustin, H.~Vincent Poor, and Shlomo Shamai.
\newblock Analytical properties of generalized {Gaussian} distributions.
\newblock {\em Journal of Statistical Distributions and Applications}, 5(6).

\bibitem{berkane1997}
Maia Berkane, Kevin Oden, and Peter~M. Bentler.
\newblock Geodesic estimation in elliptical distributions.
\newblock {\em Journal of Multivariate Analysis}, 63(1):35--46, 1997.

\bibitem{calvo2002}
Miquel Calvo and Josep~M. Oller.
\newblock A distance between elliptical distributions based in an embedding
  into the {Siegel} group.
\newblock {\em Journal of Computational and Applied Mathematics},
  145(2):319--334, 2002.

\bibitem{chen2021}
Xiangbing Chen, Jie Zhou, and Sanfeng Hu.
\newblock Upper bounds for {Rao} distance on the manifold of multivariate
  elliptical distributions.
\newblock {\em Automatica}, 129:109604, 2021.

\bibitem{mitchell1985}
Ann F.~S. Mitchell and Wojtek~J. Krzanowski.
\newblock The {Mahalanobis} distance and elliptic distributions.
\newblock {\em Biometrika}, 72(2):464--467, 1985.

\bibitem{calvo1990}
Miquel Calvo and Josep~M. Oller.
\newblock A distance between multivariate normal distributions based in an
  embedding into the {Siegel} group.
\newblock {\em Journal of Multivariate Analysis}, 35(2):223--242, 1990.

\bibitem{krzanowski1996}
W.~J. Krzanowski.
\newblock {Rao}'s distance between normal populations that have common
  principal components.
\newblock {\em Biometrics}, (4):1467--1471, 1996.

\bibitem{skovgaard1984}
Lene~Theil Skovgaard.
\newblock A {Riemannian} geometry of the multivariate normal model.
\newblock {\em Scandinavian Journal of Statistics}, 11(4):211--223, 1984.

\bibitem{breloy2019}
Arnaud Breloy, Guillaume Ginolhac, Alexandre Renaux, and Florent Bouchard.
\newblock Intrinsic {Cramér}–{Rao} bounds for scatter and shape matrices
  estimation in {CES} distributions.
\newblock {\em IEEE Signal Processing Letters}, 26(2):262--266, 2019.

\bibitem{kotz2000}
Samuel Kotz and Saralees Nadarajah.
\newblock {\em Extreme Value Distributions: Theory and Applications}.
\newblock Imperial College Press, London, UK, 2000.

\bibitem{prescott1980}
P.~Prescott and A.~T. Walden.
\newblock Maximum likelihood estimation of the parameters of the generalized
  extreme-value distribution.
\newblock {\em Biometrika}, 67(3):723--724, 1980.

\bibitem{burbea2002}
J.~Burbea, J.~M. Oller, and F.~Reverter.
\newblock Some remarks on the information geometry of the {Gamma} distribution.
\newblock {\em Communications in Statistics---Theory and Methods},
  31(11):1959--1975, 2002.

\bibitem{lebrigant2021-dirichlet}
Alice Le~Brigant, Stephen~C. Preston, and Stéphane Puechmorel.
\newblock {Fisher--Rao} geometry of {Dirichlet} distributions.
\newblock {\em Differential Geometry and its Applications}, 74:101702, 2021.

\bibitem{gupta2000}
Arjun~K Gupta and Daya~K Nagar.
\newblock {\em Matrix Variate Distributions}.
\newblock Chapman and Hall/CRC, Boca Raton, FL, USA, 2000.

\bibitem{bhatia2007}
Rajendra Bhatia.
\newblock {\em Positive Definite Matrices}.
\newblock Princeton University Press, Princeton, NJ, USA, 2007.

\bibitem{magnus1986}
Jan~R. Magnus and H.~Neudecker.
\newblock Symmetry, 0-1 matrices and {Jacobians}: A review.
\newblock {\em Econometric Theory}, 2(2):157--190, 1986.

\bibitem{siegel1943}
Carl~Ludwig Siegel.
\newblock Symplectic geometry.
\newblock {\em American Journal of Mathematics}, 65(1):1--86, 1943.

\bibitem{gelman2013}
Andrew Gelman, John~B. Carlin, Hal~S. Stern, David~B. Dunson, Aki Vehtari, and
  Donald~B. Rubin.
\newblock {\em Bayesian Data Analysis}.
\newblock Chapman and Hall/CRC, Boca Raton, FL, USA, 2013.

\bibitem{dandrea2012}
Francesco D’Andrea and Pierre Martinetti.
\newblock On {Pythagoras} theorem for products of spectral triples.
\newblock {\em Letters in Mathematical Physics}, 103:469--492, 2012.

\bibitem{oller1989}
Josep~M. Oller.
\newblock Some geometrical aspects of data analysis and statistics.
\newblock In Yadolah Dodge, editor, {\em Statistical Data Analysis and
  Inference}, pages 41--58. North-Holland, Amsterdam, The Netherlands, 1989.

\end{thebibliography}

\end{document}